\numberwithin{equation}{section}
\newcommand{\dell}{\partial}
\newcommand{\ass}{\quad\hbox{as }}
\newcommand{\gradperp}{\nabla^{\perp}}
\newcommand{\R}{\mathbb R}
\newcommand{\nn}{\nabla} 
\newcommand{\ve}{\varepsilon} 
\newcommand{\be}{\begin{equation}} 
	\newcommand{\ee}{\end{equation}}
\newcommand{\bigO}{\mathcal{O}}
\newcommand{\twopartdef}[4]
{
	\left\{
	\begin{array}{ll}
		#1 & \text{if } #2 \\
		#3 & \text{if } #4
	\end{array}
	\right.
}
\DeclareMathOperator{\grad}{\nabla}
\DeclarePairedDelimiter\floor{\lfloor}{\rfloor}
\newtheorem{definition}{Definition}[section]
\newtheorem{theorem}[definition]{Theorem}
\newtheorem{lemma}[definition]{Lemma}
\newtheorem{remark}[definition]{Remark}
\def\bcr{\begin{color}{red}}
	\def\bcb{\begin{color}{blue}}
		\def\ec{\end{color}}
	\date{}
\begin{document}
		
		\title[Vortex-pair solutions for incompressible Euler equations]{Asymptotic properties of  vortex-pair solutions for incompressible Euler equations in $\R^2$}
		
		\author[J. D\'avila]{Juan D\'avila}
		
		\address{J. D\'avila.- Department of Mathematical Sciences, University of Bath, Bath, Ba2 7AY, UK.}
		
		\email{jddb22@bath.ac.uk}%    \thanks will become a 1st page footnote.
		
		\author[M. del Pino]{Manuel del Pino}
		
		\address{M. del Pino.- Department of Mathematical Sciences, University of Bath, Bath, Ba2 7AY, UK.}
		
		\email{mdp59@bath.ac.uk}%    \thanks will become a 1st page footnote.
		
		\author[M.Musso]{Monica Musso}
		
		\address{M. Musso.- Department of Mathematical Sciences, University of Bath, Bath, Ba2 7AY, UK. }
		
		\email{m.musso@bath.ac.uk}
		
		\author[S. Parmeshwar ]{Shrish Parmeshwar }
		
		\address{S. Parmeshwar.- Mathematics Institute,
			University of Warwick,
			Coventry CV4 7AL,
			UK }
		
		\email{Shrish.Parmeshwar@warwick.ac.uk}

		\thanks{}

		\keywords{}

		\maketitle

		\begin{abstract}
			A {\em vortex pair} solution of the incompressible $2d$ Euler equation in vorticity form 
			$$ \omega_t  +  \nn^\perp  \Psi\cdot \nn \omega = 0  , \quad \Psi = (-\Delta)^{-1} \omega, \quad \hbox{in } \R^2 \times (0,\infty)$$
			is a travelling wave solution of the form $\omega(x,t)  =   W(x_1-ct,x_2 )$
			where $W(x)$ is compactly supported and odd in $x_2$. We revisit the problem of constructing solutions which are highly concentrated around points $ (0, \pm q)$, more precisely with approximately radially symmetric, compactly supported bumps with radius $\ve$ and masses $\pm m $.
			%$$W(x_1,x_2) =  \frac m{\ve^2} \big[  U\left (\frac {x- q e_2}{\ve}\right) - U\left (\frac {x+ q e_2}{\ve}\right) \big] .$$
			Fine asymptotic expressions are obtained, and the smooth dependence on the parameters $q$ and $\ve$  for the solution and its propagation speed $c$ are established. These results improve constructions through variational methods in \cite{smets-vanschaftingen} and in \cite{caowei} for the case of a bounded domain. 
			
		\end{abstract}
		
		\section{Introduction}
		
		This paper investigates the Euler equations for an incompressible, inviscid fluid in $\mathbb{R}^2$. In vorticity-stream formulation  these equations take the form 
		%\begin{subequations}\label{2d-euler-vorticity-stream} 
		
		\be \label{2d-euler-vorticity-stream}  \left\{   
		\begin{aligned}
			\omega_{t}+\gradperp\Psi\cdot\nabla\omega&=0\ \ \ \ &&\text{in}\ \mathbb{R}^{2}\times (0,T),\\ \Psi= (-\Delta)^{-1}&\omega\ \  &&\text{in}\ \mathbb{R}^{2}\times (0,T), \\
			\omega(\cdot,0)= &\mathring{\omega}  &&\text{in}\ \mathbb{R}^{2}.
		\end{aligned} \right.  \ee
		The stream function $\Psi$ is determined as the inverse Laplacian applied to $\omega$, yielding:
		$$
		\Psi(x,t)= (-\Delta)^{-1}\omega (x,t)   = \frac 1{2\pi} \int_{\mathbb R^2}\log \frac 1{|x-y|} \omega (y,t)\, dy \ \  
		$$
		and the fluid velocity field  $v =  \nn^\perp \Psi \coloneqq ( \Psi_{x_2}, -\Psi_{x_1})$  follows the Biot-Savart Law:
		$$
		v(x,t) =  \frac 1{2\pi} \int_{\mathbb R^2} \frac {(y-x)^\perp} {|y-x|^2} \omega (y,t)\, dy \ .
		$$
		The Cauchy problem \eqref{2d-euler-vorticity-stream} is well-posed in the weak sense in $L^1(\R^2)\cap L^\infty (\R^2)$ as deduced from the classical theories by Wolibner \cite{Wolibner1933} and Yudovich \cite{Yudovich1963}. Solutions are regular if the initial condition $\mathring{\omega}$ is so. See for instance Chapter 8 in Majda-Bertozzi's book \cite{majdabertozzi}.
		
		\medskip
		Of special interest are solutions with vorticity {\em concentrated} around a finite number of points $\xi_1(t), \ldots, \xi_N(t)$ in the form 
		\be \label{ome}
		\omega_\ve (x,t) =  \sum_{j=1}^N   \frac{m_j} {\ve^2}  \Big [  W\left ( \frac {x-\xi_j (t) } \ve   \right ) + o(1) \Big ] ,
		\ee
		where $W(y)$ is a fixed radially symmetric profile with $\int_{\R^2} W(y) dy = 1$, $o(1) \to 0 $ as $\ve \to 0$ and $m_j \in \mathbb{R}$ are the masses of the vortices. We have that 
		\begin{equation}\label{vortices-to-deltas}
			\omega_\ve(x,t)  \rightharpoonup  \omega_S(x,t) = \sum_{j=1}^N   {m_j} \delta_0  (  {x-\xi_j (t) } ) \ass \ve\to 0 ,
		\end{equation}
		where $\delta_0(y)$ designates a Dirac mass at $0$.
		%Formally we get   \begin{equation}\label{stream-to-deltas}\Psi_\ve(x,t)  \rightharpoonup  \Psi_S(x,t) = (-\Delta)^{-1} \omega_S(x,t) =  \sum_{j=1}^N  \frac{m_j}{2\pi} \log \frac 1 {|x-\xi_j (t)|} \ass \ve\to 0 .\end{equation} 
		%Substituting expressions \eqref{vortices-to-deltas}-\eqref{stream-to-deltas} into \eqref{2d-euler-vorticity-stream}, formally we get 
		%\begin{align*}
		%\partial_t \omega_S + \nabla^\perp \Psi_S \nabla \omega_S &= \sum_{j=1}^N \, m_j \left[ -\dot \xi_j -\sum_{\ell \not= j} {m_\ell \over 2\pi} {(x-\xi_\ell (t) )^\perp \over |x-\xi_\ell (t)|^2}  \right] \cdot \nabla \delta (x-\xi_j (t))  = 0. 
		%\end{align*}
		%??? notacion $S$ arriba o abajo 
		%
		%Since the distribution $\nabla \delta (x-\xi_j)$ is only supported at $x=\xi_j(t)$, we  
		Formally the trajectories $\xi_j(t)$ satisfy the Kirchhoff-Routh  vortex system
		\begin{equation}\label{KR}
			\dot \xi_j (t) = -\sum_{\ell \not= j} \frac{m_\ell}{2\pi} \frac{(\xi_j(t) -\xi_\ell (t) )^\perp}{|\xi_j(t) -\xi_\ell (t)|^2}, \quad j=1, \ldots , N.
		\end{equation}
		The study of point vortex dynamics evolving due to \eqref{KR} is itself a rich topic with many deep works, see \cite{aref} and references within.
		
		\medskip
		Finding regular solutions of \eqref{2d-euler-vorticity-stream} of the form \eqref{ome} that approximate a superposition of point vortices satisfying \eqref{KR} in the sense of \eqref{vortices-to-deltas} is a version of the classical \textit{vortex desingularization problem}, which has been the subject of many works, see for example \cite{MP1993, DDMW2020, caowei, smets-vanschaftingen} and further references inside.
		
		\medskip
		A key tool in finding solutions to the desingularization problem is to build them from the class of {\em steady solutions} to \eqref{2d-euler-vorticity-stream}, namely solutions whose shape does not depend on time. Examples include radially symmetric decaying functions $W(y)$, which correspond to stationary solutions. Other examples are traveling and rotating waves. See for instance \cite{smets-vanschaftingen,caowei}.
		
		\medskip
		One can find such solutions by reducing to semilinear elliptic problems, and a common method of doing so is the stream function method, which starts from the observation that if a $\Psi(x)$ satisfies the equation
		\begin{align}
			\Delta\Psi+f\left(\Psi\right)=0,\label{stream-function-method-equation}
		\end{align}
		for $f$ a $C^{1}$ function, then the stream-function/vorticity pair $\left(\Psi,f\left(\Psi\right)\right)$ is a stationary solution to \eqref{2d-euler-vorticity-stream}. From here, the family of rescaled solutions $\left(\Psi(x/\varepsilon),\varepsilon^{-2}f\left(\Psi(x/\varepsilon)\right)\right)$ is an example of a desingularized vortex solution for a single stationary point vortex.
		
		\medskip
		We are most in interested travelling wave solutions which can be constructed in a similar way; if $\omega(x,t)=\omega_{0}(x-cte_{1})$ for some constant $c$, then the transport equation in \eqref{2d-euler-vorticity-stream} is reduced to 
		\begin{align*}
			\gradperp\left(\Psi-cx_{2}\right)\cdot\grad\omega_{0}=0
		\end{align*}
		in moving coordinate frame $x'=x-cte_{1}$. Thus if we can find a $C^{1}$ functions $f$ and a solution $\Psi$ to
		\begin{align*}
			\Delta\Psi+f\left(\Psi-cx_{2}\right)=0,
		\end{align*}
		we have a travelling wave solution to \eqref{2d-euler-vorticity-stream}.
		
		\medskip
		The aim of this paper is to construct and give certain fine information for a desingularization of the simplest travelling wave solution to the Kirkhhoff-Routh system \eqref{KR}, a vortex pair. Let $q>0$, $M>0$. Then a solution to \eqref{KR} for $N=2$ is given by
		\begin{align}
			\xi_{1}(t)=qe_{2}+cte_{1},\quad \xi_{2}(t)=-qe_{2}+cte_{1},\quad c=\frac{M}{4\pi q},\label{point-vortex-pair-solution}
		\end{align}
		which clearly describes two point vortices a distance $2q$ apart in the $e_{2}$ direction with opposite masses travelling uniformly in the $e_{1}$ direction at speed $c$.
		
		\medskip
		Desingularizing vortex pairs has been a topic of great interest, yielding numerous results in various contexts. Our paper focuses on constructing a desingularized vortex pair solution to \eqref{2d-euler-vorticity-stream}. This solution has previously been established by Smets and Van Schaftingen, in \cite{smets-vanschaftingen}. In their work, they treated the problem using variational techniques, obtaining least energy solutions by minimizing the associated energy functional.
		Cao, Liu, and Wei, in \cite{caowei}, addressed the case of bounded domains through a reduction method. Further results in this direction can be found in \cite{cao,norbury}, and earlier constructions are explored in \cite{burton,turkington} by Burton and Turkington.
		
		\medskip
		Vortex pairs have also been constructed and studied for related systems, see \cite{ao} for a vortex pair solution to the surface quasi-geostrophic equation, and \cite{LiuWei,ChiPach} for travelling wave solutions to the Gross-Pitaevskii equation.
		
		\medskip
		Contrasting with \cite{smets-vanschaftingen}, in this paper we construct the solution instead using Lyapunov-Schmidt reduction, give precise estimates in terms of $\varepsilon$ on lower order terms, and show differentiability of the solution with respect to the parameter $q$.
		
		\medskip
		We now describe the desingularized vortex pair $\left(\Psi_{\varepsilon},\omega_{\varepsilon}\right)$ that solves \eqref{2d-euler-vorticity-stream}. The vorticity takes the form
		\begin{align}
			\omega_{\varepsilon}(x,t)=U_{q,\varepsilon}\left(x-cte_{1}\right),\nonumber
		\end{align}
		where $ U_{q,\varepsilon} = \varepsilon^{-2} f_\varepsilon( \Psi _{\varepsilon}(x) - c_{q,\varepsilon} x_2  ) $ for an appropriate nonlinearity $f_\varepsilon$, with $\Psi_{\varepsilon}$ solving the elliptic problem
		\begin{align}
			-\Delta \Psi_{\varepsilon} (x) =  \varepsilon^{-2} f_\varepsilon( \Psi_{\varepsilon} (x) - c_{q,\varepsilon} x_2  ) .
			\label{stationary-solutions-elliptic-pde} 
		\end{align}
		in the upper half plane $x_2>0$, and is even in $x_1$ and odd in $x_2$. Our stream function $\Psi $ has the form
		\begin{equation} \label{travelling-vortex-pair-ansatz}
			\begin{aligned}
				\Psi_{\varepsilon}(x)&=\mathring{\Psi}_{\varepsilon}(x)+\psi_{\frac{q}{\varepsilon}}\left(\frac{x}{\varepsilon}\right), \quad
				\mathring{\Psi}_{\varepsilon}(x)=\Gamma\left(\frac{x-q e_{2}}{\varepsilon}\right)-\Gamma\left(\frac{x+q e_{2}}{\varepsilon}\right),
			\end{aligned}
		\end{equation}
		where  $\Gamma(x)$ is an entire radial classical solution of the problem
		\begin{align}
			\Delta \Gamma+\Gamma^{\gamma}_{+}=0 \ \text{on}\ \mathbb{R}^{2} , \quad \{ \Gamma>0\} = B_1,
			\label{power-semilinear-problem-R2}
		\end{align}
		$\gamma>3$, $\psi_{q/\varepsilon}$ is of order $\varepsilon^{2}$, and is even in $x_{1}$ and odd in $x_{2}$ so that $\Psi$ exhibits the symmetries we desire. The speed $c_{q,\varepsilon} $ has the expansion
		\begin{align}
			c_{q,\varepsilon}  = \frac M{4\pi q} + O(\ve^2),\quad M\coloneqq\int_{\mathbb{R}^{2}}\Gamma^{\gamma}_{+}dx.\label{vortex-mass-definition}
		\end{align}
		Suppressing the dependency of $c_{q,\varepsilon}$ on $(q,\varepsilon)$ and instead writing $c$, the nonlinearity $f_{\varepsilon}$ in \eqref{stationary-solutions-elliptic-pde} is defined by
		\begin{align}
			f_{\varepsilon}(\Psi(x)-cx_{2})&=\left(\Psi_{\varepsilon}(x)-cx_{2}-\left|\log{\varepsilon}\right|\Omega\right)^{\gamma}_{+}\chi_{B_{s}\left(q e_{2}\right)}(x)\nonumber\\
			&-\left(-\Psi_{\varepsilon}(x)+cx_{2}-\left|\log{\varepsilon}\right|\Omega\right)^{\gamma}_{+}\chi_{B_{s}\left(-q e_{2}\right)}(x).\label{nonlinearity-definition}
		\end{align}
		Here $s>0$ is a fixed constant, and the $\chi$ are indicator functions. To find such a solution requires choosing our $c$ and $\Omega$ as a function of $q$. The quantity $\Omega$ is a parameter to adjust the mass of our vortex pair. 
		We demand that
		\begin{align}
			-\Gamma\left(\frac{2q e_{2}}{\varepsilon}\right)-cq-\left|\log{\varepsilon}\right|\Omega=0.\label{Omega-definition-1}
		\end{align}
		This choice guarantees that
		$$
		M - \varepsilon^{-2}\int_{\mathbb{R}^{2}}\left(\mathring{\Psi}_{\varepsilon}(x)-cx_{2}-\left|\log{\varepsilon}\right|\Omega\right)^{\gamma}_{+}\chi_{B_{s}\left(q e_{2}\right)}(x)\ dx =O(\ve ), \ass \varepsilon \to 0.
		$$
		If we  suppose  $q\in\left(Q_{0}^{-1},Q_{0}\right)$, some $Q_{0}>0$ large and   $c=c(q,\varepsilon)$ has the bound $c\leq mQ_{0}$ for all $\varepsilon$ small enough, then 
		the support of $f_{\varepsilon}(\mathring{\Psi}_{\varepsilon}(x)-cx_{2})$ shrinks as $\varepsilon\to0$:   there exists $\varepsilon_{0}$ small enough such that for all $\varepsilon\leq\varepsilon_{0}$,  
		\begin{align}
			{\mbox {supp}} \, f_{\varepsilon}\left(\mathring{\Psi}_{\varepsilon}(x)-cx_{2}\right) \subseteq B_{\rho_0\varepsilon}(q e_{2})\cup B_{\rho_0\varepsilon}(-q e_{2})\label{rho-0-support}
		\end{align}
		for
		\begin{align}
			\rho_0=2\left(1+\frac{s Q_{0}}{2}\right)e^{Q_{0}^{2}},\label{rho-0-definition}
		\end{align}
		and
		\begin{align}
			\left|\left(\Gamma(y-\frac{q}{\varepsilon} e_{2})\right)^{\gamma}_{+}-\left(\mathring{\Psi}_{\varepsilon}(\varepsilon y)-c\varepsilon y_{2}-\left|\log{\varepsilon}\right|\Omega\right)^{\gamma}_{+}\chi_{B_{\frac{s}{\varepsilon}}\left(\frac{q}{\varepsilon} e_{2}\right)}\right|\leq C\varepsilon\chi_{B_{\rho_0}(\frac{q}{\varepsilon}e_{2})}(y),\label{error-estimate}
		\end{align}
		with an analogous statement holding around $-\frac{q}{\varepsilon} e_{2}$ via symmetry in $y_{2}$.
		
		\medskip
		One can write down the form of $\Gamma$ more explicitly. We have
		\begin{align}
			\Gamma=\twopartdef{\nu(|x|)}{|x|\leq1}{\nu'(1)\log{|x|}}{|x|>1},\label{w-definition}
		\end{align}
		for $\nu$ the unique radial ground state of the problem
		\begin{align}
			\Delta \nu+\nu^{\gamma}=0,\ \nu\in H^{1}_{0}(B_{1}),\ \nu>0\ \text{on}\ B_{1}.\label{basic-power-semilinear-problem}
		\end{align}
		A quantity we refer to repeatedly in our construction is something we call the reduced mass, $M/2\pi$, where $M$ is defined in \eqref{vortex-mass-definition}. From \eqref{w-definition} and the fact that $\nu$ solves \eqref{basic-power-semilinear-problem}, we can see
		\begin{align}
			M=2\pi\left|\nu'(1)\right|, \quad  m\coloneqq\frac{M}{2\pi}=\left|\nu'(1)\right|.\nonumber
		\end{align}
		Rescalings of $\Gamma$ are clearly $C^{1}$ across the boundary of the ball radius $\varepsilon$ and solve
		\begin{align}
			\Delta\Gamma\left(\frac{x}{\varepsilon}\right)+\varepsilon^{-2}\left(\Gamma\left(\frac{x}{\varepsilon}\right)\right)^{\gamma}_{+}=0,\nonumber
		\end{align}
		which is of the form of a desingularized vortex obtained by the stream function method described in \eqref{stream-function-method-equation}. Thus the form of the nonlinearity \eqref{nonlinearity-definition} as well as the choice of $\Omega$ \eqref{Omega-definition-1} also mean that on $B_{\rho_0\varepsilon}(q e_{2})\cup B_{\rho_0\varepsilon}(-q e_{2})$, our vortex pair is a good approximation of these steady state vortices in the moving frame.
		
		\medskip
		Our nonlinearity $f_\varepsilon$ in \eqref{nonlinearity-definition} has the desired regularity due to the fact that our choice of cutoff represented by the indicator functions means that its support will actually be contained in $\chi_{B_{s'}\left(q e_{2}\right)}(x)\cup\chi_{B_{s'}\left(-q e_{2}\right)}(x)$, some $s'<s$, for small enough $\varepsilon$, meaning the nonlinearity is $C^{\floor{\gamma},\gamma-\left[\gamma\right]}$  when $\gamma\notin \mathbb{Z}$ and $C^{\gamma-1}$ otherwise.
		
		\medskip
		Our main theorem can now be stated.
		\begin{theorem}\label{vortexpair-theorem} Let $\gamma > 3$ in \eqref{power-semilinear-problem-R2} and let $q >0.$ For all $\varepsilon >0$ small, there exist  constants $C>0$, $c$ and a function $\psi_{\frac{q}{\varepsilon}} (x)$ even in $x_1$, odd in $x_2$ such that $\Psi$ given by \eqref{travelling-vortex-pair-ansatz} with $\Omega$ as in \eqref{Omega-definition-1} solves  \eqref{stationary-solutions-elliptic-pde}, with $f_\varepsilon$ given by \eqref{nonlinearity-definition}. Set
			$$
			q'=\frac{q}{\varepsilon}.
			$$
			The speed $c$ is given by
			\begin{equation}\nonumber
				c= \frac{m}{2q} + g(q' )\end{equation}
			for a $C^{2}$ function $g$, and the following estimates hold true
			\begin{align}
				\| \psi_{q'} \|_{L^\infty (\mathbb{R}^2)} &+  \| \partial_{q'} \psi_{q'} \|_{L^\infty (\mathbb{R}^2)}+\| \partial_{q'}^{2} \psi_{q'} \|_{L^\infty (\mathbb{R}^2)} \leq C \ve^2,\label{psi-q-l-infinity-bounds} \\
				\varepsilon^{2}|g| &+ \varepsilon|\partial_{q'} g|+|\partial_{q'}^{2} g| \leq C \ve^4.\label{g-pointwise-bounds}
			\end{align}
		\end{theorem}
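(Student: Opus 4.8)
The plan is to obtain $(\psi_{q'},c)$ by a Lyapunov--Schmidt reduction carried out in the blown-up variable $y=x/\ve$, on the half-plane $\{y_2>0\}$, with the symmetries ``even in $y_1$, odd in $y_2$'' imposed at every stage (so that the vortex centred at $q'e_2$ automatically carries an anti-vortex image at $-q'e_2$ and $\Psi_\ve$ vanishes on $\{y_2=0\}$). Writing $\Psi_\ve=\mathring\Psi_\ve+\psi_{q'}(x/\ve)$ and using $\Delta\Gamma=-\Gamma_+^\gamma$, equation \eqref{stationary-solutions-elliptic-pde} becomes, after rescaling,
\[
  \mathcal L_{q'}[\psi_{q'}]\;=\;\mathcal E_{q'}\;+\;\mathcal N_{q'}[\psi_{q'}],
\]
where $\mathcal L_{q'}$ is the linearization of the right-hand side of \eqref{stationary-solutions-elliptic-pde} about $\mathring\Psi_\ve$, which near each core is a small perturbation of the model operator $L_0:=\Delta+\gamma\,\Gamma_+^{\gamma-1}$ recentred at $\pm q'e_2$; $\mathcal N_{q'}$ collects the terms at least quadratic in $\psi_{q'}$, controlled by the fact that $f_\ve$ is of class $C^3$ when $\gamma>3$ (the discussion preceding the theorem); and $\mathcal E_{q'}$ is the error of the approximation $\mathring\Psi_\ve$, which \eqref{rho-0-support}--\eqref{error-estimate} already show to be supported in $B_{\rho_0}(\pm q'e_2)$ and of size $O(\ve)$. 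The choice of $\Omega$ in \eqref{Omega-definition-1} is exactly what cancels the leading constant in the argument of $f_\ve$ at each core; in centred coordinates $z$ near $q'e_2$ this argument equals $\Gamma(z)-\ve(c-\tfrac{m}{2q})z_2+O(\ve^2)$, so that the only part of $\mathcal E_{q'}$ odd in $z_2$ up to order $\ve^2$ is $-\ve(c-\tfrac m{2q})\,\gamma\Gamma_+^{\gamma-1}z_2$, while all other contributions up to order $\ve^2$ are even in $z_2$. This parity is what forces the speed correction to be $O(\ve^2)$ rather than $O(\ve)$.

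First I would develop the linear theory for $\mathcal L_{q'}$ in weighted $L^\infty$ (or H\"older) spaces adapted to the logarithmic growth of $\Gamma$. By translation invariance of \eqref{power-semilinear-problem-R2}, $L_0\partial_{y_1}\Gamma=L_0\partial_{y_2}\Gamma=0$, and the structural input needed is the nondegeneracy of the ground state $\Gamma$: $\{\partial_{y_1}\Gamma,\partial_{y_2}\Gamma\}$ spans its whole bounded kernel. The even-in-$y_1$ symmetry eliminates the $\partial_{y_1}\Gamma$ mode, and the odd extension in $y_2$ ties the $\partial_{y_2}\Gamma$ modes at the two cores into a single relevant cokernel direction $Z_{q'}$. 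I would then prove the $\ve$-uniform solvability statement: for $h$ in the weighted space with $\langle h,Z_{q'}\rangle=0$ there is a unique $(\psi,d)$ with $\mathcal L_{q'}[\psi]=h+d\,Z_{q'}$, $\langle\psi,Z_{q'}\rangle=0$ and $\|\psi\|\le C\|h\|$, uniformly in small $\ve$ and in $q=\ve q'\in(Q_0^{-1},Q_0)$. This is the step I expect to be the main obstacle: one must handle the degeneracy of the coefficient $\gamma\Gamma_+^{\gamma-1}$ across the free boundary $\{\Gamma=0\}$ --- the place where $\gamma>3$, hence enough regularity of $f_\ve$ and of the linearized coefficients, is genuinely used --- together with the algebraically small coupling between the two cores and with the line $\{y_2=0\}$; the indicator cutoffs in \eqref{nonlinearity-definition} cause no difficulty since, as noted in the text, the support lies well inside $B_{s'}(\pm qe_2)$ for $\ve$ small.

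With the linear theory in hand, for each $(c,q',\ve)$ with $c$ in a fixed small neighbourhood of $m/(2q)$ I would solve the \emph{projected} nonlinear problem by the contraction mapping principle: the map $\psi\mapsto\mathcal L_{q'}^{-1}\Pi^{\perp}\big(\mathcal E_{q'}+\mathcal N_{q'}[\psi]\big)$ is a contraction on a ball of radius $C\big(\ve\,|c-\tfrac m{2q}|+\ve^2\big)$ because $\mathcal E_{q'}$ has that size while $\mathcal N_{q'}$ is quadratically small. This yields $\psi_{q'}=\psi_{q'}(c)$ together with a scalar $d=d(c,q',\ve)$; differentiating the fixed-point identity once and twice and invoking the implicit function theorem on the contraction gives $C^2$ dependence of both on the parameters, the point being that the genuine dependence is on $q=\ve q'$, so each $q'$-derivative costs a factor $\ve$. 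Combined with the choice of $c$ below, this gives \eqref{psi-q-l-infinity-bounds}.

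Finally, \eqref{stationary-solutions-elliptic-pde} holds precisely when the Lagrange multiplier vanishes, $d(c,q',\ve)=0$, a single scalar equation for $c$. Pairing the equation with $Z_{q'}$ and using the parity remark of the first paragraph, the order-$\ve^2$ part of $\mathcal E_{q'}$ is orthogonal to $\partial_{z_2}\Gamma$; together with the elementary identity
\[
  \int_{\R^2}\gamma\,\Gamma_+^{\gamma-1}(z)\,z_2\,\partial_{z_2}\Gamma(z)\,dz\;=\;-M\;\neq\;0,
\]
this shows $d=\kappa\,\ve\,(c-\tfrac m{2q})+O(\ve^3)$ for a nonzero constant $\kappa$ proportional to $-M$, plus a quadratically small remainder, so that $d=0$ is solved, by the implicit function theorem (or a short contraction in the scalar variable), by $c=\tfrac m{2q}+g$ with $g=O(\ve^2)$. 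Since the $O(\ve^3)$ remainder and its $q$-derivatives remain $O(\ve^3)$ while $\partial_{q'}=\ve\,\partial_q$, one gets $|\partial_{q'}g|\le C\ve^3$ and $|\partial_{q'}^2g|\le C\ve^4$, which is \eqref{g-pointwise-bounds}. Undoing the rescaling and checking that every step preserves the even-in-$x_1$ / odd-in-$x_2$ symmetry and that $f_\ve$ has the regularity recorded before the statement then completes the proof; the only delicate ingredient is the $\ve$-uniform linear theory of the second step and the free-boundary analysis it absorbs, everything else being a now-standard perturbation argument.
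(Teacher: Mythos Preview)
Your overall scheme---Lyapunov--Schmidt reduction in the rescaled variable, linear theory based on the nondegeneracy of $\Gamma$ (which the paper takes from Dancer--Yan), a contraction for the projected nonlinear problem, then a one-dimensional reduced equation for $c$---matches the paper exactly, and your parity observation (that the $O(\ve^2)$ part of the error near each core is even in $z_2$, hence orthogonal to $\partial_{z_2}\Gamma$, forcing $g=O(\ve^2)$) is precisely the mechanism the paper exploits in Lemma~\ref{orthogonality-condition-error-improvement-lemma}.

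Where your argument is too optimistic is the sentence ``the genuine dependence is on $q=\ve q'$, so each $q'$-derivative costs a factor $\ve$''. Taken literally this is false: the potential in $\mathcal L_{q'}$ is centred at $\pm q'e_2$, and $\partial_{q'}\mathring\Psi_\ve=-Z_2$ has unit size. What is true is that \emph{after recentring} at $q'e_2$ the remaining $q'$-dependence comes from the far field of the image vortex and from $\Omega$, both of which vary through $1/q'=\ve/q$; but then $\partial_{q'}\psi_{q'}$ acquires an additional translation piece $-\partial_{y_2}\psi_{q'}$ that is only the same size as $\psi_{q'}$, not smaller. This is consistent with \eqref{psi-q-l-infinity-bounds} (no $\ve$-gain in $\partial_{q'}\psi_{q'}$) but not with your heuristic, and it explains why \eqref{g-pointwise-bounds} shows an $\ve$-gain per derivative while \eqref{psi-q-l-infinity-bounds} does not. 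More seriously, the remainder $R=O(\ve^3)$ in your expansion of $d$ depends on $q'$ through $\psi_{q'}$ and through $c(q')$, so the claim ``its $q$-derivatives remain $O(\ve^3)$'' is circular until $\partial_{q'}\psi_{q'}$ and $\partial_{q'}c$ are already controlled.

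The paper resolves this coupling not by a single IFT step but by writing the equation for the difference quotient $\vartheta_h=(\psi_{q'+h}-\psi_{q'})/h$, observing that the dangerous term on its right-hand side is $\ve\,\mathfrak c_h\,(y_2-q')(f'_\ve)^+$ (proportional to the unknown $\partial_{q'}c$ times essentially $Z_2$), and then running an iterative bootstrap: first a rough bound $|\ve\mathfrak c_h|\le C(\ve\|\vartheta_h\|_\infty+1)$ and $\|\vartheta_h\|_\infty\le C$, then existence of $(\partial_{q'}\psi_{q'},\partial_{q'}c)$, and finally a four-step improvement (Steps I--IV of Lemma~\ref{bounds-on-q-derivatives-lemma}) alternating between the PDE for $\partial_{q'}\psi_{q'}$ and the differentiated reduced equation, exploiting at each pass the same mode-parity cancellations you identified. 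Your IFT-based route could in principle be made to work, but it would need the recentring argument spelled out and the $\psi$--$c$ coupling broken at fixed $c$ before invoking the scalar implicit function theorem; as written it is a sketch rather than a proof of \eqref{psi-q-l-infinity-bounds}--\eqref{g-pointwise-bounds}.
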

		In addition to the properties stated in Theorem \ref{vortexpair-theorem}, we can also describe to main order the expansion in Fourier modes on $B_{\rho\varepsilon}(q e_{2})\cup B_{\rho\varepsilon}(-q e_{2})$, some constant $\rho>0$ independent of $\varepsilon$, see Remark \ref{mode-remark}.
		
		\medskip
		The structure of the paper is as follows. In Section \ref{vortex-pair-appendix}, we construct the vortex pair. This is done by solving a nonlinear problem for $\psi_{\frac{q}{\varepsilon}}$, using Lyapunov-Schmidt reduction, in a similar strategy to the one laid out in \cite{ao}. First, we will solve a related, projected linear problem. We will then use this linear theory we develop to solve a projected nonlinear problem by a fixed point argument. From there, our solution will be found by solving an algebraic equation involving $c$ and $q$ that will imply the solution to the projected nonlinear problem is actually our desired solution to the full problem. Where there is similarity to the techniques shown in \cite{ao}, some details will be omitted.
		
		\medskip
		In Section \ref{q-linearisation-properties-section}, we will show the existence of the derivatives of $\psi_{\frac{q}{\varepsilon}}$ and $c$ with respect to $q$, as well as bounds \eqref{psi-q-l-infinity-bounds}--\eqref{g-pointwise-bounds}. This is a delicate issue as will see in Section \ref{vortex-pair-appendix} that via the Lyapunov-Schmidt reduction, $\psi_{\frac{q}{\varepsilon}}$ and $c$ solve a coupled system of equations. The key observation is that the semilinear elliptic equation for the difference quotient of $\psi_{\frac{q}{\varepsilon}}$ with respect to $q$ comes with a term on the right hand side that is multiplied by the difference quotient of $c$ with respect to $q$. The structure of this term on the right hand side lets us adapt the linear theory we develop in Section \ref{vortex-pair-appendix} to simultaneously get good bounds on these two difference quotients, allowing us to show existence of the $q$ derivatives, and finally  \eqref{psi-q-l-infinity-bounds}--\eqref{g-pointwise-bounds}.
		
		\section{The Vortex Pair}\label{vortex-pair-appendix}
		\subsection{Nonlinear Problem for the Remainder}
		From now on, throughout the paper, $\gamma>3$, and $q=\varepsilon q'$. We construct a travelling vortex pair solution, for all $\varepsilon>0$ small enough, solving
		\begin{align}
			\Delta \Psi_{\varepsilon}+\varepsilon^{-2}f_{\varepsilon}\left(\Psi_{\varepsilon}-cx_{2}\right)=0\label{vortex-pair-semilinear-equation-appendix}
		\end{align}
		of the form \eqref{travelling-vortex-pair-ansatz} with $\Gamma$ defined in \eqref{w-definition}, and $f_{\varepsilon}$, defined in \eqref{nonlinearity-definition}. Note that $\mathring{\Psi}_{\varepsilon}$ also defined in \eqref{travelling-vortex-pair-ansatz} is odd in $x_{2}$, and even in $x_{1}$. We will find a solution to \eqref{vortex-pair-semilinear-equation-appendix} with $\psi_{q'}$ having the same symmetries as $\mathring{\Psi}_{\varepsilon}$. That is
		\begin{align}
			\psi_{q'}(-x_{1},x_{2})=\psi_{q'}(x_{1},x_{2})=-\psi_{q'}(x_{1},-x_{2}).\label{solution-symmetry-assumptions}
		\end{align}
		Let us first rewrite our problem as a nonlinear problem in $\psi_{q'}$. Substituting \eqref{travelling-vortex-pair-ansatz} in to the left hand side of \eqref{vortex-pair-semilinear-equation-appendix}, we get a solution to our problem of the form \eqref{travelling-vortex-pair-ansatz} if and only if 
		\begin{align}
			S[\psi_{\varepsilon}]&\coloneqq\Delta\left(\Gamma\left(\frac{x-qe_{2}}{\varepsilon}\right)-\Gamma\left(\frac{x+qe_{2}}{\varepsilon}\right)+\psi_{q'}\left(\frac{x}{\varepsilon}\right)\right)\nonumber\\
			&+\varepsilon^{-2}\left(\Gamma\left(\frac{x-qe_{2}}{\varepsilon}\right)-\Gamma\left(\frac{x+qe_{2}}{\varepsilon}\right)+\psi_{q'}\left(\frac{x}{\varepsilon}\right)-cx_{2}-\left|\log{\varepsilon}\right|\Omega\right)^{\gamma}_{+}\chi_{B_{s}\left(qe_{2}\right)}\nonumber\\
			&-\varepsilon^{-2}\left(\Gamma\left(\frac{x+qe_{2}}{\varepsilon}\right)-\Gamma\left(\frac{x-qe_{2}}{\varepsilon}\right)-\psi_{q'}\left(\frac{x}{\varepsilon}\right)+cx_{2}-\left|\log{\varepsilon}\right|\Omega\right)^{\gamma}_{+}\chi_{B_{s}\left(-qe_{2}\right)}\nonumber\\
			&=0.\label{psi-q-fixed-point-problem-formulation-1}
		\end{align}
		Then, letting $x=\varepsilon y$, condition \eqref{psi-q-fixed-point-problem-formulation-1} is equivalent to
		\begin{align}
			\Tilde{S}[\psi_{\varepsilon}]&\coloneqq\Delta\left(\Gamma\left(y-q'e_{2}\right)-\Gamma\left(y+q'e_{2}\right)+\psi_{q'}\left(y\right)\right)\nonumber\\
			&+\left(\Gamma\left(y-q'e_{2}\right)-\Gamma\left(y+q'e_{2}\right)+\psi_{q'}\left(y\right)-c\varepsilon y_{2}-\left|\log{\varepsilon}\right|\Omega\right)^{\gamma}_{+}\chi_{B_{\frac{s}{\varepsilon}}\left(q'e_{2}\right)}\nonumber\\
			&-\left(\Gamma\left(y+q'e_{2}\right)-\Gamma\left(y-q'e_{2}\right)-\psi_{q'}\left(y\right)+c\varepsilon y_{2}-\left|\log{\varepsilon}\right|\Omega\right)^{\gamma}_{+}\chi_{B_{\frac{s}{\varepsilon}}\left(-q'e_{2}\right)}\nonumber\\
			&=0.\nonumber
		\end{align}
		We wish to write the above as something amenable to a fixed point problem in $\psi_{q'}$, and so we will write the nonlinearity as a linearisation around $\Gamma\left(y-q'e_{2}\right)-\Gamma\left(y+q'e_{2}\right)$ plus error terms. To do this we first define some quantities. First we define our potential:
		\begin{align}
			V(y)&=\gamma\left(\Gamma\left(y-q'e_{2}\right)-\Gamma\left(y+q'e_{2}\right)-c\varepsilon y_{2}-\left|\log{\varepsilon}\right|\Omega\right)^{\gamma-1}_{+}\chi_{B_{\frac{s}{\varepsilon}}\left(q'e_{2}\right)}\nonumber\\
			&+\gamma\left(\Gamma\left(y+q'e_{2}\right)-\Gamma\left(y-q'e_{2}\right)+c\varepsilon y_{2}-\left|\log{\varepsilon}\right|\Omega\right)^{\gamma-1}_{+}\chi_{B_{\frac{s}{\varepsilon}}\left(-q'e_{2}\right)},\nonumber\\
			&\coloneqq V^{+}+V^{-}.\nonumber
		\end{align}
		Next we define our error terms. Let
		\begin{align}
			E_{1}(y)&=\left(\Gamma(y-q'e_{2})\right)^{\gamma}_{+}-\left(\Gamma\left(y-q'e_{2}\right)-\Gamma\left(y+q'e_{2}\right)-c\varepsilon y_{2}-\left|\log{\varepsilon}\right|\Omega\right)^{\gamma}_{+}\chi_{B_{\frac{s}{\varepsilon}}\left(q'e_{2}\right)},\label{error-1-definition}\\
			E_{2}(y)&=\left(\Gamma(y+q'e_{2})\right)^{\gamma}_{+}-\left(\Gamma\left(y+q'e_{2}\right)-\Gamma\left(y-q'e_{2}\right)+c\varepsilon y_{2}-\left|\log{\varepsilon}\right|\Omega\right)^{\gamma}_{+}\chi_{B_{\frac{s}{\varepsilon}}\left(-q'e_{2}\right)}\label{error-2-defintion}
		\end{align}
		Finally we define our nonlinearities, depending on $\psi_{q'}$:
		\begin{align}
			N_{1}[\psi_{q'}]&=\left(\Gamma\left(y-q'e_{2}\right)-\Gamma\left(y+q'e_{2}\right)+\psi_{q'}\left(y\right)-c\varepsilon y_{2}-\left|\log{\varepsilon}\right|\Omega\right)^{\gamma}_{+}\chi_{B_{\frac{s}{\varepsilon}}\left(q'e_{2}\right)}\nonumber\\
			&-\left(\Gamma\left(y-q'e_{2}\right)-\Gamma\left(y+q'e_{2}\right)-c\varepsilon y_{2}-\left|\log{\varepsilon}\right|\Omega\right)^{\gamma}_{+}\chi_{B_{\frac{s}{\varepsilon}}\left(q'e_{2}\right)}\nonumber\\
			&-\gamma\left(\left(\Gamma\left(y-q'e_{2}\right)-\Gamma\left(y+q'e_{2}\right)-c\varepsilon y_{2}-\left|\log{\varepsilon}\right|\Omega\right)^{\gamma-1}_{+}\psi_{q'}(y)\right)\chi_{B_{\frac{s}{\varepsilon}}\left(q'e_{2}\right)}\label{nonlinearity-1-definition}\\
			N_{2}[\psi_{q'}]&=\left(\Gamma\left(y+q'e_{2}\right)-\Gamma\left(y-q'e_{2}\right)-\psi_{q'}\left(y\right)+c\varepsilon y_{2}-\left|\log{\varepsilon}\right|\Omega\right)^{\gamma}_{+}\chi_{B_{\frac{s}{\varepsilon}}\left(-q'e_{2}\right)}\nonumber\\
			&-\left(\Gamma\left(y+q'e_{2}\right)-\Gamma\left(y-q'e_{2}\right)+c\varepsilon y_{2}-\left|\log{\varepsilon}\right|\Omega\right)^{\gamma}_{+}\chi_{B_{\frac{s}{\varepsilon}}\left(-q'e_{2}\right)}\nonumber\\
			&+\gamma\left(\left(\Gamma\left(y+q'e_{2}\right)-\Gamma\left(y-q'e_{2}\right)+c\varepsilon y_{2}-\left|\log{\varepsilon}\right|\Omega\right)^{\gamma-1}_{+}\psi_{q'}(y)\right)\chi_{B_{\frac{s}{\varepsilon}}\left(-q'e_{2}\right)}.\label{nonlinearity-2-definition}
		\end{align}
		Then letting
		\begin{align}
			E(y)=E_{1}(y)-E_{2}(y),\label{full-error-definition}\\
			N[\psi_{q'}]=N_{1}[\psi_{q'}]-N_{2}[\psi_{q'}],\label{full-nonlinearity-definition}
		\end{align}
		the condition $\Tilde{S}[\psi_{\varepsilon}]=0$ is equivalent to
		\begin{align}
			L[\psi_{q'}]\coloneqq\Delta\psi_{q'}+V\psi_{q'}=E-N[\psi_{q'}],\label{psi-q-fixed-point-problem-formulation-3}
		\end{align}
		where we have used the fact that
		\begin{align*}
			\Delta\left(\Gamma\left(y-q'e_{2}\right)-\Gamma\left(y+q'e_{2}\right)\right)=-\left(\left(\Gamma\left(y-q'e_{2}\right)\right)^{\gamma}_{+}-\left(\Gamma\left(y+q'e_{2}\right)\right)^{\gamma}_{+}\right).
		\end{align*}
		To finish the first part of this section, we will precisely state the problem we are looking to solve, including the function space we will find a solution in. To do this, we define
		\begin{equation}\label{dq-approximate-kernel-element}
			\begin{aligned}
				Z_{1}(y)=\dell_{1}\Gamma\left(y-q'e_{2}\right)-\dell_{1}\Gamma\left(y+q'e_{2}\right)=Z_{1,1}(y)-Z_{1,2}(y),\\
				Z_{2}(y)=\dell_{2}\Gamma\left(y-q'e_{2}\right)+\dell_{2}\Gamma\left(y+q'e_{2}\right)=Z_{2,1}(y)+Z_{2,2}(y).
			\end{aligned}
		\end{equation}
		We first want to solve a linear projected version of \eqref{psi-q-fixed-point-problem-formulation-3}, where the projection is with respect to $Z_{1}$ and $Z_{2}$.
		
		\medskip
		We notice that both $Z_{1}$ and $Z_{2}$ are odd in $y_{2}$, with $Z_{1}$ odd in $y_{1}$, and $Z_{2}$ even in $y_{1}$. Therefore any solution $\psi$ satisfying the symmetry conditions \eqref{solution-symmetry-assumptions} will automatically have
		\begin{align}
			\int_{\mathbb{R}^{2}}VZ_{1}\psi=\int_{\mathbb{R}^{2}}VZ_{1}Z_{2}=0.\nonumber
		\end{align}
		Thus we need only to project with respect to $Z_{2}$. To that end, we will also demand our solutions satisfy
		\begin{align}
			\int_{\mathbb{R}^{2}}VZ_{2}\psi=0.\label{solution-Z2-orthogonality-condition}
		\end{align}
		Now, let $X$ be the Banach space defined as follows
		\begin{align}
			X\coloneqq\left\{f\in L^{\infty}\left(\mathbb{R}^{2}\right)|f\ \text{satisfies}\ \eqref{solution-symmetry-assumptions}\ \text{and}\ \eqref{solution-Z2-orthogonality-condition}\right\},\label{solution-banach-space}
		\end{align}
		equipped with the standard $L^{\infty}\left(\mathbb{R}^{2}\right)$ norm. Then the problem we are trying to solve can be precisely stated as
		\begin{align}
			L[\psi_{q'}]&=E-N[\psi_{q'}]\ \text{on}\ \mathbb{R}^{2},\nonumber\\
			\psi_{q'}&\in X.\nonumber
		\end{align}
		\subsection{Projected Linear Problem}\label{projected-linear-problem-subsection}
		It will be useful to construct a slightly more general linear theory than is required given the discussed in the previous section. Thus we consider the function space consisting of $\psi\in L^{\infty}\left(\mathbb{R}^{2}\right)$ that satisfy
		\begin{align}
			\psi(y_{1},-y_{2})=-\psi(y_{1},y_{2}),\label{generalised-symmetry-assumptions}
		\end{align}
		satisfying the orthogonality conditions
		\begin{align}
			\int_{\mathbb{R}^{2}}VZ_{i}\psi=0,\ i=1,2.\label{generalised-orthogonality-conditions}
		\end{align}
		To that end, define $Y$ to be the function space
		\begin{align}
			Y\coloneqq\left\{f\in L^{\infty}\left(\mathbb{R}^{2}\right)|f\ \text{satisfies}\ \eqref{generalised-symmetry-assumptions}\ \text{and}\ \eqref{generalised-orthogonality-conditions}\right\}\label{generalised-banach-space}
		\end{align}
		equipped with the standard $L^{\infty}\left(\mathbb{R}^{2}\right)$ norm. Note that $X\subset Y$, with $X$ defined in \eqref{solution-banach-space}.
		
		\medskip
		With these definitions in hand, we first consider the projected linear problem
		\begin{align}
			L[\psi]&=h+aVZ_{1}+bVZ_{2}\ \text{on}\ \mathbb{R}^{2},\nonumber\\
			\psi&\in Y.\label{projected-linear-problem-formulation-1}
		\end{align}
		Here $a,b\in\mathbb{R}$, and $h$ is a function satisfying
		\begin{align}
			\left\|h\right\|_{**}\coloneqq\sup_{y\in\mathbb{R}^{2}}W(y)^{-\left(2+\sigma\right)}\left|h(y)\right|<\infty,\nonumber
		\end{align}
		with $\sigma\in(0,1)$, and
		\begin{align}
			W(y)=\frac{1}{1+\left|y-q'e_{2}\right|}+\frac{1}{1+\left|y+q'e_{2}\right|}.\label{sigma-weight-definition}
		\end{align}
		
		We first have a priori estimates on solutions to \eqref{projected-linear-problem-formulation-1}.
		
		\begin{lemma}\label{estimate-on-coefficient-of-projection-lemma}
			Let $(q,c,\varepsilon)$ be chosen so that \eqref{rho-0-support}--\eqref{rho-0-definition} hold. Suppose we have a solution to \eqref{projected-linear-problem-formulation-1}, with $h$ satisfying $\left\|h\right\|_{**}<\infty$ and the symmetry assumption \eqref{generalised-symmetry-assumptions}. Then
			\begin{align}
				\left(\left|a\right|+\left|b\right|\right)\leq C\left(\varepsilon\left\|\psi\right\|_{L^{\infty}\left(\mathbb{R}^{2}\right)}+\left\|h\right\|_{**}\right),\nonumber
			\end{align}
			some constant $C>0$.
		\end{lemma}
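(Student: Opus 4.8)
\emph{Approach.} The natural idea is to test equation \eqref{projected-linear-problem-formulation-1} against the approximate kernel elements $Z_1$ and $Z_2$, using that $Z_1,Z_2$ are, up to an error of order $\varepsilon$, in the kernel of $L$. Multiplying by $Z_i$ and integrating, I first justify the integration by parts $\int_{\mathbb{R}^2}L[\psi]\,Z_i=\int_{\mathbb{R}^2}\psi\,L[Z_i]$: from the equation $\Delta\psi=h+aVZ_1+bVZ_2-V\psi\in L^\infty_{\mathrm{loc}}$ and elliptic regularity, $\psi\in C^{1}_{\mathrm{loc}}$; since $V$ has compact support and $|h(y)|\le\|h\|_{**}W(y)^{2+\sigma}=O(|y|^{-(2+\sigma)})$, we get $\Delta\psi(y)=O(|y|^{-(2+\sigma)})$ at infinity, so $\psi$ bounded forces $\nabla\psi(y)\to0$ fast enough that, combined with $|Z_i(y)|\le C(1+|y|)^{-1}$ and $|\nabla Z_i(y)|\le C(1+|y|)^{-2}$, the boundary terms on $\partial B_R$ vanish as $R\to\infty$. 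On the right-hand side, the cross term $\int_{\mathbb{R}^2}VZ_1Z_2$ vanishes by the parity in $y_1$ ($Z_1$ odd, $Z_2$ even, $V$ even), so we arrive at the two decoupled scalar identities
\[ a\int_{\mathbb{R}^2}VZ_1^2=\int_{\mathbb{R}^2}\psi\,L[Z_1]-\int_{\mathbb{R}^2}hZ_1,\qquad b\int_{\mathbb{R}^2}VZ_2^2=\int_{\mathbb{R}^2}\psi\,L[Z_2]-\int_{\mathbb{R}^2}hZ_2. \]
The lemma then follows from three estimates: (i) $\int_{\mathbb{R}^2}VZ_i^2\ge c_0>0$ uniformly; (ii) $\int_{\mathbb{R}^2}|L[Z_i]|\le C\varepsilon$; (iii) $\int_{\mathbb{R}^2}W^{2+\sigma}|Z_i|\le C$ uniformly.

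\emph{Lower bound and the $h$--term.} For (i), by \eqref{rho-0-support} $V$ vanishes outside $B_{\rho_0}(q'e_2)\cup B_{\rho_0}(-q'e_2)$, and on each of these balls one has, exactly as in the derivation of \eqref{error-estimate} and using the choice \eqref{Omega-definition-1}, $V=\gamma\Gamma_{+}^{\gamma-1}(y\mp q'e_2)+O(\varepsilon)$ while $Z_i=\pm\partial_i\Gamma(y\mp q'e_2)+O(\varepsilon/q)$ there (the contribution from the far center being $O(\varepsilon/q)$ because $\nabla\Gamma$ decays like $1/q'$). Hence $\int_{\mathbb{R}^2}VZ_i^2=2\gamma\int_{\mathbb{R}^2}\Gamma_{+}^{\gamma-1}(\partial_i\Gamma)^2\,dy+O(\varepsilon)$, and the leading term is a fixed positive constant, finite because $\Gamma_{+}^{\gamma-1}$ is compactly supported. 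For (iii), from \eqref{w-definition} one checks $|Z_i(y)|\le CW(y)$, and $W^{2+\sigma}$ is bounded with $W^{3+\sigma}\in L^1(\mathbb{R}^2)$ uniformly in $\varepsilon$ (since $3+\sigma>2$ and the two centers are far apart), so $|\int_{\mathbb{R}^2}hZ_i|\le\|h\|_{**}\int_{\mathbb{R}^2}W^{2+\sigma}|Z_i|\le C\|h\|_{**}$.

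\emph{The key estimate (ii).} Write $Z_i=Z_{i,1}\pm Z_{i,2}$ with signs as in \eqref{dq-approximate-kernel-element} and $Z_{i,1}(y)=\partial_i\Gamma(y-q'e_2)$, $Z_{i,2}(y)=\partial_i\Gamma(y+q'e_2)$. Differentiating \eqref{power-semilinear-problem-R2}, $\Delta Z_{i,1}=-\gamma\Gamma_{+}^{\gamma-1}(y-q'e_2)\partial_i\Gamma(y-q'e_2)$, which is supported in $\overline{B_1(q'e_2)}$. Splitting $V=V^++V^-$, on $B_{\rho_0}(q'e_2)$ the elementary inequality $|t_{+}^{\gamma-1}-s_{+}^{\gamma-1}|\le C(|t|+|s|)^{\gamma-2}|t-s|$ (valid as $\gamma-1>2$), together with the bound $\big|\Gamma(y+q'e_2)+c\varepsilon y_2+|\log\varepsilon|\Omega\big|\le C\varepsilon$ that follows from \eqref{Omega-definition-1}, gives $|V^+-\gamma\Gamma_{+}^{\gamma-1}(y-q'e_2)|\le C\varepsilon$ (here $\chi=1$ since $B_{\rho_0}\subset B_{s/\varepsilon}$), hence $|\Delta Z_{i,1}+V^+Z_{i,1}|\le C\varepsilon$ on $B_{\rho_0}(q'e_2)$, while both $\Delta Z_{i,1}$ and (by \eqref{rho-0-support}) $V^+$ vanish outside it. The remaining piece $V^-Z_{i,1}$ is supported in $B_{\rho_0}(-q'e_2)$, where $|Z_{i,1}|=|\partial_i\Gamma(y-q'e_2)|\le C/q'=C\varepsilon/q$ and $V^-$ is bounded; since all these support sets have area $O(1)$, each of the four contributions (from $Z_{i,1}$ and $Z_{i,2}$, near each center) to $\int_{\mathbb{R}^2}|L[Z_i]|$ is $O(\varepsilon)$, proving (ii). Substituting (i)--(iii) into the two identities above yields $c_0|a|\le C\varepsilon\|\psi\|_{L^\infty(\mathbb{R}^2)}+C\|h\|_{**}$ and likewise for $|b|$, which is the claim.

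\emph{Main obstacle.} The crux is step (ii): showing $L[Z_i]$ is $O(\varepsilon)$ in $L^1$. This depends on the precise choice of $\Omega$ in \eqref{Omega-definition-1}, which is exactly what makes the interaction term $O(\varepsilon)$ on the relevant balls, and on the fact that the supports of $\Delta Z_{i,k}$ and of $V^{\pm}$ overlap only where the approximation $\Delta Z_{i,k}+V^{\pm}Z_{i,k}=O(\varepsilon)$ is good, the mismatched (far-center) terms being controlled by the $1/q'$ decay of $\nabla\Gamma$. A secondary point requiring care is the integration by parts, since $\psi$ is only assumed bounded a priori and must first be upgraded via the equation.
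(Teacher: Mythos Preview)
Your proof is correct and follows precisely the standard argument that the paper defers to \cite{ao}: test \eqref{projected-linear-problem-formulation-1} against $Z_i$, use parity to decouple, bound $\int VZ_i^2$ below by a positive constant, bound $\int|L[Z_i]|$ by $C\varepsilon$ via the almost-kernel property, and bound $\int|hZ_i|$ by $C\|h\|_{**}$ via the decay of $W$ and $Z_i$. The paper itself gives no details beyond the reference, but your computations match those used later (e.g.\ \eqref{c-q-relationship-2} and the surrounding bounds), confirming this is the intended route.

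Two minor remarks. First, the elementary inequality you invoke, $|t_+^{\gamma-1}-s_+^{\gamma-1}|\le C(|t|+|s|)^{\gamma-2}|t-s|$, holds already for $\gamma\ge2$; you do not need $\gamma-1>2$ for it. Second, your justification of the integration by parts is slightly informal (``$\nabla\psi\to0$ fast enough''); to make it precise, apply interior gradient estimates on $B_{|y|/2}(y)$ to get $|\nabla\psi(y)|\le C|y|^{-1}$ for large $|y|$, which, combined with $|Z_i|\le C|y|^{-1}$ and $|\nabla Z_i|\le C|y|^{-2}$, makes both boundary terms on $\partial B_R$ of order $R^{-1}$.
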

		\begin{proof}
			See \cite{ao}.
		\end{proof}
		Next is an a priori estimate for the solution of \eqref{projected-linear-problem-formulation-1} in terms of $\left\|h\right\|_{**}$.
		\begin{lemma}\label{psi-a-priori-estimate-lemma}
			Let $(q,c,\varepsilon)$ be chosen so that \eqref{rho-0-support}--\eqref{rho-0-definition} hold.. Suppose we have a solution to \eqref{projected-linear-problem-formulation-1}, with $h$ satisfying $\left\|h\right\|_{**}<\infty$ and the symmetry assumption \eqref{generalised-symmetry-assumptions}. Then
			\begin{align}
				\left\|\psi\right\|_{L^{\infty}\left(\mathbb{R}^{2}\right)}\leq C \left\|h\right\|_{**},\nonumber
			\end{align}
			some constant $C>0$.
		\end{lemma}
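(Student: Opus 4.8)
The natural approach is a contradiction and blow-up argument, in the spirit of \cite{ao}. Suppose the estimate fails: there are $\varepsilon_n\to 0$, admissible parameters $(q_n,c_n)$ in the sense of \eqref{rho-0-support}--\eqref{rho-0-definition}, right-hand sides $h_n$ with $\|h_n\|_{**}\to 0$, and solutions $\psi_n$ of \eqref{projected-linear-problem-formulation-1} with $\|\psi_n\|_{L^\infty(\mathbb{R}^2)}=1$. By Lemma \ref{estimate-on-coefficient-of-projection-lemma}, $|a_n|+|b_n|\le C(\varepsilon_n+\|h_n\|_{**})\to 0$; since $Z_1,Z_2$ are bounded and supported near the two cores, $\|VZ_i\|_{**}\le C$, so the effective right-hand side $\tilde h_n:=h_n+a_nVZ_1+b_nVZ_2$ again satisfies $\|\tilde h_n\|_{**}\to 0$, and $\Delta\psi_n+V\psi_n=\tilde h_n$ with $\psi_n$ odd in $y_2$.

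The first step is the local analysis at each core. After translating by $q_n'e_2$, the bound $|\tilde h_n|\le\|\tilde h_n\|_{**}W^{2+\sigma}$ shows $\tilde h_n(\cdot+q_n'e_2)\to 0$ in $L^\infty_{\mathrm{loc}}$; together with the uniform bound on $V$ and interior elliptic ($W^{2,p}$/Schauder) estimates, $\psi_n(\cdot+q_n'e_2)$ is precompact in $C^1_{\mathrm{loc}}(\mathbb{R}^2)$. Using the definition of $\Omega$ in \eqref{Omega-definition-1}, the fact that $c_n\varepsilon_n\to 0$, and the decay of the far vortex, one checks that $V^+(\cdot+q_n'e_2)\to\gamma\Gamma^{\gamma-1}_+$ and $V^-(\cdot+q_n'e_2)\to 0$ locally uniformly, so a subsequential limit $\psi_\infty$ is a bounded solution of $\Delta\psi_\infty+\gamma\Gamma^{\gamma-1}_+\psi_\infty=0$ on $\mathbb{R}^2$, odd in $y_2$, with (passing \eqref{generalised-orthogonality-conditions} to the limit by dominated convergence) $\int_{\mathbb{R}^2}\gamma\Gamma^{\gamma-1}_+\,\partial_i\Gamma\,\psi_\infty=0$. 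Invoking the nondegeneracy of $\Gamma$ — bounded solutions of the linearized equation are spanned by $\partial_1\Gamma,\partial_2\Gamma$ — oddness in $y_2$ kills the $\partial_1\Gamma$ component, and since $\int\gamma\Gamma^{\gamma-1}_+(\partial_2\Gamma)^2>0$ the orthogonality condition forces the remaining coefficient to vanish. Hence $\psi_\infty\equiv 0$, i.e. $\psi_n\to 0$ in $C^1_{\mathrm{loc}}$ around each core; in particular $\sup_{\partial B_{\rho_0}(\pm q_n'e_2)}|\psi_n|\to 0$.

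It remains to propagate this smallness to all of $\mathbb{R}^2$. On the complement of $B_{\rho_0}(q_n'e_2)\cup B_{\rho_0}(-q_n'e_2)$ one has $V=0$, hence $\Delta\psi_n=\tilde h_n$ there, while $\psi_n=0$ on $\{y_2=0\}$ by oddness and $|\psi_n|=o(1)$ on the two boundary circles. I would then dominate $\psi_n$ on $\{y_2>0\}\setminus B_{\rho_0}(q_n'e_2)$ by a barrier built from the weight $W$ and a bounded harmonic function vanishing on $\{y_2=0\}$, chosen to control both $|\tilde h_n|$ and the boundary data while remaining bounded, so that the maximum principle on this unbounded domain (Phragm\'en--Lindel\"of, using that $\psi_n$ is already known bounded) gives $\|\psi_n\|_{L^\infty}=o(1)$, contradicting $\|\psi_n\|_{L^\infty}=1$. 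I expect this last step to be the main obstacle: because the cores sit at distance $2q_n'=2q_n/\varepsilon_n\to\infty$, the weight $W$ degenerates and the potential generated by a mass-$\mu$ source concentrated near a core has size $\sim\mu\log q_n'$; one must use the orthogonality conditions together with the equation — this is precisely the role of the projection terms $aVZ_1+bVZ_2$ and of Lemma \ref{estimate-on-coefficient-of-projection-lemma} — to see that the source effectively felt near each core carries no logarithmically growing component, which is what makes the constant $C$ independent of $\varepsilon$. This is the part that, following \cite{ao}, completes the argument.
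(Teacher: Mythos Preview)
Your overall strategy---contradiction, blow-up at the cores using nondegeneracy, then a maximum-principle argument on the complement---is exactly the paper's. Two points need correction.

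First, a minor one: after translating by $q_n'e_2$ the oddness of $\psi_n$ in $y_2$ is about the line $y_2=-q_n'\to-\infty$, so the limit $\psi_\infty$ inherits no $y_2$-parity. The $\partial_1\Gamma$ component is instead killed by passing the $i=1$ orthogonality in \eqref{generalised-orthogonality-conditions} to the limit, just as you do for $i=2$.

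Second, and more substantively, your diagnosis of the outer step is off. The orthogonality conditions and Lemma~\ref{estimate-on-coefficient-of-projection-lemma} play no role in controlling the logarithm; they are used only at the cores to force $\psi_\infty=0$. What kills the logarithm is the odd symmetry in $y_2$ together with an explicit splitting of the exterior. On the far region $\{|y-q_n'e_2|\ge 3q_n'\}$ one uses the half-plane representation
\[
\psi_n(y)=\frac{1}{2\pi}\int_{\mathbb R^2_+}\log\frac{|z-\bar y|}{|z-y|}\,\big(\tilde h_n(z)-V_n\psi_n(z)\big)\,dz,
\]
and for such $y$ with $z$ near the core both $|z-y|$ and $|z-\bar y|$ are comparable to $q_n'$, so the kernel is uniformly bounded: the reflected pole $\bar y$ is exactly what cancels the free-space $\log$. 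This gives $|\psi_n|\le CK_n$ there with $K_n\coloneqq\|h_n\|_{**}+\|\psi_n\|_{L^\infty(B_\rho(q_n'e_2))}+|a_n|+|b_n|\to 0$. On the intermediate annulus $2\rho\le|y-q_n'e_2|\le 3q_n'$, where the kernel \emph{is} of size $\log q_n'$, one has $V_n\equiv 0$ so $\Delta\psi_n=\tilde h_n$, and the paper compares directly with the explicit bounded barrier $\beta_n(y)=c_1K_n\big(1-|y-q_n'e_2|^{-\sigma}\big)$, which super-solves and dominates $\psi_n$ on both boundary circles. No mass cancellation is needed; the $\varepsilon$-independence of $C$ comes from the symmetry-induced Green's function on the far piece and a barrier on a \emph{bounded} annulus, not from the projection.
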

		\begin{proof}
			This argument is again very similar to the corresponding a priori estimate in \cite{ao}. We proceed by contradiction and suppose there is a sequence of $\varepsilon_{n}$ such that $\varepsilon_{n}\to0$ as $n\to\infty$. Suppose further that for this sequence, we have a sequence of solutions $\psi_{n}$ to \eqref{projected-linear-problem-formulation-1} such that
			\begin{align}
				\left\|\psi_{n}\right\|_{L^{\infty}\left(\mathbb{R}^{2}\right)}&=1\ \text{for all}\ n,\label{psi-n-all-norm-1}\\
				\left\|h_{n}\right\|_{**}&\to0\ \text{as}\ n\to\infty.\label{log-R-delta-norm-h-goes-to-0}
			\end{align}
			Let $q_{n}=\varepsilon_{n}q'_{n}$ for some sequence $q_{n}\in(Q_{0}^{-1},Q_{0})$ for all $n$. Then as in \cite{ao}, \eqref{psi-n-all-norm-1}--\eqref{log-R-delta-norm-h-goes-to-0} implies
			\begin{align}
				\left(\left\|\psi_{n}\right\|_{L^{\infty}\left(B_{R}(q'_{n}e_{2})\right)}+\left\|\psi_{n}\right\|_{L^{\infty}\left(B_{R}(-q'_{n}e_{2})\right)}\right)\to0\ \text{as}\ n\to\infty,\label{psi-n-do-not-concentrate}
			\end{align}
			for any fixed radius $R>0$, due to the orthogonality conditions \eqref{generalised-orthogonality-conditions} and the non-degeneracy argument for the linearized operator around the power nonlinearity vortex \eqref{power-semilinear-problem-R2} given in \cite{DancerYan}. In particular it is true for $R=2\rho$, where the support of $V_{n}$ in contained in $B_{\rho}\left(q'_{n}e_{2}\right)$ for all $n$.
			
			\medskip
			Next, we obtain bounds for $\psi_{n}(y)$ on the region $|y-q_{n}'e_{2}|\geq3q_{n}'$. Using the Green's function to write,
			\begin{align}
				\psi_{n}(y)&=\frac{1}{2\pi}\int_{\mathbb{R}_{+}^{2}}\log{\left(\frac{\left|z-\bar{y}\right|}{\left|z-y\right|}\right)}\left(h_{n}(z)-V_{n}\psi_{n}+a_{n}V_{n}Z_{1}^{(n)}+b_{n}V_{n}Z_{2}^{(n)}\right)\ dz,\nonumber
			\end{align}
			we have 
			\begin{align}
				\left\|\psi_{n}\right\|_{L^{\infty}\left(\{|y-q_{n}'e_{2}|\geq3q_{n}'\}\right)}\leq C\left(\left\|h_{n}\right\|_{**}+\left\|\psi_{n}\right\|_{L^{\infty}\left(B_{\rho}(q'_{n}e_{2})\right)}+\left|a_{n}\right|+\left|b_{n}\right|\right).\label{psi-n-goes-to-0-15}
			\end{align}
			Now let $G_{n}$ be the annulus defined by $|y-q_{n}'e_{2}|\in[2\rho,3q_{n}']$. We have shown that
			\begin{align}
				\left\|\psi_{n}\right\|_{L^{\infty}\left(\dell G_{n}\right)}\leq CK_{n},\label{psi-n-K-n-boundary-bound}
			\end{align}
			where 
			\begin{align}
				K_{n}=\max\left\{\left\|\psi_{n}\right\|_{L^{\infty}\left(\dell B_{2\rho}(q'_{n}e_{2})\right)},\left\|h_{n}\right\|_{**}+\left\|\psi_{n}\right\|_{L^{\infty}\left(B_{\rho}(q'_{n}e_{2})\right)}+\left|a_{n}\right|+\left|b_{n}\right|\right\}.\label{Kn-definition}
			\end{align}
			On the annulus $G_{n}$,
			\begin{align}
				-\Delta\psi_{n}=h_{n}.\label{psi-n-h-n-annulus-relation}
			\end{align}
			Define $\beta_{n}$ by
			\begin{align}
				\beta_{n}(y)=c_{1}K_{n}\left(1-\left|y-q_{n}'e_{2}\right|^{-\sigma}\right),\nonumber
			\end{align}
			for $c_{1}$ chosen appropriately so that
			\begin{align}
				\beta_{n}(y)\geq\psi_{n}(y)\nonumber
			\end{align}
			on $\dell G_{n}$. Such a choice is possible on this region as
			\begin{align}
				1-\left(2\rho\right)^{-\sigma}\leq1-\left|y-q_{n}'e_{2}\right|^{-\sigma}\leq1-\left(3q_{n}'\right)^{-\sigma}.\nonumber
			\end{align}
			Note that $-\Delta\beta_{n}= c_{1}\sigma^{2}\left|y-q_{n}'e_{2}\right|^{-(2+\sigma)}K_{n}$. We know from \eqref{Kn-definition} that
			\begin{align*}
				K_{n}\geq \left\|h_{n}\right\|_{**}+\left\|\psi_{n}\right\|_{L^{\infty}\left(B_{\rho}(q'_{n}e_{2})\right)}+\left|a_{n}\right|+\left|b_{n}\right|\geq \left\|h_{n}\right\|_{**}.
			\end{align*}
			Therefore
			\begin{align*}
				&\left|y-q_{n}'e_{2}\right|^{-(2+\sigma)}K_{n}\geq \left|y-q_{n}'e_{2}\right|^{-(2+\sigma)}\left\|h_{n}\right\|_{**}\\
				&=\left|y-q_{n}'e_{2}\right|^{-(2+\sigma)}\sup_{z\in\mathbb{R}^{2}_{+}}W(z)^{-\left(2+\sigma\right)}\left|h_{n}(z)\right|,
			\end{align*}
			where we can take the supremum over $z\in\mathbb{R}^{2}_{+}$ due to the symmetry assumption \eqref{generalised-symmetry-assumptions}, and the definition of $W$ in \eqref{sigma-weight-definition}. Now, since $W(z)^{-\left(2+\sigma\right)}\to\infty$ as $\left|z\right|\to\infty$, we have
			\begin{align}
				&\left|y-q_{n}'e_{2}\right|^{-(2+\sigma)}\sup_{z\in\mathbb{R}^{2}_{+}}W(z)^{-\left(2+\sigma\right)}\left|h(z)\right|\nonumber\\
				&\geq C_{0}\left|y-q_{n}'e_{2}\right|^{-(2+\sigma)}\left(\frac{\left(1+\left|y-q_{n}'e_{2}\right|\right)\left(1+\left|y+q_{n}'e_{2}\right|\right)}{2+\left|y-q_{n}'e_{2}\right|+\left|y+q_{n}'e_{2}\right|}\right)^{2+\sigma}\sup_{z\in\mathbb{R}^{2}_{+}}\left|h(z)\right|,\label{psi-n-goes-to-0-20}
			\end{align}
			some constant $C_{0}$. Now as $y\in\mathbb{R}^{2}_{+}$, $\left|y-q_{n}'e_{2}\right|\leq\left|y+q_{n}'e_{2}\right|$, and thus \eqref{psi-n-goes-to-0-20} becomes
			\begin{align*}
				\left|y-q_{n}'e_{2}\right|^{-(2+\sigma)}\sup_{z\in\mathbb{R}^{2}_{+}}W(z)^{-\left(2+\sigma\right)}\left|h(z)\right|&\geq C_{0}\left(1+\frac{1}{\left|y-q_{n}'e_{2}\right|}\right)^{2+\sigma}\left(\frac{1}{2}\right)^{2+\sigma}\sup_{z\in\mathbb{R}^{2}_{+}}\left|h(z)\right|\\
				&\geq C_{1} \sup_{z\in\mathbb{R}^{2}_{+}}\left|h(z)\right|,
			\end{align*}
			as $\left|y-q_{n}'e_{2}\right|\leq 3q_{n}'$. Thus we can say
			\begin{align}
				-\Delta\beta_{n}\geq C_{2}\left\|h_{n}\right\|_{L^{\infty}\left(\mathbb{R}_{+}^{2}\right)}\geq-\Delta\psi_{n},\nonumber
			\end{align}
			for some constant $C_{2}$ large enough, obtained by enlarging $c_{1}$ if necessary. We can also use \eqref{psi-n-K-n-boundary-bound} and \eqref{psi-n-h-n-annulus-relation} in the other direction to obtain
			\begin{align*}
				\beta_{n}(y)&\geq -\psi_{n}(y),\quad y\in \dell G_{n}\\
				-\Delta\beta_{n}&\geq\Delta\psi_{n},\quad y\in G_{n}.
			\end{align*}
			Thus by comparison with $\beta_{n}$ on $G_{n}$, we have
			\begin{align}
				\left\|\psi_{n}\right\|_{L^{\infty}\left(G_{n}\right)}\leq CK_{n}.\label{psi-n-goes-to-0-21}
			\end{align}
			Combining \eqref{psi-n-goes-to-0-21} with \eqref{psi-n-goes-to-0-15} gives us
			\begin{align}
				\left\|\psi_{n}\right\|_{L^{\infty}\left(\{\left|y-q_{n}'e_{2}\right|\geq2\rho\}\right)}\leq CK_{n}.\label{psi-n-goes-to-0-22}
			\end{align}
			Combining \eqref{psi-n-do-not-concentrate} and \eqref{psi-n-goes-to-0-22} gives, as $n\to\infty$, that
			\begin{align}
				\left\|\psi_{n}\right\|_{L^{\infty}\left(\mathbb{R}_{+}^{2}\right)}&\leq\max\left\{\left\|\psi_{n}\right\|_{L^{\infty}\left(B_{2\rho}(q'_{n}e_{2})\right)},\left\|\psi_{n}\right\|_{L^{\infty}\left(\{\left|y-q_{n}'e_{2}\right|\geq2\rho\}\right)}\right\}\nonumber\\
				&\leq\max\left\{\left\|\psi_{n}\right\|_{L^{\infty}\left(B_{2\rho}(q'_{n}e_{2})\right)}, C\left(\left\|h_{n}\right\|_{**}+\left\|\psi_{n}\right\|_{L^{\infty}\left(B_{\rho}(q'_{n}e_{2})\right)}+\left|a_{n}\right|+\left|b_{n}\right|\right)\right\}\to0,\label{psi-n-goes-to-0-23}
			\end{align}
			by the assumptions on $h_{n}$, Lemma \ref{estimate-on-coefficient-of-projection-lemma}, and \eqref{psi-n-do-not-concentrate}. We have that \eqref{psi-n-goes-to-0-23} gives us a contradiction on the $\psi_{n}$, as desired.
		\end{proof}
		We combine Lemmas \ref{estimate-on-coefficient-of-projection-lemma} and \ref{psi-a-priori-estimate-lemma} to obtain the main a priori estimate:
		\begin{lemma}\label{main-a-priori-estimate-lemma}
			Let $(q,c,\varepsilon)$ be chosen so that \eqref{rho-0-support}--\eqref{rho-0-definition} hold.. Suppose we have a solution to \eqref{projected-linear-problem-formulation-1}, with $h$ satisfying $\left\|h\right\|_{**}<\infty$ and the symmetry assumption \eqref{generalised-symmetry-assumptions}. Then
			\begin{align}
				\left\|\psi\right\|_{L^{\infty}\left(\mathbb{R}^{2}\right)}+\left(\left|a\right|+\left|b\right|\right)\leq C \left\|h\right\|_{**},\nonumber
			\end{align}
			some constant $C>0$.
		\end{lemma}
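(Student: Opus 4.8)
The plan is to deduce Lemma \ref{main-a-priori-estimate-lemma} directly by combining the two preceding lemmas, with no new analytic work required. Concretely, suppose $\psi$ solves \eqref{projected-linear-problem-formulation-1} with $\|h\|_{**}<\infty$ and satisfying \eqref{generalised-symmetry-assumptions}. By Lemma \ref{psi-a-priori-estimate-lemma} we already have
\begin{align}
  \|\psi\|_{L^{\infty}(\mathbb{R}^{2})} \le C_{1}\|h\|_{**},\nonumber
\end{align}
and by Lemma \ref{estimate-on-coefficient-of-projection-lemma} we have
\begin{align}
  |a|+|b| \le C_{2}\bigl(\varepsilon\|\psi\|_{L^{\infty}(\mathbb{R}^{2})}+\|h\|_{**}\bigr).\nonumber
\end{align}
Substituting the first bound into the right-hand side of the second gives
\begin{align}
  |a|+|b| \le C_{2}\bigl(\varepsilon C_{1}\|h\|_{**}+\|h\|_{**}\bigr) \le C_{2}(C_{1}+1)\|h\|_{**},\nonumber
\end{align}
using $\varepsilon \le 1$ for $\varepsilon$ small. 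Adding the two displays and setting $C=C_{1}+C_{2}(C_{1}+1)$ yields
\begin{align}
  \|\psi\|_{L^{\infty}(\mathbb{R}^{2})}+|a|+|b| \le C\|h\|_{**},\nonumber
\end{align}
as claimed.

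I should note that the hypotheses of both lemmas are met: Lemma \ref{main-a-priori-estimate-lemma} assumes $(q,c,\varepsilon)$ chosen so that \eqref{rho-0-support}--\eqref{rho-0-definition} hold, $\|h\|_{**}<\infty$, and the symmetry assumption \eqref{generalised-symmetry-assumptions} — which is exactly what Lemmas \ref{estimate-on-coefficient-of-projection-lemma} and \ref{psi-a-priori-estimate-lemma} each require, so both may be applied verbatim. There is no genuine obstacle here: the real content sits in Lemma \ref{psi-a-priori-estimate-lemma} (the blow-up/contradiction argument, the non-degeneracy input from \cite{DancerYan}, and the barrier construction on the annulus $G_n$) and in Lemma \ref{estimate-on-coefficient-of-projection-lemma} (testing the equation against $Z_1,Z_2$, carried over from \cite{ao}). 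The only point worth a word of care is the order of substitution — one must feed the $\psi$-estimate into the coefficient estimate rather than the reverse, so that the $\varepsilon\|\psi\|$ term on the right of Lemma \ref{estimate-on-coefficient-of-projection-lemma} gets absorbed into a clean multiple of $\|h\|_{**}$ without any circularity. This is a two-line algebraic combination, so the proof will simply say "Combine Lemmas \ref{estimate-on-coefficient-of-projection-lemma} and \ref{psi-a-priori-estimate-lemma}."
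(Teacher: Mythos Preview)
Your proposal is correct and matches the paper's approach exactly: the paper itself states the lemma with only the sentence ``We combine Lemmas \ref{estimate-on-coefficient-of-projection-lemma} and \ref{psi-a-priori-estimate-lemma} to obtain the main a priori estimate'' and gives no further proof. Your substitution of the $\psi$-bound from Lemma \ref{psi-a-priori-estimate-lemma} into the coefficient estimate of Lemma \ref{estimate-on-coefficient-of-projection-lemma} is precisely the intended two-line combination.
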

		We finish Section \ref{projected-linear-problem-subsection} by showing existence of solutions to the projected linear problem.
		\begin{theorem}
			Let $(q,c,\varepsilon)$ be chosen so that \eqref{rho-0-support}--\eqref{rho-0-definition} hold.. Then there is a linear operator $T_{q}$ from the space of $h$ satisfying \eqref{generalised-symmetry-assumptions} and $\left\|h\right\|_{**}<\infty$ to $Y$ such that
			$\psi\coloneqq T_{q}(h)$ is the unique solution to \eqref{projected-linear-problem-formulation-1} associated to that $h$. Moreover, we have the bound
			\begin{align}
				\left\|T_{q}(h)\right\|_{L^{\infty}\left(\mathbb{R}^{2}\right)}+\left(\left|a\right|+\left|b\right|\right)\leq C \left\|h\right\|_{**}.\label{existence-of-solution-to-linear-problem-statement}
			\end{align}
			for some $C$ independent of $\varepsilon$.
		\end{theorem}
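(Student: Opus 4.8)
The plan is to prove existence via a Fredholm-type argument, reducing the projected linear problem to a functional-analytic statement to which one can apply standard theory. The a priori bound \eqref{existence-of-solution-to-linear-problem-statement} is already available from Lemma \ref{main-a-priori-estimate-lemma}, so the only thing left is existence and uniqueness; uniqueness is immediate from the a priori estimate applied to the difference of two solutions with the same $h$ (which forces $h=0$ on the right-hand side and hence $\psi = 0$, $a=b=0$). For existence, I would first reformulate \eqref{projected-linear-problem-formulation-1} so that the coefficients $a,b$ are determined by $\psi$: testing the equation against $Z_1$ and $Z_2$ and using that $\int V Z_i Z_j$ is, for $\varepsilon$ small, a nondegenerate $2\times 2$ matrix (close to a diagonal matrix with entries $\int_{\mathbb R^2} |\nabla\Gamma|^2 \gamma \Gamma_+^{\gamma-1}$, coming from the single-vortex problem and the separation $2q' = 2q/\varepsilon \to \infty$), one solves for $a = a[\psi,h]$, $b = b[\psi,h]$ as explicit bounded linear functionals. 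Substituting these back, the problem becomes: find $\psi \in Y$ with $\Delta\psi + V\psi = \tilde h[\psi]$ where the map is affine in $\psi$.

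Next I would set up the inversion of the Laplacian. Write $\psi = (-\Delta)^{-1} g$ using the half-plane Green's function (reflecting oddly in $y_2$, consistent with \eqref{generalised-symmetry-assumptions}), so that the equation reads $\psi = (-\Delta)^{-1}\big( V\psi + h + aVZ_1 + bVZ_2 \big)$. Because $V$ is compactly supported (contained in $B_\rho(q'e_2)\cup B_\rho(-q'e_2)$) and bounded, and because the $\|\cdot\|_{**}$ weight $W^{2+\sigma}$ decays like $|y|^{-(2+\sigma)}$ at infinity with $\sigma\in(0,1)$, the operator $\psi \mapsto (-\Delta)^{-1}(V\psi)$ is compact on the space $Y$ (or on the weighted space $\{\|\psi\|_{**} <\infty\}$): multiplication by $V$ sends a bounded set into a set of uniformly compactly supported $L^\infty$ functions, and convolution against the Green's function gains regularity, so Arzel\`a--Ascoli applies after a diagonal/tightness argument using the decay at infinity. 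The same reasoning handles the $aVZ_1 + bVZ_2$ terms since $a,b$ depend linearly and boundedly on $\psi$. Hence the problem is of the form $(\mathrm{Id} - \mathcal K)\psi = F(h)$ with $\mathcal K$ compact and $F$ bounded linear in $h$.

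Then I would invoke the Fredholm alternative: $\mathrm{Id} - \mathcal K$ is invertible on $Y$ if and only if its kernel is trivial. But any $\psi$ in the kernel solves \eqref{projected-linear-problem-formulation-1} with $h = 0$ (and with the associated $a = b = 0$, since those were defined so the orthogonality conditions hold), so Lemma \ref{main-a-priori-estimate-lemma} forces $\psi = 0$. Therefore $\mathrm{Id} - \mathcal K$ is invertible, which gives the solution operator $T_q$, linear and well-defined; it is unique by the same a priori estimate; and the bound \eqref{existence-of-solution-to-linear-problem-statement} with a constant independent of $\varepsilon$ is exactly Lemma \ref{main-a-priori-estimate-lemma} (the $\varepsilon$-uniformity having been established there through the contradiction/compactness argument). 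One should note $T_q(h)$ genuinely lands in $Y$: the odd-in-$y_2$ symmetry is preserved by the odd reflection in the Green's function together with the symmetry hypothesis on $h$ and on $V, Z_i$, and the orthogonality conditions \eqref{generalised-orthogonality-conditions} hold by the construction of $a,b$.

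The main obstacle I anticipate is the compactness of $\mathcal K$ on a space of functions on all of $\mathbb R^2$ that are merely bounded (not decaying): one must work in the weighted topology induced by $\|\cdot\|_{**}$, or equivalently exploit that the a priori estimates already confine the relevant functions, and be careful that the Green's-function inversion maps $\|\cdot\|_{**}$-bounded right-hand sides back into $L^\infty$ with the correct decay — this is where the restriction $\sigma<1$ (so that $|y|^{-\sigma}$-type barriers are superharmonic-dominating, as used in the annulus argument of Lemma \ref{psi-a-priori-estimate-lemma}) and $\sigma>0$ (so the weight decays) both get used. Since the parallel construction is carried out in \cite{ao}, I would state the Green's-function representation and the compactness claim, refer to \cite{ao} for the routine verification, and emphasize only the points where the vortex-pair geometry (two bumps, separation $2q/\varepsilon$, the reduction of the projection to $Z_2$ alone under the symmetry \eqref{solution-symmetry-assumptions}) differs.
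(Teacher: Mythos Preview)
Your proposal is correct and follows essentially the same approach as the paper: reformulate \eqref{projected-linear-problem-formulation-1} via the half-plane Green's function as $\psi = K\psi + \tilde h$, establish compactness of $K$ from the compact support of $V$ together with the regularity gain of the Green's operator, and apply the Fredholm alternative, with the triviality of the kernel and the final bound both supplied by Lemma~\ref{main-a-priori-estimate-lemma}. The paper is terser---it does not spell out the determination of $a,b$ by testing against $Z_1,Z_2$ nor the weighted-space compactness details you mention---but the skeleton is the same, and the paper likewise defers the routine verifications to \cite{ao}.
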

		\begin{proof}
			We can reformulate the problem \eqref{projected-linear-problem-formulation-1} as
			\begin{align}
				\psi=K\psi+\Tilde{h},\nonumber
			\end{align}
			for an $\Tilde{h}$ depending linearly on $h$, and $K$ is a linear operator acting on $Y$ defined by
			\begin{align}
				(K\psi)(y)=\frac{1}{2\pi}\int_{\mathbb{R}_{+}^{2}}\log{\left(\frac{\left|z-\bar{y}\right|}{\left|z-y\right|}\right)}V(z)\psi(z)\ dz,\nonumber
			\end{align}
			and extending by symmetry to $\mathbb{R}^{2}$. Using methods analogous to those in the proof of Lemma \ref{psi-a-priori-estimate-lemma}, we obtain
			\begin{align}
				\left\|K\psi\right\|_{L^{\infty}\left(\mathbb{R}_{+}^{2}\right)}&\leq C\left|\log{\varepsilon}\right|\left\|\psi\right\|_{L^{\infty}\left(B_{\rho}\left(q'e_{2}\right)\right)}\nonumber\\
				\left\|\nabla K\psi\right\|_{L^{\infty}\left(\mathbb{R}_{+}^{2}\right)}&\leq C\left\|\psi\right\|_{L^{\infty}\left(B_{\rho}\left(q'e_{2}\right)\right)}\nonumber.
			\end{align}
			These bounds show compactness of $K$ for fixed $\varepsilon$, and by the Fredholm alternative, solutions to \eqref{projected-linear-problem-formulation-1} are unique for a particular $h$ as long as $\psi=K\psi$ only has the zero solution in $Y$, guaranteed by the a priori estimate stated in Lemma \ref{main-a-priori-estimate-lemma}. This lemma also gives the bound \eqref{existence-of-solution-to-linear-problem-statement}.
		\end{proof}
		\subsection{Projected Nonlinear Problem}
		We now use the linear theory developed in Section \ref{projected-linear-problem-subsection} to solve a projection of the full nonlinear problem. That is, we seek to construct a solution  to the problem
		\begin{align}
			L[\psi]&=E-N[\psi]+aVZ_{1}+bVZ_{2}\ \text{on}\ \mathbb{R}^{2},\nonumber\\
			\psi&\in Y,\label{projected-nonlinear-problem-formulation}
		\end{align}
		where we recall the definitions of $Y$ from \eqref{generalised-banach-space}, and of $E$ and $N[\psi]$ from \eqref{full-error-definition}--\eqref{full-nonlinearity-definition}.
		\begin{theorem}\label{projected-nonlinear-problem-theorem}
			Let $(q,c,\varepsilon)$ be chosen so that \eqref{rho-0-support}--\eqref{rho-0-definition} hold.. Then for $\varepsilon>0$ small enough, there is a unique solution $\psi_{q'}$ to \eqref{projected-nonlinear-problem-formulation} in the ball
			\begin{align*}
				\mathcal{B}\coloneqq\left\{\psi\in Y:\left\|\psi\right\|_{L^{\infty}\left(\mathbb{R}^{2}\right)}\leq \varepsilon\left|\log{\varepsilon}\right|\right\}.
			\end{align*}
			This solution has the bound
			\begin{align}
				\left\|\psi\right\|_{L^{\infty}\left(\mathbb{R}^{2}\right)}\leq C\varepsilon,\label{project-nonlinear-problem-bound}
			\end{align}
			where $C>0$ is a constant, and moreover the solution is continuous with respect to $q$.
		\end{theorem}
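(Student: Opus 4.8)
The plan is to recast \eqref{projected-nonlinear-problem-formulation} as a fixed-point problem and apply the Banach contraction principle in the ball $\mathcal{B}$, using the linear solution operator $T_q$ furnished by \eqref{existence-of-solution-to-linear-problem-statement}. Set $\mathcal{A}_q(\psi)\coloneqq T_q\big(E-N[\psi]\big)$; then a function $\psi\in Y$ solves \eqref{projected-nonlinear-problem-formulation} (with the coefficients $a,b$ produced by $T_q$) if and only if $\psi=\mathcal{A}_q(\psi)$. By \eqref{existence-of-solution-to-linear-problem-statement}, $\|\mathcal{A}_q(\psi)\|_{L^\infty(\mathbb{R}^2)}\le C\big(\|E\|_{**}+\|N[\psi]\|_{**}\big)$, so the whole argument reduces to bounding $E$ and $N[\psi]$ in the weighted norm $\|\cdot\|_{**}$, together with a Lipschitz bound on $N$.

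For the error, from \eqref{error-estimate} and its mirror image around $-q'e_2$, combined with \eqref{error-1-definition}--\eqref{full-error-definition}, one gets $|E(y)|\le C\varepsilon\big(\chi_{B_{\rho_0}(q'e_2)}(y)+\chi_{B_{\rho_0}(-q'e_2)}(y)\big)$. On this support the weight obeys $W(y)\ge (1+\rho_0)^{-1}>0$, hence $W(y)^{-(2+\sigma)}\le C$ there, and therefore $\|E\|_{**}\le C\varepsilon$. For the nonlinearity, the first point is a support statement: for $\psi\in\mathcal{B}$ each argument of a positive part in \eqref{nonlinearity-1-definition}--\eqref{nonlinearity-2-definition} differs from the corresponding argument appearing in \eqref{rho-0-support} by the term $\psi$, of size $O(\varepsilon|\log\varepsilon|)$, so its positive part — and hence $N[\psi]$ — is supported in $B_\rho(q'e_2)\cup B_\rho(-q'e_2)$ for a fixed $\rho>\rho_0$ and all small $\varepsilon$. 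On that set $W^{-(2+\sigma)}\le C$, so $\|N[\psi]\|_{**}\le C\|N[\psi]\|_{L^\infty}$. Since, by construction, $N_i[\psi]$ is exactly the remainder of the first-order Taylor expansion of the $C^2$ map $t\mapsto(t)_+^\gamma$ (here $\gamma>3$) at the base profile $\Gamma(y-q'e_2)-\Gamma(y+q'e_2)-c\varepsilon y_2-|\log\varepsilon|\Omega$ with increment $\psi$, evaluated on a region where this profile is bounded, a Taylor-with-remainder estimate yields $\|N[\psi]\|_{**}\le C\|\psi\|_{L^\infty(\mathbb{R}^2)}^2$, and the same computation applied to $N[\psi_1]-N[\psi_2]$ gives $\|N[\psi_1]-N[\psi_2]\|_{**}\le C\big(\|\psi_1\|_{L^\infty}+\|\psi_2\|_{L^\infty}\big)\|\psi_1-\psi_2\|_{L^\infty}$.

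With these estimates the proof closes in the usual way. For $\psi\in\mathcal{B}$ we get $\|\mathcal{A}_q(\psi)\|_{L^\infty}\le C(\varepsilon+\varepsilon^2|\log\varepsilon|^2)\le\varepsilon|\log\varepsilon|$ once $\varepsilon$ is small, so $\mathcal{A}_q(\mathcal{B})\subseteq\mathcal{B}$; and $\|\mathcal{A}_q(\psi_1)-\mathcal{A}_q(\psi_2)\|_{L^\infty}\le C\varepsilon|\log\varepsilon|\,\|\psi_1-\psi_2\|_{L^\infty}\le\tfrac12\|\psi_1-\psi_2\|_{L^\infty}$, so $\mathcal{A}_q$ is a contraction on $\mathcal{B}$. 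The Banach fixed-point theorem produces the unique $\psi_{q'}\in\mathcal{B}$ solving \eqref{projected-nonlinear-problem-formulation}, and then $\|\psi_{q'}\|_{L^\infty}=\|\mathcal{A}_q(\psi_{q'})\|_{L^\infty}\le C(\|E\|_{**}+\|N[\psi_{q'}]\|_{**})\le C\varepsilon+C\varepsilon^2|\log\varepsilon|^2\le C\varepsilon$, which is \eqref{project-nonlinear-problem-bound}. For continuity in $q$: the contraction constant is uniform for $q$ in compact subsets of $(0,\infty)$, and $q\mapsto\mathcal{A}_q(\psi)$ is continuous uniformly for $\psi\in\mathcal{B}$ since $V$, $Z_i$, $E$, the weight $W$, and the operator $T_q$ all depend continuously on $q'$; the uniform contraction principle then gives continuity of $q\mapsto\psi_{q'}$. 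I expect the part requiring the most care to be the bookkeeping around the moving cutoffs $\chi_{B_{s/\varepsilon}(\pm q'e_2)}$ — specifically, verifying the support localization of $N[\psi]$ inside $B_\rho(\pm q'e_2)$ and that it is not disturbed by the $O(\varepsilon|\log\varepsilon|)$ perturbation $\psi$ — since it is this localization that makes the passage from the global $L^\infty$ bound on $\psi$ to the weighted $\|\cdot\|_{**}$ bound on $N[\psi]$ legitimate.
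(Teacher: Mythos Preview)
Your proof is correct and follows essentially the same approach as the paper: recast the projected nonlinear problem as a fixed-point equation $\psi=T_q(E-N[\psi])$, use the support localization of $E$ and $N[\psi]$ to pass to the $\|\cdot\|_{**}$ norm, apply the quadratic estimate $\|N[\psi]\|_{**}\le C\|\psi\|_{L^\infty}^2$, and close by Banach's contraction principle on $\mathcal{B}$. Your write-up is in fact somewhat more explicit than the paper's about the role of the weighted norm and the Lipschitz estimate on $N$, and your continuity argument via the uniform contraction principle matches the paper's one-line ``fixed point characterisation'' remark.
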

		
		\begin{proof}
			This is a fixed point argument. Indeed, we can define a map $\mathcal{A}$
			\begin{subequations}
				\begin{align}
					\mathcal{A}:Y&\rightarrow Y,\nonumber\\
					\psi&\mapsto T_{q}\left(-E+N[\psi]\right).\nonumber
				\end{align}
			\end{subequations}
			Then solving \eqref{projected-nonlinear-problem-formulation} is equivalent to solving
			\begin{align}
				\psi=\mathcal{A}(\psi).\nonumber
			\end{align}
			We run the fixed point procedure in the closed ball $\mathcal{B}\subset Y$ defined in Theorem \ref{projected-nonlinear-problem-theorem}.
			
			\medskip
			From \eqref{error-estimate}, \eqref{error-1-definition}, \eqref{error-2-defintion}, and \eqref{full-error-definition}, we can see that
			\begin{align}
				\left\|E\right\|_{L^{\infty}\left(\mathbb{R}^{2}\right)}\leq C_{1}\varepsilon.\nonumber
			\end{align}
			Moreover, for small enough $\varepsilon>0$, by \eqref{nonlinearity-1-definition}, \eqref{nonlinearity-2-definition}, and \eqref{full-nonlinearity-definition}, we have that $N[\psi]$ is supported in $B_{\rho}(q'e_{2})\cup B_{\rho}(-q'e_{2})$ for $\psi\in \mathcal{B}$, some $\rho>0$ independent of $\varepsilon$. This means the support of $N[\psi]$ can be contained inside a compact set uniformly for elements in this closed ball. Given the definition of $N[\psi]$ in \eqref{full-nonlinearity-definition}, we have the estimate
			\begin{align}
				\left\|N[\psi]\right\|_{L^{\infty}\left(\mathbb{R}^{2}\right)}\leq C_{2}\left\|\psi\right\|_{L^{\infty}\left(\mathbb{R}^{2}\right)}^{2}.\nonumber
			\end{align}
			Then we can show that if $\varepsilon$ is small enough, $\mathcal{A}:\mathcal{B}\rightarrow\mathcal{B}$ is a contraction, and admits a unique fixed point. Moreover, we can see that for this fixed point $\psi_{q'}$
			\begin{align}
				\left\|\psi_{q'}\right\|_{L^{\infty}\left(\mathbb{R}^{2}\right)}\leq C\left\|E\right\|_{L^{\infty}\left(\mathbb{R}^{2}\right)}\leq C\varepsilon,\label{error-bound-on-psi-q}
			\end{align}
			giving the bound \eqref{project-nonlinear-problem-bound}. Finally, the fixed point characterisation of the solution $\psi_{q'}$ gives continuity with respect to $q$.
		\end{proof}
		\subsection{Full Solution}
		We wish to solve 
		\begin{align}
			L[\psi_{q'}]&=E-N[\psi_{q'}]\ \text{on}\ \mathbb{R}^{2},\nonumber\\
			\psi_{q'}&\in X.\label{psi-q-fixed-point-problem-formulation-restatement}
		\end{align}
		Note that solving \eqref{projected-nonlinear-problem-formulation} with the condition that $\psi\in X\subset Y$ immediately forces $a=0$. So we are reduced to the problem of finding a relationship between $c$ and $q$ so that 
		\begin{align}
			L[\psi_{q'}]&=E-N[\psi_{q'}]+bVZ_{2},\nonumber\\
			\psi_{q'}&\in X,\label{full-solution-1}
		\end{align}
		has $b=0$.
		\begin{theorem}\label{c-q-relationship-lemma}
			Let $(q,c,\varepsilon)$ be chosen so that \eqref{rho-0-support}--\eqref{rho-0-definition} hold.. Then for $\varepsilon>0$ small enough, there is a unique $c=c(q)$ such that the corresponding solution of \eqref{full-solution-1} has $b=0$. This $c$ is of the form
			\begin{align}
				c=\frac{m}{2q}+g(q'),\label{c-q-relationship}
			\end{align}
			where $g(q')$ is continuous in $q'$.
		\end{theorem}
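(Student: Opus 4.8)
The plan is to complete the Lyapunov--Schmidt reduction begun in \eqref{full-solution-1}. Fix $q$ and $\varepsilon$, and for each admissible speed $c$ (those for which \eqref{rho-0-support}--\eqref{rho-0-definition} hold, a fixed bounded range containing $m/(2q)$) let $\psi_{q',c}\in X$ be the unique solution of \eqref{full-solution-1} supplied by Theorem \ref{projected-nonlinear-problem-theorem}, with $b=b(q,c,\varepsilon)$ the associated multiplier; the goal is to show $c\mapsto b(q,c,\varepsilon)$ has a unique zero of the claimed form. Testing \eqref{full-solution-1} against $Z_2$ and integrating by parts --- legitimate because $L[Z_2]$, $V$, $E$ and $N[\psi_{q',c}]$ all have compact support while $\psi_{q',c}$, $Z_2$, $\nabla Z_2$ decay like $|y|^{-1},|y|^{-1},|y|^{-2}$ at infinity --- yields the scalar identity
$$
b\int_{\R^2}VZ_2^2 \;=\; \int_{\R^2}\psi_{q',c}\,L[Z_2]\;-\;\int_{\R^2}Z_2\,E\;+\;\int_{\R^2}Z_2\,N[\psi_{q',c}].
$$

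The crux is the expansion of $\int_{\R^2}Z_2\,E$. Writing $y=q'e_2+z$ and using \eqref{Omega-definition-1} together with $\Gamma(w)=\nu'(1)\log|w|$ for $|w|>1$, the argument of the nonlinearity defining $E_1$ equals $\Gamma(z)+\big(\tfrac{m}{2q'}-c\varepsilon\big)z_2+\nu(z)$ with $\nu$ independent of $c$ and $|\nu(z)|\le C|z|^2/q'^2$ on the support --- the key point being that the choice of $\Omega$ annihilates the constant term. Since $\gamma>3>2$ one may Taylor-expand $t\mapsto t_+^\gamma$; using $\int_{\R^2}\dell_2(\Gamma_+^{\gamma})z_2\,dz=-M$, the oddness of $\Gamma$ in $z_2$, and that $Z_{2,2}=\dell_2\Gamma(\,\cdot+q'e_2)=-\tfrac{m}{2q'}+O(|z|/q'^2)$ is essentially constant on $\supp E_1$, one obtains
$$
\int_{\R^2}Z_2\,E \;=\; 2M\varepsilon\Big(\frac{m}{2q}-c\Big)\;+\;R_E,\qquad |R_E|+|\dell_c R_E|\le C\varepsilon^2,
$$
with $R_E$ continuous in $(q,c)$. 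The factor $2$ and the non-vanishing of the leading term come from the symmetry $E_2(y_1,y_2)=E_1(y_1,-y_2)$ played against the oddness of $Z_2$ in $y_2$. In the same spirit one checks $\int_{\R^2}VZ_2^2=2\kappa+O(\varepsilon)$ with $\kappa:=\int_{\R^2}\gamma\Gamma_+^{\gamma-1}(\dell_2\Gamma)^2\,dz>0$, and $\|L[Z_2]\|_{L^\infty}\le C\varepsilon$ on its compact support (because $V^{\pm}-\gamma\Gamma_+^{\gamma-1}(\,\cdot\mp q'e_2)$ and $Z_{2,1},Z_{2,2}$ are each $O(\varepsilon)$ on the relevant pieces). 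Hence, by \eqref{project-nonlinear-problem-bound} and $\|N[\psi]\|_{L^\infty}\le C\|\psi\|_{L^\infty}^2$, the two remaining integrals are $O(\varepsilon^2)$; differentiating (or taking difference quotients of) the fixed-point relation $\psi_{q',c}=T_q(-E+N[\psi_{q',c}])$ and using $\|\dell_c E\|_{**}\le C\varepsilon$ gives $\|\dell_c\psi_{q',c}\|_{L^\infty}\le C\varepsilon$, so these integrals are moreover Lipschitz in $c$ with constant $O(\varepsilon^2)$.

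Assembling the pieces, $b(q,c,\varepsilon)=\big(2M\varepsilon\,(c-\tfrac{m}{2q})+\rho(q,c,\varepsilon)\big)\big/\int_{\R^2}VZ_2^2$ with $|\rho|+|\dell_c\rho|\le C\varepsilon^2$, so the equation $b=0$ is equivalent to the fixed-point problem $c=\mathcal F(c):=\tfrac{m}{2q}-\rho(q,c,\varepsilon)/(2M\varepsilon)$. For $\varepsilon$ small, $\mathcal F$ maps $\{\,|c-\tfrac{m}{2q}|\le C_0\varepsilon\,\}$ into itself and has Lipschitz constant $O(\varepsilon)<1$, hence possesses a unique fixed point $c=c(q)$; strict monotonicity of $b$ in $c$ (its $c$-derivative being $\tfrac{M\varepsilon}{\kappa}(1+o(1))>0$) upgrades this to uniqueness over the whole admissible range. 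Setting $g(q'):=c(q)-\tfrac{m}{2q}$ gives \eqref{c-q-relationship}, and $g$ is continuous in $q'$ because all the data ($E$, $V$, $Z_2$, $L[Z_2]$, $N[\,\cdot\,]$, and $\psi_{q',c}$ via Theorem \ref{projected-nonlinear-problem-theorem}) depend continuously on $q$ while the contraction is uniform. The main obstacle is precisely the bookkeeping in the expansion of $\int_{\R^2}Z_2\,E$: isolating the coefficient $2M\varepsilon$ with the correct sign --- so that the point-vortex speed $m/(2q)$ emerges at leading order --- and verifying that every discarded contribution (higher Taylor remainders, the $\nu$-term, the $Z_{2,2}$ cross terms) is genuinely $O(\varepsilon^2)$ with $O(\varepsilon^2)$ dependence on $c$, uniformly in the admissible range; this is exactly what makes the contraction for $\mathcal F$ close.
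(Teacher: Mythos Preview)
Your proof is correct and follows the same route as the paper's: test \eqref{full-solution-1} against $Z_2$, reduce $b=0$ to a scalar identity, and expand $\int_{\R^2}Z_2E$ via the choice of $\Omega$ and a Taylor expansion of $t\mapsto t_+^\gamma$ to extract the leading coefficient $2M\varepsilon\bigl(c-\tfrac{m}{2q}\bigr)$, with all other contributions $O(\varepsilon^2)$. The one substantive addition is that you make the solvability step explicit---tracking $\partial_c$-dependence, setting up a contraction $c\mapsto\mathcal F(c)$, and invoking monotonicity of $b$ in $c$---whereas the paper simply records \eqref{c-q-relationship-11} and asserts existence of a continuous solution; your version therefore actually justifies the word ``unique'' in the statement, which the paper leaves implicit. (Minor cosmetic point: you recycle the symbol $\nu$ for the quadratic remainder in the argument of the nonlinearity, which clashes with the paper's $\nu$ for the ground state in \eqref{basic-power-semilinear-problem}.)
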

		\begin{proof}
			We know that our solution to \eqref{full-solution-1} $\psi_{q'}$ solves
			\begin{align}
				L\left[\psi_{q'}\right]=E-N\left[\psi_{q'}\right]+bVZ_{2}.\nonumber
			\end{align}
			Integrating against $Z_{2}$ gives
			\begin{align}
				b\underbrace{\left(\int_{\mathbb{R}^{2}}VZ_{2}^{2}\right)}_{\geq c_{0}}=\int_{\mathbb{R}^{2}}Z_{2}\left(L\left[\psi_{q'}\right]-E+N\left[\psi_{q'}\right]\right),\label{c-q-relationship-2}
			\end{align}
			where $c_{0}$ is some absolute positive constant. Thus, $b=0$ is equivalent to the right hand of \eqref{c-q-relationship-2} being $0$. We know from the proof of Lemma \ref{estimate-on-coefficient-of-projection-lemma}, and the bound on $\psi_{q'}$, \eqref{error-bound-on-psi-q}, that
			\begin{align}
				\left|\int_{\mathbb{R}^{2}}Z_{2}N\left[\psi_{q'}\right]\right|&\leq C\left\|\psi_{q'}\right\|_{L^{\infty}\left(\mathbb{R}^{2}\right)}^{2}\leq C\varepsilon^{2},\label{c-q-relationship-proof-nonlinearity-bound}\\
				\left|\int_{\mathbb{R}^{2}}Z_{2}L\left[\psi_{q'}\right]\right|&\leq C\varepsilon\left\|\psi_{q'}\right\|_{L^{\infty}\left(\mathbb{R}^{2}\right)}\leq C\varepsilon^{2}.\label{c-q-relationship-proof-linear-operator-bound}
			\end{align}
			We know from the definition of the error $E$ given in \eqref{error-1-definition}--\eqref{error-2-defintion}, and the definition of $Z_{2}$ given in \eqref{dq-approximate-kernel-element}, we have,
			\begin{align}
				\int_{\mathbb{R}^{2}}EZ_{2}&=\int_{B_{\rho}\left(q'e_{2}\right)}E_{1}Z_{2,1}+\int_{B_{\rho}\left(q'e_{2}\right)}E_{1}Z_{2,2}-\int_{B_{\rho}\left(-q'e_{2}\right)}E_{2}Z_{2,1}-\int_{B_{\rho}\left(-q'e_{2}\right)}E_{2}Z_{2,2}\nonumber\\
				&=2\left(\int_{B_{\rho}\left(q'e_{2}\right)}E_{1}Z_{2,1}+\int_{B_{\rho}\left(q'e_{2}\right)}E_{1}Z_{2,2}\right),\nonumber
			\end{align}
			with 
			\begin{align}
				\left|\int_{B_{\rho}\left(q'e_{2}\right)}E_{1}Z_{2,2}\right|\leq C\varepsilon^{2},\label{E1-Z22-first-bound}
			\end{align}
			and,
			\begin{align}
				&\int_{B_{\rho}\left(q'e_{2}\right)}E_{1}Z_{2,1}=-\int_{B_{\rho}\left(q'e_{2}\right)}\dell_{2}\left(\left(\Gamma(y-q'e_{2})\right)^{\gamma}_{+}\right)\left(\Gamma\left(2q'e_{2}\right)-\Gamma\left(y+q'e_{2}\right)+c\varepsilon\left(q'- y_{2}\right)\right)dy\nonumber\\
				&+\bigO{\left(\left|\Gamma\left(2q'e_{2}\right)-\Gamma\left(y+q'e_{2}\right)+c\varepsilon\left(q'- y_{2}\right)\right|^{2}\chi_{B_{\rho}\left(q'e_{2}\right)}\right)}\nonumber\\
				&=-\int_{B_{\rho}\left(q'e_{2}\right)}\dell_{2}\left(\left(\Gamma(y-q'e_{2})\right)^{\gamma}_{+}\right)\left(\Gamma\left(2q'e_{2}\right)-\Gamma\left(y+q'e_{2}\right)+c\varepsilon\left(q'- y_{2}\right)\right)dy+\bigO{\left(\varepsilon^{2}\right)}.\label{c-q-relationship-7}
			\end{align}
			Integrating by parts we have
			\begin{align}
				\int_{B_{\rho}\left(q'e_{2}\right)}E_{1}Z_{2,1}=&\int_{B_{\rho}\left(q'e_{2}\right)}\left(\Gamma(y-q'e_{2})\right)^{\gamma}_{+}\left(-\dell_{2}\Gamma\left(y+q'e_{2}\right)-c\varepsilon\right)dy+\bigO{\left(\varepsilon^{2}\right)}.\nonumber
			\end{align}
			Now we concentrate on calculating $-\dell_{2}\Gamma\left(y+q'e_{2}\right)-c\varepsilon$:
			\begin{align}
				-\dell_{2}\Gamma\left(y+q'e_{2}\right)-c\varepsilon=\frac{m\left(y_{2}+q'\right)}{\left|y+q'e_{2}\right|^{2}}-c\varepsilon=\varepsilon\left(\frac{m}{2q}-c\right)-\frac{m\varepsilon^{2}}{4q^{2}}\left(y_{2}-q'\right)+\bigO{\left(\varepsilon^{3}\right)},\label{c-q-relationship-7a}
			\end{align}
			where we have used the fact that we are on $B_{\rho}\left(q'e_{2}\right)$. We recall that
			\begin{align}
				\int_{B_{\rho}\left(q'e_{2}\right)}\left(\Gamma(y-q'e_{2})\right)^{\gamma}_{+}=\int_{B_{1}}\nu^{\gamma}_{+}=M,\label{c-q-relationship-10}
			\end{align}
			and using \eqref{c-q-relationship-proof-nonlinearity-bound}--\eqref{c-q-relationship-10}, we have
			\begin{align}
				b\left(\int_{\mathbb{R}^{2}}VZ_{2}^{2}\right)=2M\varepsilon\left(c-\frac{m}{2q}+\bigO{\left(\varepsilon\right)}\right),\label{c-q-relationship-11}
			\end{align}
			where $M$ is an absolute constant defined in \eqref{vortex-mass-definition}, and the remainder in the parentheses on the right hand side of order $\bigO{\left(\varepsilon\right)}$ is continuous in $q'$. Thus, there is a solution to $b=0$ of the form
			\begin{align}
				c=\frac{m}{2q}+g(q'),\label{c-q-relationship-12}
			\end{align}
			as required.
		\end{proof}
		We note that for $g$ defined in \eqref{c-q-relationship}, \eqref{c-q-relationship-11} and \eqref{c-q-relationship-12} imply that $g$ has the bound
		\begin{align}
			\left|g(q')\right|\leq C\varepsilon,\label{g-bound} 
		\end{align}
		for some absolute constant $C>0$. We can then use Theorem \ref{c-q-relationship-lemma} to improve the bounds on $E$, and therefore $\psi_{q'}$, and $g$ itself.
		\begin{lemma}\label{orthogonality-condition-error-improvement-lemma}
			Let $\left(q,c(q),\varepsilon\right)$ be as in Theorem \ref{c-q-relationship-lemma}. Then the corresponding solution to \eqref{psi-q-fixed-point-problem-formulation-restatement}, $\psi_{q'}$ has the bound
			\begin{align}
				\left\|\psi_{q'}\right\|_{L^{\infty}\left(\mathbb{R}^{2}\right)}&\leq C\varepsilon^{2},\label{orthogonality-condition-error-improvement-statement}\\
				\left|g(q')\right|&\leq C\varepsilon^{2}.\label{orthogonality-condition-error-improvement-statement-2}
			\end{align}
		\end{lemma}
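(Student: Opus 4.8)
The plan is a two-stage bootstrap. Having fixed $c=\frac{m}{2q}+g(q')$ by Theorem \ref{c-q-relationship-lemma}, together with the crude bound $|g(q')|\le C\varepsilon$ from \eqref{g-bound}, I would feed this information back into the error term $E$, which sharpens the estimate for $\psi_{q'}$ through the fixed point bound \eqref{error-bound-on-psi-q}; a second pass through the scalar identity \eqref{c-q-relationship-2} defining $b$ then sharpens $g$ itself. No new existence statement is needed, since $c(q)$ and $\psi_{q'}$ are already constructed in Theorems \ref{projected-nonlinear-problem-theorem} and \ref{c-q-relationship-lemma}; one only resubstitutes improved bounds into identities these objects already satisfy, with no circularity because each step invokes only estimates established earlier.

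\emph{First}, I would re-examine $E$ on the fixed ball $B_{\rho_0}(q'e_2)$, outside of which $E_1$ vanishes by \eqref{rho-0-support}, and on which $\chi_{B_{s/\varepsilon}(q'e_2)}\equiv 1$ for $\varepsilon$ small. Using \eqref{Omega-definition-1} to remove $|\log\varepsilon|\Omega$, the argument of the shifted power in $E_1$ equals $\Gamma(y-q'e_2)+\delta(y)$ with $\delta(y)=\Gamma(2q'e_2)-\Gamma(y+q'e_2)+c\varepsilon(q'-y_2)$. A Taylor expansion of $\Gamma$ about $2q'e_2$, using $|\nabla\Gamma(x)|=m/|x|$ and $|D^2\Gamma(x)|\le C|x|^{-2}$ for $|x|\ge 1$, gives, with $w=y-q'e_2$,
\[
\delta(y)=\varepsilon w_2\Big(\frac{m}{2q}-c\Big)+O(\varepsilon^2)=-\varepsilon w_2\, g(q')+O(\varepsilon^2)
\]
uniformly on $B_{\rho_0}(q'e_2)$, so $|g|\le C\varepsilon$ forces $\|\delta\|_{L^\infty(B_{\rho_0}(q'e_2))}\le C\varepsilon^2$. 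Since $t\mapsto t^{\gamma}_+$ is locally Lipschitz ($\gamma>3$) and $\Gamma$ is bounded, this yields $\|E_1\|_{L^\infty}\le C\varepsilon^2$, and $\|E_2\|_{L^\infty}\le C\varepsilon^2$ by the $y_2$-symmetry, hence $\|E\|_{L^\infty(\mathbb{R}^2)}\le C\varepsilon^2$. Substituting into \eqref{error-bound-on-psi-q} (using also $\|N[\psi]\|_\infty\le C\|\psi\|_\infty^2$ and the compact support of $N[\psi_{q'}]$) gives $\|\psi_{q'}\|_{L^\infty(\mathbb{R}^2)}\le C\|E\|_{L^\infty}\le C\varepsilon^2$, which is \eqref{orthogonality-condition-error-improvement-statement}.

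\emph{Then}, I would revisit \eqref{c-q-relationship-2}. With the sharpened bound on $\psi_{q'}$, the terms $\int_{\mathbb{R}^2}Z_2 N[\psi_{q'}]$ and $\int_{\mathbb{R}^2}Z_2 L[\psi_{q'}]$ become $O(\varepsilon^4)$ and $O(\varepsilon^3)$ by \eqref{c-q-relationship-proof-nonlinearity-bound}--\eqref{c-q-relationship-proof-linear-operator-bound}, so it remains to expand $\int_{\mathbb{R}^2}EZ_2=2\big(\int_{B_\rho(q'e_2)}E_1Z_{2,1}+\int_{B_\rho(q'e_2)}E_1Z_{2,2}\big)$ to order $\varepsilon^3$. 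Since $Z_{2,2}=\partial_2\Gamma(y+q'e_2)=O(\varepsilon)$ on $B_\rho(q'e_2)$ and $\|E_1\|_\infty\le C\varepsilon^2$, the contribution of $Z_{2,2}$ is $O(\varepsilon^3)$. For the $Z_{2,1}$ term I would repeat the integration by parts of \eqref{c-q-relationship-7}--\eqref{c-q-relationship-7a}, now carrying the expansion of $-\partial_2\Gamma(y+q'e_2)-c\varepsilon$ one more order in $w$:
\[
-\partial_2\Gamma(y+q'e_2)-c\varepsilon=-\varepsilon g(q')-\frac{m\varepsilon^2 w_2}{4q^2}+\frac{m\varepsilon^3}{4q^3}\Big(w_2^2-\tfrac12|w|^2\Big)+O(\varepsilon^4).
\]
Integrating against the radial density $(\Gamma(w))^{\gamma}_+=\nu(|w|)^{\gamma}$ on $B_1$, the $O(\varepsilon^2)$ term drops out because $w_2\nu^{\gamma}$ is odd in $w_2$, and the $O(\varepsilon^3)$ term drops out because $\int_{B_1}w_2^2\nu^{\gamma}=\tfrac12\int_{B_1}|w|^2\nu^{\gamma}$; with $\int_{B_1}\nu^{\gamma}=M$ from \eqref{c-q-relationship-10} and the quadratic remainder of \eqref{c-q-relationship-7} now being $O(\delta^2)=O(\varepsilon^4)$, this gives $\int_{B_\rho(q'e_2)}E_1Z_{2,1}=-M\varepsilon g(q')+O(\varepsilon^3)$, so $\int_{\mathbb{R}^2}EZ_2=-2M\varepsilon g(q')+O(\varepsilon^3)$. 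Feeding this into \eqref{c-q-relationship-2} with $b=0$ gives $2M\varepsilon g(q')+O(\varepsilon^3)=0$, that is, $|g(q')|\le C\varepsilon^2$, which is \eqref{orthogonality-condition-error-improvement-statement-2}.

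The \emph{main obstacle} is this last computation: one has to push the far-field Taylor expansion of $\partial_2\Gamma(y+q'e_2)$ two orders beyond leading order and verify the symmetry cancellations against the radial profile $\nu$ — especially the oddness of $w_2\nu(|w|)^{\gamma}$, which annihilates the otherwise dangerous $O(\varepsilon^2)$ contribution to the $b$-equation — so that the residual there is genuinely of order $\varepsilon^3$. Everything else is a direct reuse of the linear theory of Section \ref{projected-linear-problem-subsection} and the fixed point estimates of Theorem \ref{projected-nonlinear-problem-theorem}.
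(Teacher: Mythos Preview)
Your proposal is correct and follows essentially the same bootstrap as the paper's proof: first use $|g|\le C\varepsilon$ and the identity \eqref{Omega-definition-1} to show $\delta=O(\varepsilon^2)$ on $B_{\rho_0}(q'e_2)$, hence $\|E\|_{L^\infty}\le C\varepsilon^2$ and, via \eqref{error-bound-on-psi-q}, $\|\psi_{q'}\|_{L^\infty}\le C\varepsilon^2$; then feed this back into \eqref{c-q-relationship-2} with $b=0$, using the oddness of $w_2\nu(|w|)^\gamma$ to kill the $O(\varepsilon^2)$ contribution in $\int E_1 Z_{2,1}$ and conclude $|g|\le C\varepsilon^2$. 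The only cosmetic differences are that the paper records the sharper bound $\bigl|\int E_1 Z_{2,2}\bigr|\le C\varepsilon^4$ (via a modal argument) where your cruder $O(\varepsilon^3)$ already suffices, and that your additional observation on the $O(\varepsilon^3)$ term $w_2^2-\tfrac12|w|^2$ vanishing against $\nu^\gamma$ is not needed for \eqref{orthogonality-condition-error-improvement-statement-2}.
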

		\begin{proof}
			As in \eqref{c-q-relationship-7}, one can see that
			\begin{align}
				E_{1}=-\gamma\left(\Gamma(y-q'e_{2})\right)^{\gamma-1}_{+}\left(\Gamma\left(2q'e_{2}\right)-\Gamma\left(y+q'e_{2}\right)+c\varepsilon\left(q'- y_{2}\right)\right)\nonumber\\
				+\bigO{\left(\left|\Gamma\left(2q'e_{2}\right)-\Gamma\left(y+q'e_{2}\right)+c\varepsilon\left(q'- y_{2}\right)\right|^{2}\chi_{B_{\rho}\left(q'e_{2}\right)}\right)}.\label{orthogonality-condition-error-improvement-1}
			\end{align}
			Expanding, we obtain that on the support of $E_{1}$
			\begin{align}
				&\Gamma\left(2q'e_{2}\right)-\Gamma\left(y+q'e_{2}\right)+c\varepsilon\left(q'- y_{2}\right)=-\varepsilon\left(y_{2}-q'\right)g(q')+\frac{m}{8\left(q'\right)^{2}}\left(y_{1}^{2}-\left(y_{2}-q'\right)^{2}\right)\nonumber\\
				&+\frac{m}{24\left(q'\right)^{3}}\left(4\left(y_{2}-q'\right)^{3}-3\left(y_{2}-q'\right)\left|y-q'e_{2}\right|^{2}\right)+\bigO{\left(\varepsilon^{4}\right)}=\bigO{\left(\varepsilon^{2}\right)},\label{orthogonality-condition-error-improvement-2}
			\end{align}
			by \eqref{g-bound}. Thus $E_{1}=\bigO{\left(\varepsilon^{2}\right)}$. The same bound holds for $E_{2}$, and therefore $E$. Consequently, we can improve \eqref{error-bound-on-psi-q} and get
			\begin{align}
				\left\|\psi_{q'}\right\|_{L^{\infty}\left(\mathbb{R}^{2}\right)}\leq C\left\|E\right\|_{L^{\infty}\left(\mathbb{R}^{2}\right)}\leq C\varepsilon^{2},\label{orthogonality-condition-error-improvement-3}
			\end{align}
			giving \eqref{orthogonality-condition-error-improvement-statement}. Then \eqref{orthogonality-condition-error-improvement-3} gives the bounds
			\begin{equation}\label{orthogonality-condition-error-improvement-4}
				\begin{aligned}
					\left|\int_{B_{\rho}\left(q'e_{2}\right)}E_{1}Z_{2,2}\right|&\leq C\varepsilon^{4},\\
					\left|\int_{\mathbb{R}^{2}}Z_{2}N\left[\psi_{q'}\right]\right|&\leq C\left\|\psi_{q'}\right\|_{L^{\infty}\left(\mathbb{R}^{2}\right)}^{2}\leq C\varepsilon^{4},\\
					\left|\int_{\mathbb{R}^{2}}Z_{2}L\left[\psi_{q'}\right]\right|&\leq C\varepsilon\left\|\psi_{q'}\right\|_{L^{\infty}\left(\mathbb{R}^{2}\right)}\leq C\varepsilon^{3},
				\end{aligned}
			\end{equation}
			improving the bounds \eqref{c-q-relationship-proof-nonlinearity-bound}--\eqref{E1-Z22-first-bound}. We note that the bound for the integral of $E_{1}Z_{2,2}$ comes from the fact that $Z_{2,2}$ is to main order a constant at order $\bigO{\left(\varepsilon\right)}$ with a remainder of order $\bigO{\left(\varepsilon^{2}\right)}$. Thus, due to the fact that we can represent $E_{1}$ using \eqref{orthogonality-condition-error-improvement-1}--\eqref{orthogonality-condition-error-improvement-2}, we can see that it is to main order a sum of a mode $1$ and mode $2$ term with respect to $y-q'e_{2}$, with remainder $\bigO{\left(\varepsilon^{3}\right)}$. Thus the product of the main order terms from $E_{1}$ and $Z_{2,2}$ integrate to give $0$ on $B_{\rho}\left(q'e_{2}\right)$. Next, \\eqref{orthogonality-condition-error-improvement-1}--\eqref{orthogonality-condition-error-improvement-2} also gives us
			\begin{align}
				E_{1}Z_{2,1}=-\dell_{2}\left(\left(\Gamma(y-q'e_{2})\right)^{\gamma}_{+}\right)\left(\Gamma\left(2q'e_{2}\right)-\Gamma\left(y+q'e_{2}\right)+c\varepsilon\left(q'- y_{2}\right)\right)+\bigO{\left(\varepsilon^{4}\right)}.\label{orthogonality-condition-error-improvement-6}
			\end{align}
			Finally, repeating the procedure of \eqref{c-q-relationship-7}--\eqref{c-q-relationship-12} with the improved bounds given in \eqref{orthogonality-condition-error-improvement-4}--\eqref{orthogonality-condition-error-improvement-6}, as well as now taking into account that the order $\varepsilon^{2}$ in the expansion \eqref{c-q-relationship-7a} is mode $1$, and is being integrated against a radial function to give us $0$, we conclude that
			\begin{align}
				0=2M\varepsilon\left(c-\frac{m}{2q}+\bigO{\left(\varepsilon^{2}\right)}\right),\nonumber
			\end{align}
			which gives \eqref{orthogonality-condition-error-improvement-statement-2}.
		\end{proof}
		\begin{remark}\label{mode-remark}
			Since $V$ is, to main order, radial, $\psi_{q'}$ exhibits an analogous modal expansion as in \eqref{orthogonality-condition-error-improvement-2} on a ball $B_{\rho}\left(q'e_{2}\right)$, some fixed $\rho>0$, with an analogous statement holding in coordinates $y+q'e_{2}$ on some ball $B_{\rho}(-q'e_{2})$.
		\end{remark}
		We now show how the vortex pair behaves upon $q'$-linearisation.
		\section{Properties of the Vortex Pair Under Linearisation}\label{q-linearisation-properties-section}
		Thus far we have found a family of solutions, $\Psi_{\varepsilon}$, to the problem
		\begin{align}
			\Delta\Psi_{\varepsilon}+f_{\varepsilon}(\Psi_{\varepsilon}-c\varepsilon y_{2})&=0,\nonumber\\
			\Psi_{\varepsilon}&\in X,\label{2d-euler-travelling-semilinear-elliptic-equation-epsilon-new}
		\end{align}
		for all $\varepsilon>0$ small enough, parameterised by $q$, where we recall the definition of $f_{\varepsilon}$ from \eqref{nonlinearity-definition}, and from \eqref{solution-banach-space} we recall that $X$ was the collection of $L^{\infty}$ functions on $\mathbb{R}^{2}$ even in $y_{1}$ and odd in $y_{2}$, as well as orthogonality condition \eqref{solution-Z2-orthogonality-condition}. We know this solution has the form
		\begin{align}
			\Psi_{\varepsilon}(y)=\Gamma\left(y-q'e_{2}\right)-\Gamma\left(y+q'e_{2}\right)+\psi_{q'}\left(y\right),\nonumber
		\end{align}
		where $c=m/2q + g(q')$, with $g(q')=\bigO{\left(\varepsilon^{2}\right)}$.
		
		\medskip
		We would like to show the existence of $\dell_{q'}\psi_{q'}$ and $\dell_{q'}c$, as well as proving bounds on these derivatives. To help with clarity when doing computations in this section, we introduce some notation. Recall that
		\begin{align}
			f_{\varepsilon}\left(\Psi_{\varepsilon}-c\varepsilon y_{2}\right)&=\left(\Gamma\left(y-q'e_{2}\right)-\Gamma\left(y+q'e_{2}\right)+\psi_{q'}-c\varepsilon y_{2}-\left|\log{\varepsilon}\right|\Omega\right)^{\gamma}_{+}\chi_{B_{\frac{s}{\varepsilon}}\left(q'e_{2}\right)}(y)\nonumber\\
			&-\left(\Gamma\left(y+q'e_{2}\right)-\Gamma\left(y-q'e_{2}\right)-\psi_{q'}+c\varepsilon y_{2}-\left|\log{\varepsilon}\right|\Omega\right)^{\gamma}_{+}\chi_{B_{\frac{s}{\varepsilon}}\left(-q'e_{2}\right)}(y).\nonumber
		\end{align}
		Thus
		\begin{align}
			f_{\varepsilon}'\left(\Psi_{\varepsilon}-c\varepsilon y_{2}\right)&=\gamma\left(\Gamma\left(y-q'e_{2}\right)-\Gamma\left(y+q'e_{2}\right)+\psi_{q'}-c\varepsilon y_{2}-\left|\log{\varepsilon}\right|\Omega\right)^{\gamma-1}_{+}\chi_{B_{\frac{s}{\varepsilon}}\left(q'e_{2}\right)}(y)\nonumber\\
			&+\gamma\left(\Gamma\left(y+q'e_{2}\right)-\Gamma\left(y-q'e_{2}\right)-\psi_{q'}+c\varepsilon y_{2}-\left|\log{\varepsilon}\right|\Omega\right)^{\gamma-1}_{+}\chi_{B_{\frac{s}{\varepsilon}}\left(-q'e_{2}\right)}(y).\nonumber
		\end{align}
		We let 
		\begin{equation}\label{derivative-of-nonlinearity-plus}
			\begin{aligned}
				\left(f_{\varepsilon}'\right)^{+}&=\gamma\left(\Gamma\left(y-q'e_{2}\right)-\Gamma\left(y+q'e_{2}\right)+\psi_{q'}-c\varepsilon y_{2}-\left|\log{\varepsilon}\right|\Omega\right)^{\gamma-1}_{+}\chi_{B_{\frac{s}{\varepsilon}}\left(q'e_{2}\right)}(y),\\
				\left(f_{\varepsilon}'\right)^{-}&=\gamma\left(\Gamma\left(y+q'e_{2}\right)-\Gamma\left(y-q'e_{2}\right)-\psi_{q'}+c\varepsilon y_{2}-\left|\log{\varepsilon}\right|\Omega\right)^{\gamma-1}_{+}\chi_{B_{\frac{s}{\varepsilon}}\left(-q'e_{2}\right)}(y).
			\end{aligned}
		\end{equation}
		Our first step is to see what equation the difference quotient of $\psi_{q'}$ solves. Note that varying $q$ does not change the symmetries in $y_{1}$ or $y_{2}$ for $\Psi_{\varepsilon}$, and so we can instead just work on $\mathbb{R}^{2}_{+}$, and use the fact that
		\begin{align}
			\Delta\psi_{q'}\left(y\right)&+\left(\Gamma\left(y-q'e_{2}\right)-\Gamma\left(y+q'e_{2}\right)+\psi_{q'}\left(y\right)-c(q)\varepsilon y_{2}-\left|\log{\varepsilon}\right|\Omega(q)\right)^{\gamma}_{+}\chi_{B_{\frac{s}{\varepsilon}}\left(q'e_{2}\right)}(y)\nonumber\\
			&=-\Delta\Gamma\left(y-q'e_{2}\right)\nonumber
		\end{align}
		on the upper half plane. Here we have used 
		\begin{align}
			\Delta\Gamma\left(y+q'e_{2}\right)=0,\ \  y_{2}\geq0,\nonumber
		\end{align}
		and also emphasised the dependence of $c$ and $\Omega$ on $q$. We define $\vartheta_{h}$, $h\in\mathbb{R}$, as the difference quotient for $\psi_{q'}$:
		\begin{align}
			\vartheta_{h}\left(y\right)\coloneqq\frac{\psi_{q'+h}\left(y\right)-\psi_{q'}\left(y\right)}{h}.\nonumber
		\end{align}
		Similarly, we define $\mathfrak{c}_{h}$ as
		\begin{align}
			\mathfrak{c}_{h}=\frac{c\left(q+\varepsilon h\right)-c\left(q\right)}{h}.\label{c-difference-quotient}
		\end{align}
		The nonlinearities for the semilinear elliptic equations $\psi_{q'}$ and $\psi_{q'+h}$ solve have different supports. However, we have that, for all $\varepsilon>0$ and $h\in\mathbb{R}$ small enough, these nonlinearities are both supported in
		\begin{align}
			B_{\rho'}\left(q'e_{2}\right)\cup B_{\rho'}\left(\left(q'+h\right)e_{2}\right)\subsetneq B_{\frac{s}{\varepsilon}}\left(q'e_{2}\right)\cap B_{\frac{s}{\varepsilon}}\left(\left(q'+h\right)e_{2}\right),\label{support-of-terms-in-intersection-of-balls}
		\end{align}
		some $\rho'>0$ independent of $\varepsilon$. In addition, we can say that the boundary of the set on the left hand side of \eqref{support-of-terms-in-intersection-of-balls} is positively separated from the boundary of the set on the right hand side. 
		For clarity, we also introduce the notation
		\begin{align}
			\mathscr{I}&\coloneqq B_{\frac{s}{\varepsilon}}\left(q'e_{2}\right)\cap B_{\frac{s}{\varepsilon}}\left(\left(q'+h\right)e_{2}\right).\nonumber
		\end{align}
		With \eqref{support-of-terms-in-intersection-of-balls} in mind, we define a quadratic term for $\left(\psi_{q'+h},\psi_{q'}\right)$:
		\begin{align}
			&\mathscr{N}\left[\psi_{q'+h},\psi_{q'}\right]\nonumber\\
			&=\frac{1}{h}\left(\Gamma\left(y-q'e_{2}\right)-\Gamma\left(y+q'e_{2}\right)+\psi_{q'+h}\left(y\right)-c\left(q\right)\varepsilon y_{2}-\left|\log{\varepsilon}\right|\Omega(q)\right)^{\gamma}_{+}\chi_{\mathscr{I}}\nonumber\\
			&-\frac{1}{h}\left(\Gamma\left(y-q'e_{2}\right)-\Gamma\left(y+q'e_{2}\right)+\psi_{q'}\left(y\right)-c\left(q\right)\varepsilon y_{2}-\left|\log{\varepsilon}\right|\Omega(q)\right)^{\gamma}_{+}\chi_{\mathscr{I}}\nonumber\\
			&-\gamma\vartheta_{h}(y)\left(\Gamma\left(y-q'e_{2}\right)-\Gamma\left(y+q'e_{2}\right)+\psi_{q'}\left(y\right)-c\left(q\right)\varepsilon y_{2}-\left|\log{\varepsilon}\right|\Omega(q)\right)^{\gamma-1}_{+}\chi_{\mathscr{I}}.\nonumber
		\end{align}
		We also define some error terms:
		\begin{align}
			&\mathscr{E}_{h}=\frac{1}{h}\left(\left(\Gamma\left(y-\left(q'+h\right)e_{2}\right)\right)^{\gamma}_{+}-\left(\Gamma\left(y-q'e_{2}\right)\right)^{\gamma}_{+}\right)\nonumber\\
			&-\frac{1}{h}\left(\Gamma\left(y-\left(q'+h\right)e_{2}\right)-\Gamma\left(y+\left(q'+h\right)e_{2}\right)+\psi_{q'+h}\left(y\right)-c\left(q+\varepsilon h\right)\varepsilon y_{2}-\left|\log{\varepsilon}\right|\Omega(q+\varepsilon h)\right)^{\gamma}_{+}\chi_{\mathscr{I}}\nonumber\\
			&+\frac{1}{h}\left(\Gamma\left(y-q'e_{2}\right)-\Gamma\left(y+q'e_{2}\right)+\psi_{q'+h}\left(y\right)-c\left(q+\varepsilon h\right)\varepsilon y_{2}-\left|\log{\varepsilon}\right|\Omega(q+\varepsilon h)\right)^{\gamma}_{+}\chi_{\mathscr{I}}.\nonumber
		\end{align}
		\begin{align}
			\mathscr{G}_{h}&=\frac{1}{h}\left(\Gamma\left(y-q'e_{2}\right)-\Gamma\left(y+q'e_{2}\right)+\psi_{q'+h}\left(y\right)-c\left(q+\varepsilon h\right)\varepsilon y_{2}-\left|\log{\varepsilon}\right|\Omega(q+\varepsilon h)\right)^{\gamma}_{+}\chi_{\mathscr{I}}\nonumber\\
			&-\frac{1}{h}\left(\Gamma\left(y-q'e_{2}\right)-\Gamma\left(y+q'e_{2}\right)+\psi_{q'+h}\left(y\right)-c\left(q\right)\varepsilon y_{2}-\left|\log{\varepsilon}\right|\Omega(q)\right)^{\gamma}_{+}\chi_{\mathscr{I}}.\nonumber
		\end{align}
		Then the equation $\vartheta_{h}$ solves on the upper half plane is
		\begin{align}
			\Delta\vartheta_{h}+\left(f_{\varepsilon}'\right)^{+}\vartheta_{h}=\mathscr{E}_{h}-\mathscr{G}_{h}-\mathscr{N}\left[\psi_{q'+h},\psi_{q'}\right],\label{difference-quotient-elliptic-equation}
		\end{align}
		where we recall the definition of $\left(f_{\varepsilon}'\right)^{+}$ from \eqref{derivative-of-nonlinearity-plus}. Note that due to \eqref{support-of-terms-in-intersection-of-balls}, all terms on the right hand side of \eqref{difference-quotient-elliptic-equation} are still differentiable.
		
		\medskip
		Now since we can explicitly differentiate $\Gamma\left(y\pm q'e_{2}\right)$ with respect to $q'$, and both $\psi_{q'}$ and $c(q)$ are continuous in $q'$, with uniform bounds with respect to $q$, we have that
		\begin{align}
			\left\|\mathscr{E}_{h}\right\|_{L^{\infty}\left(\mathbb{R}^{2}_{+}\right)}\leq C,\label{difference-quotient-error-estimate}
		\end{align}
		some $C$ independent of $h$, for all $h\in\mathbb{R}$ small enough.
		
		\medskip
		For the quadratic term, we have
		\begin{align}
			\left\|\mathscr{N}\left[\psi_{q'+h},\psi_{q'}\right]\right\|_{L^{\infty}\left(\mathbb{R}^{2}_{+}\right)}\leq C\left\|\vartheta_{h}\right\|_{L^{\infty}\left(\mathbb{R}^{2}_{+}\right)}\left\|\psi_{q'+h}-\psi_{q'}\right\|_{L^{\infty}\left(\mathbb{R}^{2}_{+}\right)}.\label{difference-quotient-quadratic-term-estimate}
		\end{align}
		Finally, for $\mathscr{G}_{h}$, we first calculate
		\begin{align}
			&\frac{\left(c\left(q+\varepsilon h\right)-c(q)\right)\varepsilon y_{2}+\left(\left|\log{\varepsilon}\right|\Omega\left(q+\varepsilon h\right)-\left|\log{\varepsilon}\right|\Omega(q)\right)}{h}\nonumber\\
			&=\varepsilon\mathfrak{c}_{h}\left(y_{2}-q'\right)+\left(\frac{m}{h}\log{\left(1+\frac{\varepsilon h}{q}\right)}-c\left(q+\varepsilon h\right)\varepsilon\right),\label{c-difference-quotient-error-estimate-1}
		\end{align}
		where we have used the definition of $\Omega$ in \eqref{Omega-definition-1}, and the definition of $\mathfrak{c}_{h}$ in \eqref{c-difference-quotient}. Thus
		\begin{align}
			\mathscr{G}_{h}=\mathscr{G}_{h}^{(1)}+\mathscr{G}_{h}^{(2)},\nonumber
		\end{align}
		where
		\begin{align}
			&\left|\mathscr{G}_{h}^{(1)}-\gamma\varepsilon\mathfrak{c}_{h}\left(y_{2}-q'\right)\left(\Gamma\left(y-q'e_{2}\right)-\Gamma\left(y+q'e_{2}\right)+\psi_{q'+h}\left(y\right)-c\left(q\right)\varepsilon y_{2}-\left|\log{\varepsilon}\right|\Omega(q)\right)^{\gamma-1}_{+}\chi_{\mathscr{I}}\right|\nonumber\\
			&\leq C\varepsilon,\label{c-difference-quotient-error-estimate-3}
		\end{align}
		and
		\begin{align}
			\left|\mathscr{G}_{h}^{(2)}\right|\leq C\varepsilon,\label{c-difference-quotient-error-estimate-4}
		\end{align}
		with $C$ independent of $h$. In both \eqref{c-difference-quotient-error-estimate-3} and \eqref{c-difference-quotient-error-estimate-4}, we have used the fact that we can bound
		\begin{align}
			\left|\left(c\left(q+\varepsilon h\right)-c(q)\right)\varepsilon y_{2}+\left(\left|\log{\varepsilon}\right|\Omega\left(q+\varepsilon h\right)-\left|\log{\varepsilon}\right|\Omega(q)\right)\right|\leq C\varepsilon,\label{c-difference-quotient-error-estimate-5}
		\end{align}
		for some $C$ independent of $h$, using the continuity of $c$ and $\Omega$ as functions of $q$.
		
		\medskip
		We also need to calculate the ``approximate orthogonality condition" that $\vartheta_{h}$ satisfies. Given both $\psi_{q'+h}$ and $\psi_{q'}$ satisfy \eqref{solution-Z2-orthogonality-condition}, we have
		\begin{align}
			\int_{\mathbb{R}^{2}_{+}}V\left(q'+h\right)Z_{2}\left(q'+h\right)\psi_{q'+h}-\int_{\mathbb{R}^{2}_{+}}V\left(q'\right)Z_{2}\left(q'\right)\psi_{q'}=0.\label{difference-quotient-approximate-orthogonality-condition-1}
		\end{align}
		We can rearrange this to
		\begin{align}
			\int_{\mathbb{R}^{2}_{+}}V\left(q'\right)Z_{2}\left(q'\right)\vartheta_{h}=\frac{1}{h}\int_{\mathbb{R}^{2}_{+}}\left(V\left(q'\right)Z_{2}\left(q'\right)-V\left(q'+h\right)Z_{2}\left(q'+h\right)\right)\psi_{q'+h}.\label{difference-quotient-approximate-orthogonality-condition-2}
		\end{align}
		Our first result is a bound on $\mathfrak{c}_{h}$.
		\begin{lemma}
			Let $\left(q,c(q),\varepsilon\right)$ be as in Theorem \ref{c-q-relationship-lemma}. Then for all $\varepsilon>0$ and $h\in\mathbb{R}$ small enough, we have the bound
			\begin{align}
				\left|\varepsilon\mathfrak{c}_{h}\right|\leq C\left(\varepsilon\left\|\vartheta_{h}\right\|_{L^{\infty}\left(\mathbb{R}^{2}_{+}\right)}+1\right),\label{c-difference-quotient-bound-statement}
			\end{align}
			some constant $C>0$ independent of $h$.
		\end{lemma}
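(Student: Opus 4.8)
The plan is to follow the scheme of Lemma~\ref{estimate-on-coefficient-of-projection-lemma}: multiply the equation \eqref{difference-quotient-elliptic-equation} for $\vartheta_h$ by the approximate kernel element $Z_{2,1}(y)=\dell_2\Gamma(y-q'e_2)$ from \eqref{dq-approximate-kernel-element}, integrate over $\mathbb{R}^2_+$ (equivalently over $\mathbb{R}^2$ after the odd extension in $y_2$), and solve the resulting scalar identity for $\varepsilon\mathfrak{c}_h$. Differentiating \eqref{power-semilinear-problem-R2} in $y_2$ gives $\Delta Z_{2,1}+\gamma\,\Gamma(\cdot-q'e_2)_+^{\gamma-1}Z_{2,1}=0$ on $\mathbb{R}^2$, so $Z_{2,1}$ is an exact kernel element of the core linearisation. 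Using the cancellation \eqref{Omega-definition-1}, the core expansion \eqref{orthogonality-condition-error-improvement-2}, and the bound \eqref{orthogonality-condition-error-improvement-statement} on $\psi_{q'}$ and $\psi_{q'+h}$, the argument of $(\,\cdot\,)_+^{\gamma-1}$ in $(f_\varepsilon')^+$ (see \eqref{derivative-of-nonlinearity-plus}) equals $\Gamma(y-q'e_2)+\mathcal{O}(\varepsilon^2)$ on the fixed ball $B_\rho(q'e_2)$ and vanishes outside it, together with $\Gamma(\cdot-q'e_2)_+$. Hence $(f_\varepsilon')^+-\gamma\,\Gamma(\cdot-q'e_2)_+^{\gamma-1}$ is $\mathcal{O}(\varepsilon^2)$ with support in a fixed ball, and integrating by parts the left-hand side $\int(\Delta\vartheta_h+(f_\varepsilon')^+\vartheta_h)Z_{2,1}$ yields $\int\vartheta_h[(f_\varepsilon')^+-\gamma\,\Gamma(\cdot-q'e_2)_+^{\gamma-1}]Z_{2,1}=\mathcal{O}(\varepsilon\|\vartheta_h\|_{L^\infty(\mathbb{R}^2_+)})$ plus a boundary term on $\{y_2=0\}$; since $|Z_{2,1}(y_1,0)|\le C\varepsilon$ there and $\nabla\vartheta_h$ decays at infinity (the right-hand side of \eqref{difference-quotient-elliptic-equation} being bounded with fixed compact support), this boundary term is negligible and is treated exactly as in \cite{ao}.

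For the right-hand side, \eqref{difference-quotient-error-estimate} and the fact that $\mathscr{E}_h$ is supported in a fixed ball give $|\int\mathscr{E}_hZ_{2,1}|\le C$; \eqref{difference-quotient-quadratic-term-estimate} with $\|\psi_{q'+h}-\psi_{q'}\|_{L^\infty}\le C\varepsilon^2$ gives $|\int\mathscr{N}[\psi_{q'+h},\psi_{q'}]Z_{2,1}|\le C\varepsilon^2\|\vartheta_h\|_{L^\infty(\mathbb{R}^2_+)}$; and \eqref{c-difference-quotient-error-estimate-4} gives $|\int\mathscr{G}_h^{(2)}Z_{2,1}|\le C\varepsilon$. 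The only term carrying $\mathfrak{c}_h$ at leading order is $\int\mathscr{G}_h^{(1)}Z_{2,1}$: by \eqref{c-difference-quotient-error-estimate-3} and the expansion above, $\int\mathscr{G}_h^{(1)}Z_{2,1}=\varepsilon\mathfrak{c}_h\,\gamma\!\int_{\mathbb{R}^2}(y_2-q')\,\Gamma(y-q'e_2)_+^{\gamma-1}\,\dell_2\Gamma(y-q'e_2)\,dy+\mathcal{O}(\varepsilon)+\mathcal{O}(\varepsilon|\varepsilon\mathfrak{c}_h|)$. Changing variables $w=y-q'e_2$, passing to polar coordinates and integrating by parts in $|w|$ (using $\nu(1)=0$), the displayed integral equals $-2\pi\int_0^1 r\,\nu(r)^\gamma\,dr=-\int_{B_1}\nu^\gamma=-M$, with $M$ as in \eqref{vortex-mass-definition}; in particular it is a nonzero absolute constant. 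Combining everything via \eqref{difference-quotient-elliptic-equation} gives $M\,\varepsilon\mathfrak{c}_h=\mathcal{O}(1)+\mathcal{O}(\varepsilon\|\vartheta_h\|_{L^\infty(\mathbb{R}^2_+)})+\mathcal{O}(\varepsilon|\varepsilon\mathfrak{c}_h|)$, and for $\varepsilon$ small the last term is absorbed, yielding \eqref{c-difference-quotient-bound-statement}.

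The main obstacle is the first step: rigorously showing $Z_{2,1}$ is an approximate kernel element for $\Delta+(f_\varepsilon')^+$ with error that is simultaneously $\mathcal{O}(\varepsilon)$ in size and confined to a fixed ball, which is exactly where the choice \eqref{Omega-definition-1} of $\Omega$, the core expansion \eqref{orthogonality-condition-error-improvement-2}, and the smallness \eqref{orthogonality-condition-error-improvement-statement} of $\psi_{q'}$ are used, together with the control of the boundary term on $\{y_2=0\}$ (which requires a decay estimate for $\nabla\vartheta_h$ extracted from \eqref{difference-quotient-elliptic-equation}; there $\varepsilon\mathfrak{c}_h$ reappears but only at relative order $\varepsilon$, so it is absorbed as above). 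The non-degeneracy computation giving the constant $-M$ is routine.
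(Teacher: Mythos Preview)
Your approach is essentially the paper's: multiply \eqref{difference-quotient-elliptic-equation} by an approximate kernel element, integrate, and isolate the coefficient of $\varepsilon\mathfrak{c}_h$ as a nonzero constant times the same integral you compute. The paper, however, tests against the full $Z_2=Z_{2,1}+Z_{2,2}$ rather than $Z_{2,1}$ alone. Since $Z_2$ is odd in $y_2$ it vanishes on $\{y_2=0\}$, so after moving $\Delta$ onto $Z_2$ there is no boundary term at all; the left side is directly $\int_{\mathbb{R}^2_+}\vartheta_h\big(\Delta Z_2+(f_\varepsilon')^+Z_2\big)=\mathcal{O}(\varepsilon\|\vartheta_h\|_{L^\infty})$, and on the support of $(f_\varepsilon')^+$ one has $Z_{2,2}=\mathcal{O}(\varepsilon)$, so the non-degeneracy computation reduces to exactly the integral you wrote. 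Your choice of $Z_{2,1}$ works but creates the boundary term on $\{y_2=0\}$ as a self-inflicted complication; note also that your parenthetical ``equivalently over $\mathbb{R}^2$ after the odd extension'' is not literally correct, since $Z_{2,1}$ is \emph{not} odd in $y_2$ (under $y_2\mapsto -y_2$ it becomes $-Z_{2,2}$), so the half-plane and full-plane integrals genuinely differ and the boundary contribution must be estimated as you sketch. That estimate is fine (harmonic gradient bounds give $|\nabla\vartheta_h|\le C\varepsilon\|\vartheta_h\|_{L^\infty}$ on $\{y_2=0\}$, and $Z_{2,1}(\cdot,0)\in L^1$), but testing with $Z_2$ avoids the issue entirely. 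Your explicit identification of the non-degeneracy constant as $-M$ via integration by parts is correct and slightly sharper than the paper, which simply records it as some $-c_0<0$.
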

		\begin{proof}
			The proof follows a similar strategy to the proof of Lemma \ref{estimate-on-coefficient-of-projection-lemma}, which can adapted from \cite{ao}. We integrate both sides of \eqref{difference-quotient-elliptic-equation} against $Z_{2}$ and obtain
			\begin{align}
				\int_{\mathbb{R}^{2}_{+}}\vartheta_{h}\left(\Delta Z_{2}+\left(f_{\varepsilon}'\right)^{+}Z_{2}\right)=\int_{\mathbb{R}^{2}_{+}}\mathscr{E}_{h}Z_{2}-\int_{\mathbb{R}^{2}_{+}}\mathscr{G}_{h}Z_{2}-\int_{\mathbb{R}^{2}_{+}}\mathscr{N}\left[\psi_{q'+h},\psi_{q'}\right]Z_{2}.\label{c-difference-quotient-bound-1}
			\end{align}
			Due to the bound on $\psi_{q'}$, \eqref{orthogonality-condition-error-improvement-statement}, we have
			\begin{align}
				\left|\int_{\mathbb{R}^{2}_{+}}\vartheta_{h}\left(\Delta Z_{2}+\left(f_{\varepsilon}'\right)^{+}Z_{2}\right)\right|\leq C\varepsilon\left\|\vartheta_{h}\right\|_{L^{\infty}\left(\mathbb{R}^{2}_{+}\right)}.\nonumber
			\end{align}
			Next,
			\begin{align}
				\left|\int_{\mathbb{R}^{2}_{+}}\mathscr{E}_{h}Z_{2}-\int_{\mathbb{R}^{2}_{+}}\mathscr{N}\left[\psi_{q'+h},\psi_{q'}\right]Z_{2}\right|\leq C\left(1+\varepsilon^{2}\left\|\vartheta_{h}\right\|_{L^{\infty}\left(\mathbb{R}^{2}_{+}\right)}\right),\nonumber
			\end{align}
			using \eqref{orthogonality-condition-error-improvement-statement} and \eqref{difference-quotient-quadratic-term-estimate}. Using \eqref{c-difference-quotient-error-estimate-1}--\eqref{c-difference-quotient-error-estimate-5}, we have
			\begin{align}
				\left|\int_{\mathbb{R}^{2}_{+}}\mathscr{G}_{h}Z_{2}-\mathcal{I}\right|\leq C\varepsilon,\nonumber
			\end{align}
			where $\mathcal{I}$ is given by
			\begin{align}
				\int_{\mathbb{R}^{2}_{+}}\gamma\varepsilon\mathfrak{c}_{h}\left(y_{2}-q'\right)\left(\Gamma\left(y-q'e_{2}\right)-\Gamma\left(y+q'e_{2}\right)+\psi_{q'+h}\left(y\right)-c\left(q\right)\varepsilon y_{2}-\left|\log{\varepsilon}\right|\Omega(q)\right)^{\gamma-1}_{+}\chi_{\mathscr{I}}Z_{2}.\nonumber
			\end{align}
			Thus it remains to bound $\mathcal{I}$. First note that 
			\begin{align}
				&\int_{\mathbb{R}^{2}_{+}}\gamma\left(y_{2}-q'\right)\left(\Gamma\left(y-q'e_{2}\right)-\Gamma\left(y+q'e_{2}\right)+\psi_{q'+h}\left(y\right)-c\left(q\right)\varepsilon y_{2}-\left|\log{\varepsilon}\right|\Omega(q)\right)^{\gamma-1}_{+}\chi_{\mathscr{I}}Z_{2}\nonumber\\
				&=\int_{\mathbb{R}^{2}_{+}}\frac{\left(y_{2}-q'\right)^{2}}{\left|y-q'e_{2}\right|}\nu'\left(\left|y-q'e_{2}\right|\right)\gamma\nu^{\gamma-1}\left(\left|y-q'e_{2}\right|\right)dy+\bigO{\left(\varepsilon\right)},\label{c-difference-quotient-bound-6}
			\end{align}
			where the $\bigO{\left(\varepsilon\right)}$ term on the right hand side of \eqref{c-difference-quotient-bound-6} has bound independent of $h$.
			
			\medskip
			Then we note that the first term on the right side of \eqref{c-difference-quotient-bound-6} is
			\begin{align}
				\int_{0}^{2\pi}\int_{0}^{1}\left(r^{2}\sin^{2}{\theta}\right)\nu'\left(r\right)\gamma\nu^{\gamma-1}\left(r\right)drd\theta=-c_{0},\label{c-difference-quotient-bound-7}
			\end{align}
			some absolute constant $c_{0}>0$. This is due to the fact that $\nu'<0$ for $r\in(0,1]$. Thus, combining \eqref{c-difference-quotient-bound-1}--\eqref{c-difference-quotient-bound-7} gives the desired result.
		\end{proof}
		Next we show that $\left\|\vartheta_{h}\right\|_{L^{\infty}\left(\mathbb{R}^{2}_{+}\right)}$ is uniformly bounded in $h$.
		\begin{lemma}
			Let $\left(q,c(q),\varepsilon\right)$ be as in Theorem \ref{c-q-relationship-lemma}. Then for all $\varepsilon>0$ small enough, there is an $h_{0}>0$ small enough such that we have the bound
			\begin{align}
				\sup_{\left|h\right|\leq h_{0}}\left\|\vartheta_{h}\right\|_{L^{\infty}\left(\mathbb{R}^{2}_{+}\right)}\leq C,\nonumber
			\end{align}
			some constant $C>0$.
		\end{lemma}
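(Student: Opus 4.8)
The plan is to estimate $\left\|\vartheta_{h}\right\|_{L^{\infty}(\mathbb{R}^{2}_{+})}$ directly from the linearised equation \eqref{difference-quotient-elliptic-equation} by reducing it to the a priori estimate of Lemma \ref{main-a-priori-estimate-lemma}, and then absorbing the terms proportional to $\vartheta_{h}$ that sit on the right-hand side. First I would reconcile \eqref{difference-quotient-elliptic-equation} with the hypotheses of Lemma \ref{main-a-priori-estimate-lemma}. Since $\left(f_{\varepsilon}'\right)^{+}$ differs from $V^{+}$ only through the presence of $\psi_{q'}$ inside $(\cdot)^{\gamma-1}_{+}$, and $\|\psi_{q'}\|_{L^{\infty}}\leq C\varepsilon^{2}$ by \eqref{orthogonality-condition-error-improvement-statement}, one has $\big\|\big(\left(f_{\varepsilon}'\right)^{+}-V^{+}\big)\vartheta_{h}\big\|_{**}\leq C\varepsilon^{2}\left\|\vartheta_{h}\right\|_{L^{\infty}(\mathbb{R}^{2}_{+})}$ (the difference is compactly supported near $q'e_{2}$, where $W$ is bounded above and below and $(\cdot)^{\gamma-2}_{+}$ is bounded, using $\gamma>3$). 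Moving this term to the right, extending $\vartheta_{h}$ oddly in $y_{2}$ to all of $\mathbb{R}^{2}$ --- compatible with \eqref{generalised-symmetry-assumptions}, since $\vartheta_{h}$ vanishes on $\{y_{2}=0\}$ --- and noting that the potential then becomes $V=V^{+}+V^{-}$, equation \eqref{difference-quotient-elliptic-equation} takes the form $L[\vartheta_{h}]=\Delta\vartheta_{h}+V\vartheta_{h}=\widetilde{R}_{h}$ on $\mathbb{R}^{2}$, with $\widetilde{R}_{h}$ bounded, compactly supported, and with the symmetry required by \eqref{generalised-symmetry-assumptions}.

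The orthogonality of $\vartheta_{h}$ to $Z_{1}$ is automatic from the $y_{1}$-parity, but orthogonality to $Z_{2}$ holds only approximately: by \eqref{difference-quotient-approximate-orthogonality-condition-2}, $\int_{\mathbb{R}^{2}}VZ_{2}\vartheta_{h}=\delta_{h}$, and estimating the $q'$-difference quotient of $VZ_{2}$ --- using that $V,Z_{2}$ are $C^{1}$ in $q'$ (here $\gamma>3$ guarantees enough regularity of the power nonlinearity, and $\Gamma\in C^{2}$), that $Z_{2}$ and the relevant derivative of $V$ are bounded on the compact set where $V$ lives, and that the $c$-dependence brings in a factor $\mathfrak{c}_{h}$ controlled by \eqref{c-difference-quotient-bound-statement} --- together with $\|\psi_{q'+h}\|_{L^{\infty}}\leq C\varepsilon^{2}$, gives $|\delta_{h}|\leq C\varepsilon\big(1+\varepsilon\left\|\vartheta_{h}\right\|_{L^{\infty}(\mathbb{R}^{2}_{+})}\big)$. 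I would then split $\vartheta_{h}=\widehat{\vartheta}_{h}+\frac{\delta_{h}}{\int VZ_{2}^{2}}Z_{2}$, where $\int VZ_{2}^{2}\geq c_{0}>0$ by \eqref{c-q-relationship-2}; the $Z_{2}$-correction has $L^{\infty}$-norm $\leq C|\delta_{h}|$ and $L[Z_{2}]=\mathcal{O}(\varepsilon)$ in $\|\cdot\|_{**}$ (this is the cancellation between $\Delta\partial_{2}\Gamma$ and $V\partial_{2}\Gamma$ underlying the non-degeneracy used in Lemma \ref{psi-a-priori-estimate-lemma}), so $\widehat{\vartheta}_{h}\in Y$ solves a problem of the form \eqref{projected-linear-problem-formulation-1} with $a=b=0$, and Lemma \ref{main-a-priori-estimate-lemma} yields $\left\|\vartheta_{h}\right\|_{L^{\infty}(\mathbb{R}^{2}_{+})}\leq C\big(\|\widetilde{R}_{h}\|_{**}+|\delta_{h}|\big)$.

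It remains to control $\|\widetilde{R}_{h}\|_{**}$ from the estimates already at hand. The term $\mathscr{E}_{h}$ is bounded uniformly in $h$ by \eqref{difference-quotient-error-estimate} and is supported near $q'e_{2}$, so $\|\mathscr{E}_{h}\|_{**}\leq C$; this is the one $\mathcal{O}(1)$ contribution, which is precisely why the conclusion is a uniform bound rather than an $\varepsilon$-smallness estimate. The nonlinear term obeys $\|\mathscr{N}[\psi_{q'+h},\psi_{q'}]\|_{**}\leq C\left\|\vartheta_{h}\right\|_{L^{\infty}(\mathbb{R}^{2}_{+})}\|\psi_{q'+h}-\psi_{q'}\|_{L^{\infty}}\leq C\varepsilon^{2}\left\|\vartheta_{h}\right\|_{L^{\infty}(\mathbb{R}^{2}_{+})}$ by \eqref{difference-quotient-quadratic-term-estimate} and \eqref{orthogonality-condition-error-improvement-statement}; and writing $\mathscr{G}_{h}=\mathscr{G}_{h}^{(1)}+\mathscr{G}_{h}^{(2)}$, the bounds \eqref{c-difference-quotient-error-estimate-3}--\eqref{c-difference-quotient-error-estimate-4} reduce its estimate to the compactly supported principal piece $\gamma\varepsilon\mathfrak{c}_{h}(y_{2}-q')(\cdots)^{\gamma-1}_{+}\chi_{\mathscr{I}}$, whence $\|\mathscr{G}_{h}\|_{**}\leq C(\varepsilon|\mathfrak{c}_{h}|+\varepsilon)\leq C\big(\varepsilon\left\|\vartheta_{h}\right\|_{L^{\infty}(\mathbb{R}^{2}_{+})}+1\big)$ by \eqref{c-difference-quotient-bound-statement}. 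Collecting these together with $|\delta_{h}|$ gives $\left\|\vartheta_{h}\right\|_{L^{\infty}(\mathbb{R}^{2}_{+})}\leq C+C\varepsilon\left\|\vartheta_{h}\right\|_{L^{\infty}(\mathbb{R}^{2}_{+})}$, and for $\varepsilon$ small the last term is absorbed, leaving $\left\|\vartheta_{h}\right\|_{L^{\infty}(\mathbb{R}^{2}_{+})}\leq C$ uniformly in $h$ (and in $q$ in the admissible range); $h_{0}$ is then chosen so that $q+\varepsilon h$ stays admissible, the support inclusion \eqref{support-of-terms-in-intersection-of-balls} holds, and the $h$-uniform estimates \eqref{difference-quotient-error-estimate}, \eqref{difference-quotient-quadratic-term-estimate}, \eqref{c-difference-quotient-error-estimate-3}--\eqref{c-difference-quotient-error-estimate-5}, \eqref{c-difference-quotient-bound-statement} are valid. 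I expect the main obstacle to be the bookkeeping for the approximate orthogonality: one must verify that $\delta_{h}$ and the $Z_{2}$-correction are genuinely of lower order --- which rests on $\|\psi_{q'}\|_{L^{\infty}}\leq C\varepsilon^{2}$ and on the smallness of $L[Z_{2}]$ --- so that nothing of size comparable to $\left\|\vartheta_{h}\right\|_{L^{\infty}}$ is reintroduced. An equivalent route is to run the normalisation-contradiction scheme of Lemma \ref{psi-a-priori-estimate-lemma} directly on \eqref{difference-quotient-elliptic-equation} after dividing by $\left\|\vartheta_{h}\right\|_{L^{\infty}}$, the same estimates showing the normalised right-hand side tends to $0$ along a sequence $\varepsilon_{n}\to0$, $h_{n}\to0$.
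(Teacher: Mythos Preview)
Your proposal is correct and takes a genuinely different route from the paper. The paper argues by contradiction: assuming $\|\vartheta_{h_n}\|_{L^\infty}\to\infty$ along some $h_n\to 0$ (with $\varepsilon$ fixed), it normalises, uses \eqref{c-difference-quotient-bound-statement} to show the rescaled right-hand side of \eqref{difference-quotient-elliptic-equation} converges to $\hat{\mathfrak{c}}(y_2-q')(f_\varepsilon')^{+}$ with $|\hat{\mathfrak{c}}|\leq C\varepsilon$, passes to a local-uniform limit $\hat{\vartheta}$ with $\|\hat{\vartheta}\|_{L^\infty}=1$ and approximate orthogonality \eqref{difference-quotient-uniform-bound-6}, and then invokes the machinery of Lemma~\ref{psi-a-priori-estimate-lemma} to force $\hat{\vartheta}\equiv 0$. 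You instead work directly: you recast \eqref{difference-quotient-elliptic-equation} as $L[\vartheta_h]=\widetilde R_h$ by absorbing $(f_\varepsilon')^{+}-V^{+}=\mathcal{O}(\varepsilon^2)$ into the right-hand side, correct for the failure of exact orthogonality by subtracting $\tfrac{\delta_h}{\int VZ_2^2}Z_2$, and then apply Lemma~\ref{main-a-priori-estimate-lemma} as a black box to close via $\|\vartheta_h\|_{L^\infty}\leq C+C\varepsilon\|\vartheta_h\|_{L^\infty}$. Your approach avoids any compactness/limit step and makes the dependence on the individual ingredients (\eqref{difference-quotient-error-estimate}, \eqref{difference-quotient-quadratic-term-estimate}, \eqref{c-difference-quotient-error-estimate-3}--\eqref{c-difference-quotient-error-estimate-4}, \eqref{c-difference-quotient-bound-statement}, \eqref{orthogonality-condition-error-improvement-statement}) more transparent; the paper's route stays closer in spirit to the proof of Lemma~\ref{psi-a-priori-estimate-lemma} and sidesteps the explicit $Z_2$-correction bookkeeping. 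You in fact note this alternative at the end of your proposal, and that is exactly what the paper does.
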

		\begin{proof}
			Suppose not, so that we can extract a sequence $h_{n}\to0$ such that $\left\|\vartheta_{h_{n}}\right\|_{L^{\infty}\left(\mathbb{R}^{2}_{+}\right)}\to\infty$ as $n\to\infty$. By the bound \eqref{c-difference-quotient-bound-statement}, and relabelling if necessary, we can assume
			\begin{align}
				\hat{\mathfrak{c}}\coloneqq\lim_{n\to\infty}\frac{\varepsilon\mathfrak{c}_{h_{n}}}{\left\|\vartheta_{h_{n}}\right\|_{L^{\infty}\left(\mathbb{R}^{2}_{+}\right)}}\nonumber
			\end{align}
			exists and has the bound
			\begin{align}
				\left|\hat{\mathfrak{c}}\right|\leq C\varepsilon.\label{difference-quotient-uniform-bound-2}
			\end{align}
			Next, \eqref{difference-quotient-error-estimate}--\eqref{c-difference-quotient-error-estimate-4} give that
			\begin{align}
				\frac{1}{\left\|\vartheta_{h_{n}}\right\|_{L^{\infty}\left(\mathbb{R}^{2}_{+}\right)}}&\left(\mathscr{E}_{h_{n}}-\mathscr{G}_{h_{n}}-\mathscr{N}\left[\vartheta_{h_{n}}\right]\right)\to \hat{\mathfrak{c}}\left(y_{2}-q'\right)\left(f_{\varepsilon}'\right)^{+},\ \ n\to\infty.\nonumber
			\end{align}
			Thus, after relabelling if necessary, we can assume
			\begin{align}
				\hat{\vartheta}_{h_{n}}\coloneqq \frac{\vartheta_{h_{n}}}{\left\|\vartheta_{h_{n}}\right\|_{L^{\infty}\left(\mathbb{R}^{2}_{+}\right)}}\to \hat{\vartheta},\nonumber
			\end{align}
			locally uniformly, some $\hat{\vartheta}$, as $n\to\infty$. Clearly $\|\hat{\vartheta}\|_{L^{\infty}\left(\mathbb{R}^{2}_{+}\right)}=1$. Moreover, it is even in $y_{1}$, and is $0$ on $\left\{y_{2}=0\right\}$. It solves the equation
			\begin{align}
				\Delta\hat{\vartheta}+\left(f_{\varepsilon}'\right)^{+}\hat{\vartheta}=\hat{\mathfrak{c}}\left(y_{2}-q'\right)\left(f_{\varepsilon}'\right)^{+}\nonumber
			\end{align}
			on the upper half plane, and from \eqref{difference-quotient-approximate-orthogonality-condition-1}--\eqref{difference-quotient-approximate-orthogonality-condition-2}, we get that
			\begin{align}
				\left|\int_{\mathbb{R}^{2}_{+}}VZ_{2}\hat{\vartheta}\right|\leq C\varepsilon^{3},\label{difference-quotient-uniform-bound-6}
			\end{align}
			using the bound on $\mathfrak{c}$ in \eqref{difference-quotient-uniform-bound-2}, and the bound on $\psi_{q'}$, \eqref{orthogonality-condition-error-improvement-statement}. Then, following the same strategy as the proof of Lemma \ref{psi-a-priori-estimate-lemma} with a minor adjustment for the fact we have the bound \eqref{difference-quotient-uniform-bound-6} instead of a true orthogonality condition, we obtain that $\hat{\vartheta}\equiv0$, a contradiction, as required.
		\end{proof}
		Now we show the existence of $\dell_{q'}\psi_{q'}$ and $\dell_{q'}c$.
		\begin{theorem}\label{existence-of-q-derivatives-lemma}
			Let $\left(q,c(q),\varepsilon\right)$ be as in Theorem \ref{c-q-relationship-lemma}, and be associated to the solution of \eqref{full-solution-1}, $\psi_{q'}$. Then for all $\varepsilon>0$ small enough, $\dell_{q'}\psi_{q'}$ and $\dell_{q'}c$ exist and have the bounds
			\begin{align}
				\left\|\dell_{q'}\psi_{q'}\right\|_{L^{\infty}\left(\mathbb{R}^{2}\right)}&\leq C,\label{existence-of-q-derivatives-statement-1}\\
				\left|\dell_{q'}c\right|&\leq C,\label{existence-of-q-derivatives-statement-2}
			\end{align}
			some constant $C>0$.
		\end{theorem}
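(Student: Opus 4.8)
The plan is to obtain $\partial_{q'}\psi_{q'}$ and $\partial_{q'}c$ as the limits, as $h\to 0$, of the difference quotients $\vartheta_h$ and $\mathfrak{c}_h$, starting from the uniform-in-$h$ bounds $\|\vartheta_h\|_{L^\infty(\mathbb{R}^2_+)}\le C$ and $|\varepsilon\mathfrak{c}_h|\le C$ supplied by the two preceding lemmas (the latter by feeding the former into \eqref{c-difference-quotient-bound-statement}). Fix any sequence $h_n\to 0$. By \eqref{support-of-terms-in-intersection-of-balls} every term on the right of \eqref{difference-quotient-elliptic-equation} is supported in a fixed compact set, and by \eqref{difference-quotient-error-estimate}, \eqref{difference-quotient-quadratic-term-estimate}, \eqref{c-difference-quotient-error-estimate-3}, \eqref{c-difference-quotient-error-estimate-4} together with these bounds the right side is bounded in $L^\infty$ uniformly in $n$; moreover $\mathscr{N}[\psi_{q'+h_n},\psi_{q'}]\to 0$ in $L^\infty$, since $\|\psi_{q'+h_n}-\psi_{q'}\|_{L^\infty}=|h_n|\,\|\vartheta_{h_n}\|_{L^\infty}\le C|h_n|$. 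Interior elliptic estimates and the Arzel\`a--Ascoli theorem give, along a subsequence, $\vartheta_{h_n}\to\vartheta_*$ in $C^1_{\mathrm{loc}}(\mathbb{R}^2_+)$; writing $\vartheta_{h_n}$ via the half-plane Green's function as in the proof of Lemma \ref{psi-a-priori-estimate-lemma}, with a source that is bounded and uniformly compactly supported, yields a decay estimate $|\vartheta_{h_n}(y)|\le C(1+|y|)^{-1}$, so the convergence is uniform on $\mathbb{R}^2_+$ and $\|\vartheta_*\|_{L^\infty(\mathbb{R}^2_+)}\le C$. Finally, integrating \eqref{difference-quotient-elliptic-equation} against $Z_2$ as in the proof of the first lemma of this section expresses $\varepsilon\mathfrak{c}_{h_n}$ through integrals of $\vartheta_{h_n}$ and of the now-convergent data, so $\varepsilon\mathfrak{c}_{h_n}$ converges as well, to a limit I call $\varepsilon\,\partial_{q'}c$.

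Passing to the limit in \eqref{difference-quotient-elliptic-equation} and \eqref{difference-quotient-approximate-orthogonality-condition-2}, the pair $(\vartheta_*,\partial_{q'}c)$ solves the \emph{linear} problem on $\mathbb{R}^2_+$
\begin{equation*}
\Delta\vartheta_*+\left(f_\varepsilon'\right)^{+}\vartheta_*=\mathcal{F}-\varepsilon\,(\partial_{q'}c)\,(y_2-q')\left(f_\varepsilon'\right)^{+},
\end{equation*}
with $\left(f_\varepsilon'\right)^{+}$ as in \eqref{derivative-of-nonlinearity-plus}, the last term being the limit of the leading part of $-\mathscr{G}_h^{(1)}$ identified in \eqref{c-difference-quotient-error-estimate-3}, and $\mathcal{F}$ the ($h$-independent) limit of the remaining terms; in addition $\vartheta_*$ is even in $y_1$, vanishes on $\{y_2=0\}$, decays at infinity, and $\big|\int_{\mathbb{R}^2_+}VZ_2\vartheta_*\big|\le C\varepsilon^2$, the last from \eqref{difference-quotient-approximate-orthogonality-condition-2} whose right side is $\psi_{q'}=\mathcal{O}(\varepsilon^2)$ tested against a bounded, compactly supported kernel. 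The decisive point is \emph{uniqueness}: two subsequential limits $(\vartheta_*^{(1)},\ell_1)$, $(\vartheta_*^{(2)},\ell_2)$ have a difference $(\delta\vartheta,\delta\ell)$ solving the homogeneous version (the datum $\mathcal{F}$ is common and the two $\mathcal{O}(\varepsilon^2)$ orthogonality defects differ by at most $C\varepsilon^2|\delta\ell|$); subtracting an $\mathcal{O}(\varepsilon^2|\delta\ell|)$ multiple of $Z_2$ to restore an exact orthogonality, replacing $\left(f_\varepsilon'\right)^{+}$ by the potential $V$ up to an $\mathcal{O}(\varepsilon^2)$ term, and invoking Lemma \ref{main-a-priori-estimate-lemma} gives $\|\delta\vartheta\|_{L^\infty}\le C\varepsilon|\delta\ell|$; on the other hand, testing the equation for $\delta\vartheta$ against $Z_2$ and using the nondegeneracy computation \eqref{c-difference-quotient-bound-6}--\eqref{c-difference-quotient-bound-7} gives $|\delta\ell|\le C\varepsilon\|\delta\vartheta\|_{L^\infty}$; composed, these force $\delta\vartheta\equiv 0$, hence $\delta\ell=0$, for $\varepsilon$ small. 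Thus the full limits exist: $\partial_{q'}\psi_{q'}:=\vartheta_*$ (extended oddly across $\{y_2=0\}$, so $\partial_{q'}\psi_{q'}\in L^\infty(\mathbb{R}^2)$) and $\partial_{q'}c$. Bound \eqref{existence-of-q-derivatives-statement-1} is the limit of $\|\vartheta_h\|_{L^\infty}\le C$, and \eqref{existence-of-q-derivatives-statement-2} follows by passing to the limit in the uniform bound on $\mathfrak{c}_h$ delivered by the $Z_2$-projection of \eqref{difference-quotient-elliptic-equation} once refined with the improved error estimate \eqref{orthogonality-condition-error-improvement-statement}.

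I expect the uniqueness argument to be the main obstacle. The limiting equation for $\vartheta_*$ is not one of the projected linear problems of Section \ref{projected-linear-problem-subsection}, because its right-hand side carries the unknown $\partial_{q'}c$ multiplied by $(y_2-q')\left(f_\varepsilon'\right)^{+}$, which is not a multiple of an approximate kernel element $VZ_i$; one must use that this particular forcing has a definite, order-$\varepsilon$ projection onto $Z_2$ — exactly the content of \eqref{c-difference-quotient-bound-6}--\eqref{c-difference-quotient-bound-7} — so that $\delta\ell$ is slaved to $\|\delta\vartheta\|_{L^\infty}$ while Lemma \ref{main-a-priori-estimate-lemma} slaves $\|\delta\vartheta\|_{L^\infty}$ to $\varepsilon|\delta\ell|$, and the composition of these two estimates closes the loop for small $\varepsilon$. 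A secondary technical point, handled throughout by a small $Z_2$-correction, is that $\psi_{q'}$, and with it $\vartheta_h$ and its limits, satisfies the $Z_2$-orthogonality only up to an error of size $\mathcal{O}(\varepsilon^2)$, which must be removed before the a priori estimates of Section \ref{projected-linear-problem-subsection} can be applied.
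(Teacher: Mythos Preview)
Your argument is correct and follows essentially the same route as the paper's: extract subsequential limits of $(\vartheta_{h_n},\varepsilon\mathfrak{c}_{h_n})$ by compactness, pass to the limit in \eqref{difference-quotient-elliptic-equation} and in \eqref{difference-quotient-approximate-orthogonality-condition-2}, and then prove that any two subsequential limits coincide. The paper is terser at the uniqueness step, simply invoking ``the same strategy of proof as in Lemma~\ref{psi-a-priori-estimate-lemma}'' together with the bound \eqref{existence-of-q-derivatives-5}; your explicit two-sided loop (Lemma~\ref{main-a-priori-estimate-lemma} to control $\|\delta\vartheta\|$ by $\varepsilon|\delta\ell|$, and the $Z_2$-projection to control $|\delta\ell|$ by $\|\delta\vartheta\|$) makes the mechanism transparent. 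One small correction: testing the difference equation against $Z_2$ only yields $|\delta\ell|\le C\|\delta\vartheta\|_{L^\infty}$, not $C\varepsilon\|\delta\vartheta\|_{L^\infty}$, because $\Delta Z_2+(f_\varepsilon')^{+}Z_2=O(\varepsilon)$ while the right-hand side already carries the factor $\varepsilon$; the composition still gives $\|\delta\vartheta\|\le C\varepsilon\|\delta\vartheta\|$ and hence $\delta\vartheta\equiv 0$, so the conclusion is unaffected.
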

		\begin{proof}
			We know that $\vartheta_{h}$ is bounded in $L^{\infty}$ uniformly in $h$. Moreover, using the fundamental solution to the Laplacian on $\mathbb{R}_{+}^{2}$ and that $\vartheta_{h}$ solves \eqref{difference-quotient-elliptic-equation}, we obtain that
			\begin{align}
				\left\|\nabla\vartheta_{h}\right\|_{L^{\infty}\left(\mathbb{R}^{2}_{+}\right)}\leq C,\nonumber
			\end{align}
			some $C>0$ independent of $h$. So take a sequence $h_{n}\to0$ as $n\to\infty$. We can extract a subsequence that converges locally uniformly to some $\vartheta_{1}\in L^{\infty}\left(\mathbb{R}^{2}\right)$ that is even in $y_{1}$ and $0$ on $\left\{y_{2}=0\right\}$. By relabelling if necessary, we have that $\varepsilon\mathfrak{c}_{h_{n}}$ also converges to some $\mathfrak{c}_{1}$ on this subsequence. Then $\left(\vartheta_{1},\mathfrak{c}_{1}\right)$ solve
			\begin{align}
				\Delta\vartheta_{1}+\left(f_{\varepsilon}'\right)^{+}\vartheta_{1}=\mathscr{E}+\mathfrak{c}_{1}\left(y_{2}-q'\right)\left(f_{\varepsilon}'\right)^{+},\label{existence-of-q-derivatives-2}
			\end{align}
			with $\mathscr{E}$ some error term. Now suppose that there is another subsequence that converges to $\left(\vartheta_{2},\mathfrak{c}_{2}\right)$. This pair then solve
			\begin{align}
				\Delta\vartheta_{2}+\left(f_{\varepsilon}'\right)^{+}\vartheta_{2}=\mathscr{E}+\mathfrak{c}_{2}\left(y_{2}-q'\right)\left(f_{\varepsilon}'\right)^{+}.\label{existence-of-q-derivatives-3}
			\end{align}
			Crucially, due to the fact that we can differentiate $\Gamma\left(y\pm q'e_{2}\right)$ with respect to $q'$, and because $\psi_{q'}$ and $c$ are continuous with respect to $q$, the error term $\mathscr{E}$ in both \eqref{existence-of-q-derivatives-2} and \eqref{existence-of-q-derivatives-3} are the same. Thus we have that
			\begin{align}
				\Delta\left(\vartheta_{1}-\vartheta_{2}\right)+\left(f_{\varepsilon}'\right)^{+}\left(\vartheta_{1}-\vartheta_{2}\right)=\left(\mathfrak{c}_{1}-\mathfrak{c}_{2}\right)\left(y_{2}-q'\right)\left(f_{\varepsilon}'\right)^{+}.\nonumber
			\end{align}
			Using \eqref{difference-quotient-approximate-orthogonality-condition-1}--\eqref{difference-quotient-approximate-orthogonality-condition-2}, we have
			\begin{align}
				\left|\int_{\mathbb{R}^{2}_{+}}VZ_{2}\left(\vartheta_{1}-\vartheta_{2}\right)\right|\leq C\left(\varepsilon\left\|\vartheta_{1}\right\|_{L^{\infty}\left(\mathbb{R}^{2}_{+}\right)}+\varepsilon\left\|\vartheta_{2}\right\|_{L^{\infty}\left(\mathbb{R}^{2}_{+}\right)}+1\right)\varepsilon^{2}.\label{existence-of-q-derivatives-5}
			\end{align}
			Then once again running the same strategy of proof as in Lemma \ref{psi-a-priori-estimate-lemma}, with the bound \eqref{existence-of-q-derivatives-5} instead of an orthogonality condition, we obtain that $\left(\vartheta_{1},\mathfrak{c}_{1}\right)=\left(\vartheta_{2},\mathfrak{c}_{2}\right)$, showing the existence of both $\dell_{q'}\psi_{q'}$ and $\dell_{q'}c$. Everything we did on the upper half plane can then be reflected to show existence on all of $\mathbb{R}^{2}$.
			
			\medskip
			Taking $h\to0$ in \eqref{difference-quotient-elliptic-equation}, and once again applying the techniques used in the proofs of Lemmas \ref{estimate-on-coefficient-of-projection-lemma} and \ref{psi-a-priori-estimate-lemma}, the bounds \eqref{difference-quotient-error-estimate}--\eqref{c-difference-quotient-error-estimate-4} imply the bounds \eqref{existence-of-q-derivatives-statement-1}--\eqref{existence-of-q-derivatives-statement-2}, as required.
		\end{proof}
		As in Lemma \ref{orthogonality-condition-error-improvement-lemma}, we now improve the bounds \eqref{existence-of-q-derivatives-statement-1}--\eqref{existence-of-q-derivatives-statement-2}.
		\begin{lemma}\label{bounds-on-q-derivatives-lemma}
			Let $\left(q,c(q),\varepsilon\right)$ be as in Theorem \ref{c-q-relationship-lemma}, and be associated to the solution of \eqref{full-solution-1}, $\psi_{q'}$. Then for all $\varepsilon>0$ small enough, $\dell_{q'}\psi_{q'}$ and $\dell_{q'}c$ have the bounds
			\begin{align}
				\left\|\dell_{q'}\psi_{q'}\right\|_{L^{\infty}\left(\mathbb{R}^{2}\right)}&\leq C\varepsilon^{2},\nonumber\\
				\left|\dell_{q'}c\right|&\leq C\varepsilon,\nonumber
			\end{align}
			some constant $C>0$. In addition, $g$ defined in Theorem \ref{c-q-relationship-lemma} has the bound
			\begin{align}
				\left|\dell_{q'}g\right|\leq C\varepsilon^{3},\nonumber
			\end{align}
			some constant $C>0$.
		\end{lemma}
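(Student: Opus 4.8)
I would prove Lemma~\ref{bounds-on-q-derivatives-lemma} as the $q'$--derivative analogue of Lemma~\ref{orthogonality-condition-error-improvement-lemma}: first upgrade, from $\bigO(1)$ to $\bigO(\varepsilon^{2})$, the error term in the equation satisfied by $\vartheta:=\dell_{q'}\psi_{q'}$, then re-run the linear theory of Section~\ref{projected-linear-problem-subsection}, and close with a modal computation. The starting point is the limiting equation \eqref{existence-of-q-derivatives-2} obtained in the proof of Theorem~\ref{existence-of-q-derivatives-lemma}, which for $\vartheta$ and $\dell_{q'}c$ reads
\begin{align*}
\Delta\vartheta+(f_{\varepsilon}')^{+}\vartheta=\mathscr E+\varepsilon\,\dell_{q'}c\,\big(y_{2}-q'\big)(f_{\varepsilon}')^{+}\qquad\text{on }\R^{2}_{+},
\end{align*}
where $\mathscr E$ is what remains on the right of \eqref{difference-quotient-elliptic-equation} after passing to the limit $h\to0$ and separating off the term proportional to $\dell_{q'}c$; the quadratic term disappears in the limit since $\|\mathscr N[\psi_{q'+h},\psi_{q'}]\|_{L^{\infty}}\le C|h|\,\|\vartheta_{h}\|_{L^{\infty}}^{2}\to0$. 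Throughout I keep in hand the crude bounds $\|\vartheta\|_{L^{\infty}}\le C$, $|\dell_{q'}c|\le C$ of Theorem~\ref{existence-of-q-derivatives-lemma} and the bound $g=\bigO(\varepsilon^{2})$ of Lemma~\ref{orthogonality-condition-error-improvement-lemma}.

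\emph{Step 1: $\|\mathscr E\|_{L^{\infty}}\le C\varepsilon^{2}$.} This is the derivative--level counterpart of the estimate $E_{1}=\bigO(\varepsilon^{2})$. On the support ball $B_{\rho'}(q'e_{2})$ I would use $\dell_{q'}\big(\Gamma(y-q'e_{2})-\Gamma(y+q'e_{2})\big)=-Z_{2}$ and the fact that, by the choice \eqref{Omega-definition-1} of $\Omega$, the argument of $(f_{\varepsilon}')^{+}$ equals $\Gamma(y-q'e_{2})$ up to $\bigO(\varepsilon^{2})$ there, to see that the $\bigO(\varepsilon)$ parts of $\lim_{h\to0}\mathscr E_{h}$ and of $\lim_{h\to0}\mathscr G_{h}$ cancel in the displayed equation: indeed $\dell_{2}\Gamma(y+q'e_{2})=-\tfrac{m\varepsilon}{2q}+\bigO(\varepsilon^{2})$ on $B_{\rho'}(q'e_{2})$, and $\tfrac{m}{h}\log\!\big(1+\tfrac{\varepsilon h}{q}\big)-c(q+\varepsilon h)\varepsilon\to\tfrac{m\varepsilon}{q}-c\varepsilon=\tfrac{m\varepsilon}{2q}-\varepsilon g$ with $g=\bigO(\varepsilon^{2})$, so both leading contributions equal $-\tfrac{m\varepsilon}{2q}\gamma\big(\Gamma(y-q'e_{2})\big)_{+}^{\gamma-1}$ up to $\bigO(\varepsilon^{2})$ — exactly as the $\bigO(\varepsilon)$ part of $E_{1}$ cancelled in Lemma~\ref{orthogonality-condition-error-improvement-lemma}. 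Since $\mathscr E$ has fixed compact support, $\|\mathscr E\|_{L^{\infty}}\le C\varepsilon^{2}$ also gives $\|\mathscr E\|_{**}\le C\varepsilon^{2}$.

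\emph{Step 2: a priori estimate, hence the first two bounds.} The displayed equation is of the form handled in Section~\ref{projected-linear-problem-subsection}, with $\varepsilon\,\dell_{q'}c$ in the role of the Lagrange coefficient and with the approximate orthogonality condition obtained by letting $h\to0$ in \eqref{difference-quotient-approximate-orthogonality-condition-2},
\begin{align*}
\Big|\int_{\R^{2}_{+}}VZ_{2}\,\vartheta\Big|=\Big|\int_{\R^{2}_{+}}\dell_{q'}(VZ_{2})\,\psi_{q'}\Big|\le C\|\psi_{q'}\|_{L^{\infty}}\le C\varepsilon^{2},
\end{align*}
using \eqref{orthogonality-condition-error-improvement-statement} and the boundedness and fixed compact support of $\dell_{q'}(VZ_{2})$. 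Re-running the contradiction/comparison arguments of Lemmas~\ref{estimate-on-coefficient-of-projection-lemma}--\ref{psi-a-priori-estimate-lemma} (exactly as in Theorem~\ref{existence-of-q-derivatives-lemma}, now with the $\bigO(\varepsilon^{2})$ error of Step~1) yields $\|\vartheta\|_{L^{\infty}(\R^{2}_{+})}+\varepsilon|\dell_{q'}c|\le C\big(\|\mathscr E\|_{**}+\varepsilon^{2}\big)\le C\varepsilon^{2}$, and reflecting to $\R^{2}$ gives $\|\dell_{q'}\psi_{q'}\|_{L^{\infty}(\R^{2})}\le C\varepsilon^{2}$ and $|\dell_{q'}c|\le C\varepsilon$.

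\emph{Step 3: $|\dell_{q'}g|\le C\varepsilon^{3}$.} From the proof of Lemma~\ref{orthogonality-condition-error-improvement-lemma} the condition $b=0$ reads $0=2M\varepsilon\,g(q')+\mathcal R(q')$, where $\mathcal R$ is the sum of $\int_{\R^{2}}Z_{2}L[\psi_{q'}]$, $\int_{\R^{2}}Z_{2}N[\psi_{q'}]$ and the $\bigO(\varepsilon^{4})$ tails of $\int_{\R^{2}}EZ_{2}$; since $b\equiv0$, differentiating in $q'$ gives $2M\varepsilon\,\dell_{q'}g=-\dell_{q'}\mathcal R$. I would then estimate $\dell_{q'}\mathcal R=\bigO(\varepsilon^{4})$ using the Step~2 bounds $\|\vartheta\|_{L^{\infty}},\|\psi_{q'}\|_{L^{\infty}}\le C\varepsilon^{2}$, $|\dell_{q'}c|\le C\varepsilon$, $\dell_{q'}R=\bigO(\varepsilon^{2})$, together with the Fourier--mode structure of $\psi_{q'}$ (Remark~\ref{mode-remark}) and of $\vartheta$ (whose equation has, by Step~1, a right--hand side carrying only modes $\ge1$ in $y-q'e_{2}$ at order $\varepsilon^{2}$, hence so does $\vartheta$): since $\Delta Z_{2}+VZ_{2}=-\tfrac{m\varepsilon}{2q}\gamma\big(\Gamma(y-q'e_{2})\big)_{+}^{\gamma-1}+\bigO(\varepsilon^{2})$ is, at order $\varepsilon$, radial in $y-q'e_{2}$, the pairings $\int(\Delta Z_{2}+VZ_{2})\vartheta$ and $\int\dell_{q'}(\Delta Z_{2}+VZ_{2})\psi_{q'}$ lose the factor $\varepsilon$ a naive bound would keep, giving $|\dell_{q'}g|\le C\varepsilon^{3}$. (Equivalently, testing the displayed equation against $Z_{2}$ and using $\int(y_{2}-q')(f_{\varepsilon}')^{+}Z_{2}=-c_{0}+\bigO(\varepsilon^{2})$, $c_{0}>0$, from \eqref{c-difference-quotient-bound-7}, one extracts $\varepsilon\,\dell_{q'}c=-\tfrac{m\varepsilon^{2}}{2q^{2}}+\bigO(\varepsilon^{4})=\varepsilon\,\dell_{q'}\!\big(\tfrac{m}{2q}\big)+\bigO(\varepsilon^{4})$.) The hard part is precisely this last step: Step~2 alone yields only $\dell_{q'}g=\bigO(\varepsilon^{2})$, and the gain of one further power of $\varepsilon$ rests entirely on the vanishing of certain angular integrals, so one must control the modal decomposition — not just the sup norms — of both $\psi_{q'}$ and $\dell_{q'}\psi_{q'}$ and check that it is preserved under $\dell_{q'}$; by contrast, Step~1 is a faithful transcription of the cancellation in Lemma~\ref{orthogonality-condition-error-improvement-lemma}, and Step~2 is literally the a priori estimate already carried out in Theorem~\ref{existence-of-q-derivatives-lemma}.
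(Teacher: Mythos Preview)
Your proof is correct and reaches the same conclusions, but the organization differs from the paper's in a useful way. The paper proceeds by an explicit \emph{bootstrap}: Step~I uses only the crude bound $|\partial_{q'}c|\le C$ from Theorem~\ref{existence-of-q-derivatives-lemma} to obtain $\|\partial_{q'}\psi_{q'}\|_{L^\infty}\le C\varepsilon$; Step~II differentiates the identity \eqref{c-q-relationship-2} (with $b=0$) in $q'$ and, after an integration by parts that turns $\int\partial_{q'}(E_1Z_{2,1})$ into $-\int\Gamma_+^\gamma\bigl(2\partial_2^2\Gamma(y+q'e_2)+\varepsilon\partial_{q'}c\bigr)+\bigO(\varepsilon^3)$, extracts $|\partial_{q'}c|\le C\varepsilon$; Step~III feeds this back into Step~I to get $\|\partial_{q'}\psi_{q'}\|_{L^\infty}\le C\varepsilon^2$ and then $|\partial_{q'}g|\le C\varepsilon^2$; Step~IV repeats once more, using that the $\varepsilon^3$ term of $2\partial_2^2\Gamma(y+q'e_2)$ in the expansion \eqref{bounds-on-q-derivatives-34} is mode~1 in $y-q'e_2$ and hence integrates to zero against the radial $\Gamma_+^\gamma$, to reach $|\partial_{q'}g|\le C\varepsilon^3$.

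Your Steps~1--2 short-circuit the first three of these: by separating off $\varepsilon\,\partial_{q'}c\,(y_2-q')(f_\varepsilon')^+$ and treating it as a Lagrange-multiplier term in the linear theory (exactly the device already used for $\mathfrak c_h$ in the lemmas preceding Theorem~\ref{existence-of-q-derivatives-lemma}), the remaining source $\mathscr E$ is $\bigO(\varepsilon^2)$ immediately --- since, as you observe, $\partial_2\Gamma(y+q'e_2)+m/(2q')-\varepsilon g=\bigO(\varepsilon^2)$ on the support --- and the a~priori estimate then gives both $\|\partial_{q'}\psi_{q'}\|_{L^\infty}\le C\varepsilon^2$ and $|\partial_{q'}c|\le C\varepsilon$ in one pass. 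This is a genuine streamlining. For Step~3 the two approaches converge: the paper's Step~IV is precisely your modal cancellation, and your ``equivalently'' (testing the $\vartheta$-equation against $Z_2$ and using $\int(y_2-q')(f_\varepsilon')^+Z_2=-c_0+\bigO(\varepsilon^2)$) is a compact repackaging of the paper's differentiated-$b=0$ computation.

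One small correction: your sentence ``Step~2 alone yields only $\partial_{q'}g=\bigO(\varepsilon^2)$'' should read $\bigO(\varepsilon)$. The a~priori estimate gives $|\partial_{q'}c|\le C\varepsilon$, and since $\partial_{q'}(m/2q)=-m\varepsilon/(2q^2)$ is itself $\bigO(\varepsilon)$, one only concludes $\partial_{q'}g=\bigO(\varepsilon)$ directly from Step~2; reaching $\bigO(\varepsilon^2)$ already requires one pass through the differentiated $b=0$ identity (the paper's Step~III), and $\bigO(\varepsilon^3)$ the further mode-1 cancellation (Step~IV).
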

		\begin{proof}
			We proceed in three steps.\\
			
			\noindent \textbf{Step I: Improvement of the bound on} $\left\|\dell_{q'}\psi_{q'}\right\|_{L^{\infty}\left(\mathbb{R}^{2}\right)}$.\\
			
			\noindent Differentiating \eqref{2d-euler-travelling-semilinear-elliptic-equation-epsilon-new} with respect to $q'$ and rearranging, we see that $\dell_{q'}\psi_{q'}$ solves
			\begin{align}
				\Delta\left(\dell_{q'}\psi_{q'}\right)+\left(f_{\varepsilon}'\right)^{+}\dell_{q'}\psi_{q'}=\mathscr{E}_{1}+\mathscr{E}_{2}\label{bounds-on-q-derivatives-1}
			\end{align}
			on $\mathbb{R}^{2}_{+}$, where
			\begin{align}
				\mathscr{E}_{1}&=\gamma\dell_{2}\Gamma\left(y-q'e_{2}\right)\left(\Gamma\left(y-q'e_{2}\right)-\Gamma\left(y+q'e_{2}\right)+\psi_{q'}\left(y\right)-c\varepsilon y_{2}-\left|\log{\varepsilon}\right|\Omega\right)^{\gamma-1}_{+}\chi_{B_{\frac{s}{\varepsilon}}\left(q'e_{2}\right)}\nonumber\\
				&-\gamma\dell_{2}\Gamma\left(y-q'e_{2}\right)\left(\Gamma\left(y-q'e_{2}\right)\right)^{\gamma-1}_{+},\nonumber
			\end{align}
			and
			\begin{align}
				\mathscr{E}_{2}=&\gamma\left(\dell_{2}\Gamma\left(y+q'e_{2}\right)+\dell_{q'}\left(c\right)\varepsilon y_{2}+\left|\log{\varepsilon}\right|\dell_{q'}\Omega\right)\cdot\nonumber\\
				&\cdot\left(\Gamma\left(y-q'e_{2}\right)-\Gamma\left(y+q'e_{2}\right)+\psi_{q'}\left(y\right)-c\varepsilon y_{2}-\left|\log{\varepsilon}\right|\Omega\right)^{\gamma-1}_{+}\chi_{B_{\frac{s}{\varepsilon}}\left(q'e_{2}\right)}.\nonumber
			\end{align}
			Differentiating the orthogonality condition \eqref{solution-Z2-orthogonality-condition} with respect to $q'$, and using Lemmas \ref{orthogonality-condition-error-improvement-lemma} and \ref{existence-of-q-derivatives-lemma} for bounds on $\psi_{q'}$ and $\dell_{q'}c$, we have
			\begin{align}
				\left|\int_{\mathbb{R}^{2}_{+}}VZ_{2}\dell_{q'}\psi_{q'}\right|\leq C\varepsilon^{2}.\label{bounds-on-q-derivatives-4}
			\end{align}
			Also due to Lemma \ref{orthogonality-condition-error-improvement-lemma}, we can see that
			\begin{align}
				\left\|\mathscr{E}_{1}\right\|_{L^{\infty}\left(\mathbb{R}^{2}_{+}\right)}\leq C\varepsilon^{2}.\nonumber
			\end{align}
			For $\mathscr{E}_{2}$, we have by \eqref{Omega-definition-1},
			\begin{align}
				\dell_{2}\Gamma\left(y+q'e_{2}\right)+\dell_{q'}\left(c\right)\varepsilon y_{2}+\left|\log{\varepsilon}\right|\dell_{q'}\Omega=\dell_{2}\Gamma\left(y+q'e_{2}\right)+\frac{m}{q'}+\dell_{q'}\left(c\right)\varepsilon y_{2}-\dell_{q'}\left(cq\right).\label{bounds-on-q-derivatives-7}
			\end{align}
			Also, using \eqref{c-q-relationship}, 
			\begin{align}
				\dell_{q'}\left(c\right)\varepsilon y_{2}-\dell_{q'}\left(cq\right)=\varepsilon\dell_{q'}\left(c\right)\left(y_{2}-q'\right)-\frac{m}{2q'}-\varepsilon g(q').\label{bounds-on-q-derivatives-8}
			\end{align}
			Putting \eqref{bounds-on-q-derivatives-7} and \eqref{bounds-on-q-derivatives-8} together, we have
			\begin{align}
				\dell_{2}\Gamma\left(y+q'e_{2}\right)+\dell_{q'}\left(c\right)\varepsilon y_{2}+\left|\log{\varepsilon}\right|\dell_{q'}\Omega=\left(\dell_{2}\Gamma\left(y+q'e_{2}\right)+\frac{m}{2q'}\right)+\varepsilon\dell_{q'}\left(c\right)\left(y_{2}-q'\right)-\varepsilon g(q').\label{bounds-on-q-derivatives-9}
			\end{align}
			We note that Lemma \ref{orthogonality-condition-error-improvement-lemma} implies $\varepsilon g(q')$ is $\bigO{\left(\varepsilon^{3}\right)}$ on $B_{\rho'}\left(q'e_{2}\right)$. The first term on the right hand side of \eqref{bounds-on-q-derivatives-9} is given by
			\begin{align}
				\dell_{2}\Gamma\left(y+q'e_{2}\right)+\frac{m}{2q'}&=-\frac{m\left(y_{2}+q'\right)}{\left|y+q'e_{2}\right|^{2}}+\frac{m}{2q'}\nonumber\\
				&=-\frac{m}{4\left(q'\right)^{2}}\left(\frac{y_{2}-q'}{1+\frac{y_{2}-q'}{q'}+\frac{\left|y-q'e_{2}\right|^{2}}{4\left(q'\right)^{2}}}+\frac{2q'}{1+\frac{y_{2}-q'}{q'}+\frac{\left|y-q'e_{2}\right|^{2}}{4\left(q'\right)^{2}}}\right)+\frac{m}{2q'}.\nonumber
			\end{align}
			Therefore
			\begin{align}
				&\left(\dell_{2}\Gamma\left(y+q'e_{2}\right)+\frac{m}{2q'}\right)+\varepsilon\dell_{q'}\left(c\right)\left(y_{2}-q'\right)=-\frac{m}{4\left(q'\right)^{2}}\left(y_{2}-q'\right)\left(1-\frac{y_{2}-q'}{q'}+\bigO{\left(\varepsilon^{2}\right)}\right)\nonumber\\
				&-\frac{m}{2q'}\left(1-\frac{\left(y_{2}-q'\right)}{q'}-\frac{\left|y-q'e_{2}\right|^{2}}{4\left(q'\right)^{2}}+\frac
				{\left(y_{2}-q'\right)^{2}}{\left(q'\right)^{2}}+\bigO{\left(\varepsilon^{3}\right)}\right)+\frac{m}{2q'}+\varepsilon\dell_{q'}\left(c\right)\left(y_{2}-q'\right).\nonumber
			\end{align}
			We can therefore say that
			\begin{align}
				&\dell_{2}\Gamma\left(y+q'e_{2}\right)+\dell_{q'}\left(c\right)\varepsilon y_{2}+\left|\log{\varepsilon}\right|\dell_{q'}\Omega=-\varepsilon g(q')\nonumber\\
				&+\left(-\frac{m}{4\left(q'\right)^{2}}+\frac{m}{2\left(q'\right)^{2}}+\varepsilon\dell_{q'}c\right)\left(y_{2}-q'\right)+\frac{m}{8\left(q'\right)^{3}}\left(y_{1}^{2}-\left(y_{2}-q'\right)^{2}\right)+\bigO{\left(\varepsilon^{4}\right)}.\label{bounds-on-q-derivatives-10c}
			\end{align}
			From the definition of $c$ in \eqref{c-q-relationship}, and Theorem \ref{existence-of-q-derivatives-lemma}, we in fact obtain
			\begin{align*}
				-\frac{m}{4\left(q'\right)^{2}}+\frac{m}{2\left(q'\right)^{2}}+\varepsilon\dell_{q'}c&=-\frac{m}{4\left(q'\right)^{2}}+\frac{m}{2\left(q'\right)^{2}}-\frac{m}{2\left(q'\right)^{2}}+\varepsilon\dell_{q'}g\\
				&=-\frac{m}{4\left(q'\right)^{2}}+\varepsilon\dell_{q'}g,
			\end{align*}
			so that \eqref{bounds-on-q-derivatives-10c} becomes
			\begin{align}
				&\dell_{2}\Gamma\left(y+q'e_{2}\right)+\dell_{q'}\left(c\right)\varepsilon y_{2}+\left|\log{\varepsilon}\right|\dell_{q'}\Omega=-\varepsilon g(q')+\left(-\frac{m}{4\left(q'\right)^{2}}+\varepsilon\dell_{q'}g\right)\left(y_{2}-q'\right)\nonumber\\
				&+\frac{m}{8\left(q'\right)^{3}}\left(y_{1}^{2}-\left(y_{2}-q'\right)^{2}\right)+\bigO{\left(\varepsilon^{4}\right)}.\label{bounds-on-q-derivatives-10d}
			\end{align}
			Thus, Lemma \ref{existence-of-q-derivatives-lemma}. which implies the boundedness of $\dell_{q'}g$, along with \eqref{bounds-on-q-derivatives-10d}, implies that
			\begin{align}
				\left\|\mathscr{E}_{2}\right\|_{L^{\infty}\left(\mathbb{R}^{2}_{+}\right)}\leq C\varepsilon.\label{bounds-on-q-derivatives-11}
			\end{align}
			Thus, once again adapting the methods of the proof of Lemma \ref{psi-a-priori-estimate-lemma}, using \eqref{bounds-on-q-derivatives-4} instead of a true orthogonality condition, we obtain
			\begin{align}
				\left\|\dell_{q'}\psi_{q'}\right\|_{L^{\infty}\left(\mathbb{R}^{2}\right)}\leq C\varepsilon,\label{bounds-on-q-derivatives-12}
			\end{align}
			where the bound over all of $\mathbb{R}^{2}$ once again comes from using the oddness in $y_{2}$.\\
			
			\noindent \textbf{Step II: Improvement of the bound on} $\dell_{q'}c$.\\
			
			\noindent We have the identity \eqref{c-q-relationship-2} with $b=0$, which we can then differentiate with respect to $q'$ to obtain
			\begin{align}
				0=&\int_{\mathbb{R}^{2}}\left(\dell_{q'}\psi_{q'}\right)L\left[Z_{2}\right]+\int_{\mathbb{R}^{2}}\psi_{q'}\dell_{q'}\left(L\left[Z_{2}\right]\right)-\dell_{q'}\left(\int_{\mathbb{R}^{2}}EZ_{2}\right)\nonumber\\
				&+\int_{\mathbb{R}^{2}}\left(\dell_{q'}Z_{2}\right)N\left[\psi_{q'}\right]+\int_{\mathbb{R}^{2}}Z_{2}\dell_{q'}\left(N\left[\psi_{q'}\right]\right).\label{bounds-on-q-derivatives-13}
			\end{align}
			Now by explicit calculation, we know that
			\begin{align}
				\left\|\dell_{q'}Z_{2}\right\|_{L^{\infty}\left(\mathbb{R}^{2}\right)}&\leq C,\label{bounds-on-q-derivatives-14}\\
				\left\|\dell_{q'}\left(L\left[Z_{2}\right]\right)\right\|_{L^{\infty}\left(\mathbb{R}^{2}\right)}&\leq C\varepsilon.\nonumber
			\end{align}
			For the quadratic term, on $\mathbb{R}^{2}_{+}$, we have
			\begin{align}
				\dell_{q'}\left(N\left[\psi_{q'}\right]\right)&=\dell_{q'}\left(\Gamma\left(y-q'e_{2}\right)-\Gamma\left(y+q'e_{2}\right)-c\varepsilon y_{2}-\left|\log{\varepsilon}\right|\Omega\right)\tilde{N}_{1}\left[\psi_{q'}\right]+\tilde{N}_{2}\left[\psi_{q'},\dell_{q'}\psi_{q'}\right],\nonumber
			\end{align}
			with
			\begin{align}
				\tilde{N}_{1}[\psi_{q'}]&=\gamma\left(\Gamma\left(y-q'e_{2}\right)-\Gamma\left(y+q'e_{2}\right)+\psi_{q'}\left(y\right)-c\varepsilon y_{2}-\left|\log{\varepsilon}\right|\Omega\right)^{\gamma-1}_{+}\chi_{B_{\frac{s}{\varepsilon}}\left(q'e_{2}\right)}\nonumber\\
				&-\gamma\left(\Gamma\left(y-q'e_{2}\right)-\Gamma\left(y+q'e_{2}\right)-c\varepsilon y_{2}-\left|\log{\varepsilon}\right|\Omega\right)^{\gamma-1}_{+}\chi_{B_{\frac{s}{\varepsilon}}\left(q'e_{2}\right)}\nonumber\\
				&-\gamma(\gamma-1)\left(\left(\Gamma\left(y-q'e_{2}\right)-\Gamma\left(y+q'e_{2}\right)-c\varepsilon y_{2}-\left|\log{\varepsilon}\right|\Omega\right)^{\gamma-2}_{+}\psi_{q'}(y)\right)\chi_{B_{\frac{s}{\varepsilon}}\left(q'e_{2}\right)},\nonumber
			\end{align}
			and
			\begin{align}
				&\tilde{N}_{2}\left[\psi_{q'},\dell_{q'}\psi_{q'}\right]\nonumber\\
				&=\gamma\left(\Gamma\left(y-q'e_{2}\right)-\Gamma\left(y+q'e_{2}\right)+\psi_{q'}\left(y\right)-c\varepsilon y_{2}-\left|\log{\varepsilon}\right|\Omega\right)^{\gamma-1}_{+}\chi_{B_{\frac{s}{\varepsilon}}\left(q'e_{2}\right)}\dell_{q'}\psi_{q'}(y)\nonumber\\
				&-\gamma\left(\Gamma\left(y-q'e_{2}\right)-\Gamma\left(y+q'e_{2}\right)-c\varepsilon y_{2}-\left|\log{\varepsilon}\right|\Omega\right)^{\gamma-1}_{+}\chi_{B_{\frac{s}{\varepsilon}}\left(q'e_{2}\right)}\dell_{q'}\psi_{q'}(y).\nonumber
			\end{align}
			Thus,
			\begin{align}
				\left\|\dell_{q'}\left(N\left[\psi_{q'}\right]\right)\right\|_{L^{\infty}\left(\mathbb{R}^{2}\right)}&\leq C\left(\left\|\psi_{q'}\right\|_{L^{\infty}\left(\mathbb{R}^{2}\right)}^{2}+\left\|\dell_{q'}\psi_{q'}\right\|_{L^{\infty}\left(\mathbb{R}^{2}\right)}\left\|\psi_{q'}\right\|_{L^{\infty}\left(\mathbb{R}^{2}\right)}\right)\nonumber\\
				&\leq C\varepsilon^{3},\label{bounds-on-q-derivatives-19}
			\end{align}
			using Lemma \ref{orthogonality-condition-error-improvement-lemma}, \eqref{bounds-on-q-derivatives-12}, and oddness in $y_{2}$.
			
			\medskip
			Thus \eqref{bounds-on-q-derivatives-14}--\eqref{bounds-on-q-derivatives-19} and Lemma \ref{orthogonality-condition-error-improvement-lemma} give
			\begin{align}
				\left|\int_{\mathbb{R}^{2}}\left[\left(\dell_{q'}\psi_{q'}\right)L\left[Z_{2}\right]+\psi_{q'}\dell_{q'}\left(L\left[Z_{2}\right]\right)+\left(\dell_{q'}Z_{2}\right)N\left[\psi_{q'}\right]+Z_{2}\dell_{q'}\left(N\left[\psi_{q'}\right]\right)\right]\right|\leq C\varepsilon^{2}.\label{bounds-on-q-derivatives-20}
			\end{align}
			Next we note
			\begin{align}
				\dell_{q'}\left(\int_{\mathbb{R}^{2}}EZ_{2}\right)=2\dell_{q'}\left(\int_{\mathbb{R}^{2}}E_{1}Z_{2,1}+\int_{\mathbb{R}^{2}}E_{1}Z_{2,2}\right),\nonumber
			\end{align}
			with $E_{1}$ and $Z_{2,i}$ being defined in \eqref{error-1-definition} and \eqref{dq-approximate-kernel-element}. Explicitly calculating the integral of $E_{1}Z_{2,2}$, differentiating with respect to $q'$ and applying the same modal considerations as in \eqref{orthogonality-condition-error-improvement-4} gives
			\begin{align}
				\left|\int_{\mathbb{R}^{2}}\dell_{q'}\left(E_{1}Z_{2,2}\right)\right|\leq C\varepsilon^{4}.\label{bounds-on-q-derivatives-22}
			\end{align}
			For $\dell_{q'}\left(E_{1}Z_{2,1}\right)$, we have
			\begin{align}
				&\dell_{q'}\left(E_{1}Z_{2,1}\right)\nonumber\\
				&=\dell_{q'}\left(\left(\Gamma(y-q'e_{2})\right)^{\gamma}_{+}-\left(\Gamma\left(y-q'e_{2}\right)-\Gamma\left(y+q'e_{2}\right)-c\varepsilon y_{2}-\left|\log{\varepsilon}\right|\Omega\right)^{\gamma}_{+}\chi_{B_{\frac{s}{\varepsilon}}\left(q'e_{2}\right)}\right)\dell_{2}\Gamma\left(y-q'e_{2}\right)\nonumber\\
				&+\left(\left(\Gamma(y-q'e_{2})\right)^{\gamma}_{+}-\left(\Gamma\left(y-q'e_{2}\right)-\Gamma\left(y+q'e_{2}\right)-c\varepsilon y_{2}-\left|\log{\varepsilon}\right|\Omega\right)^{\gamma}_{+}\chi_{B_{\frac{s}{\varepsilon}}\left(q'e_{2}\right)}\right)\left(-\dell_{2}^{2}\Gamma\left(y-q'e_{2}\right)\right).\nonumber
			\end{align}
			Then we have that
			\begin{align}
				\dell_{q'}\left(E_{1}Z_{2,1}\right)=\mathcal{E}_{1}+\mathcal{E}_{2}+\mathcal{E}_{3},\nonumber
			\end{align}
			where
			\begin{align}
				\mathcal{E}_{1}&=\gamma\dell_{2}\Gamma\left(y-q'e_{2}\right)\left(\dell_{2}\Gamma\left(y+q'e_{2}\right)+\dell_{q'}\left(c\right)\varepsilon y_{2}+\left|\log{\varepsilon}\right|\dell_{q'}\Omega\right)\cdot\nonumber\\
				&\cdot\left(\Gamma\left(y-q'e_{2}\right)-\Gamma\left(y+q'e_{2}\right)-c\varepsilon y_{2}-\left|\log{\varepsilon}\right|\Omega\right)^{\gamma-1}_{+}\chi_{B_{\frac{s}{\varepsilon}}\left(q'e_{2}\right)},\nonumber
			\end{align}
			\begin{align}
				\mathcal{E}_{2}&=-\gamma\left(\left(\Gamma(y-q'e_{2})\right)^{\gamma-1}_{+}-\left(\Gamma\left(y-q'e_{2}\right)-\Gamma\left(y+q'e_{2}\right)-c\varepsilon y_{2}-\left|\log{\varepsilon}\right|\Omega\right)^{\gamma-1}_{+}\chi_{B_{\frac{s}{\varepsilon}}\left(q'e_{2}\right)}\right)\cdot\nonumber\\
				&\cdot\left(\dell_{2}\Gamma\left(y-q'e_{2}\right)\right)^{2},\nonumber
			\end{align}
			\begin{align}
				\mathcal{E}_{3}&=-\left(\left(\Gamma(y-q'e_{2})\right)^{\gamma}_{+}-\left(\Gamma\left(y-q'e_{2}\right)-\Gamma\left(y+q'e_{2}\right)-c\varepsilon y_{2}-\left|\log{\varepsilon}\right|\Omega\right)^{\gamma}_{+}\chi_{B_{\frac{s}{\varepsilon}}\left(q'e_{2}\right)}\right)\cdot\nonumber\\
				&\cdot\dell_{2}^{2}\Gamma\left(y-q'e_{2}\right).\nonumber
			\end{align}
			Integrating $\mathcal{E}_{3}$, and using integration by parts, we get
			\begin{align}
				\int_{\mathbb{R}^{2}}\mathcal{E}_{3}=&\int_{\mathbb{R}^{2}}\gamma\dell_{2}\Gamma\left(y-q'e_{2}\right)\left(\dell_{2}\Gamma\left(y+q'e_{2}\right)+c\varepsilon\right)\cdot\nonumber\\
				&\cdot\left(\Gamma\left(y-q'e_{2}\right)-\Gamma\left(y+q'e_{2}\right)-c\varepsilon y_{2}-\left|\log{\varepsilon}\right|\Omega\right)^{\gamma-1}_{+}\chi_{B_{\frac{s}{\varepsilon}}\left(q'e_{2}\right)}\ dy-\int_{\mathbb{R}^{2}}\mathcal{E}_{2}.\nonumber
			\end{align}
			Thus
			\begin{align}
				\int_{\mathbb{R}^{2}}\left(\mathcal{E}_{1}+\mathcal{E}_{2}+\mathcal{E}_{3}\right)=&\int_{\mathbb{R}^{2}}\gamma\dell_{2}\Gamma\left(y-q'e_{2}\right)\left(2\dell_{2}\Gamma\left(y+q'e_{2}\right)+c\varepsilon+\dell_{q'}\left(c\right)\varepsilon y_{2}+\left|\log{\varepsilon}\right|\dell_{q'}\Omega\right)\cdot\nonumber\\
				&\cdot\left(\Gamma\left(y-q'e_{2}\right)-\Gamma\left(y+q'e_{2}\right)-c\varepsilon y_{2}-\left|\log{\varepsilon}\right|\Omega\right)^{\gamma-1}_{+}\chi_{B_{\frac{s}{\varepsilon}}\left(q'e_{2}\right)}\ dy.\nonumber
			\end{align}
			Now, as in Lemma \ref{orthogonality-condition-error-improvement-lemma}, we have
			\begin{align}
				&\gamma\left(\Gamma\left(y-q'e_{2}\right)-\Gamma\left(y+q'e_{2}\right)-c\varepsilon y_{2}-\left|\log{\varepsilon}\right|\Omega\right)^{\gamma-1}_{+}\chi_{B_{\frac{s}{\varepsilon}}\left(q'e_{2}\right)}\nonumber\\
				&=\gamma\left(\Gamma(y-q'e_{2})\right)^{\gamma-1}_{+}+\bigO{\left(\varepsilon^{2}\right)}.\label{bounds-on-q-derivatives-22a}
			\end{align}
			Moreover, similarly to \eqref{bounds-on-q-derivatives-7}--\eqref{bounds-on-q-derivatives-10c}, we have that
			\begin{align}
				2\dell_{2}\Gamma\left(y+q'e_{2}\right)+c\varepsilon+\dell_{q'}\left(c\right)\varepsilon y_{2}+\left|\log{\varepsilon}\right|\dell_{q'}\Omega=\varepsilon\dell_{q'}\left(g\right)\left(y_{2}-q'\right)+\bigO{\left(\varepsilon^{3}\right)}=\bigO{\left(\varepsilon\right)}.\nonumber
			\end{align}
			on the support of the integrand. Therefore,
			\begin{align}
				&\int_{\mathbb{R}^{2}}\left(\mathcal{E}_{1}+\mathcal{E}_{2}+\mathcal{E}_{3}\right)\nonumber\\
				&=\int_{\mathbb{R}^{2}}\dell_{2}\left(\left(\Gamma(y-q'e_{2})\right)^{\gamma}_{+}\right)\left(2\dell_{2}\Gamma\left(y+q'e_{2}\right)+c\varepsilon+\dell_{q'}\left(c\right)\varepsilon y_{2}+\left|\log{\varepsilon}\right|\dell_{q'}\Omega\right)\ dy+\bigO{\left(\varepsilon^{3}\right)}.\nonumber
			\end{align}
			Integrating by parts, we finally obtain
			\begin{align}
				\int_{\mathbb{R}^{2}}\dell_{q'}\left(E_{1}Z_{2,1}\right)=-&\int_{\mathbb{R}^{2}}\left(\Gamma(y-q'e_{2})\right)^{\gamma}_{+}\left(2\dell_{2}^{2}\Gamma\left(y+q'e_{2}\right)+\dell_{q'}\left(c\right)\varepsilon\right)\ dy+\bigO{\left(\varepsilon^{3}\right)}.\label{bounds-on-q-derivatives-33}
			\end{align}
			Recalling the explicit form of $\Gamma\left(y+q'e_{2}\right)$ on $B_{\rho}\left(q'e_{2}\right)$, we get
			\begin{align}
				2\dell_{2}^{2}\Gamma\left(y+q'e_{2}\right)+\dell_{q'}\left(c\right)\varepsilon=-\frac{m}{2\left(q'\right)^{2}}+\varepsilon\dell_{q'}c+\frac{2m}{\left(q'\right)^{3}}\left(y_{2}-q'\right)+\bigO{\left(\varepsilon^{4}\right)}.\label{bounds-on-q-derivatives-34}
			\end{align}
			Then, combining \eqref{bounds-on-q-derivatives-20}, \eqref{bounds-on-q-derivatives-22}, \eqref{bounds-on-q-derivatives-33}, and \eqref{bounds-on-q-derivatives-34}, the statement \eqref{bounds-on-q-derivatives-13} becomes
			\begin{align}
				2M\varepsilon\left(-\frac{1}{\varepsilon}\frac{m}{2\left(q'\right)^{2}}+\dell_{q'}c+\bigO{\left(\varepsilon\right)}\right)=0,\nonumber
			\end{align}
			where we recall $M$ is an absolute nonzero constant defined in \eqref{vortex-mass-definition}. Since we know that
			\begin{align}
				\frac{1}{\varepsilon}\frac{m}{2\left(q'\right)^{2}}=\frac{m\varepsilon}{2q^{2}}=\bigO{\left(\varepsilon\right)},\nonumber
			\end{align}
			we have that
			\begin{align}
				\left|\dell_{q'}c\right|\leq C\varepsilon,\label{bounds-on-q-derivatives-37}
			\end{align}
			with an analogous bound holding for $\dell_{q'}g$, for $g$ defined in \eqref{c-q-relationship}.\\
			
			\noindent \textbf{Step III: Repeating Steps I and II}.\\
			
			\noindent With the improved bound on $\dell_{q'}c$ in \eqref{bounds-on-q-derivatives-37}, we can now repeat the calculations \eqref{bounds-on-q-derivatives-1}--\eqref{bounds-on-q-derivatives-9} in Step I, and instead of obtaining the bound for $\mathscr{E}_{2}$ in \eqref{bounds-on-q-derivatives-11}, we get that
			\begin{align}
				\left\|\mathscr{E}_{2}\right\|_{L^{\infty}\left(\mathbb{R}^{2}_{+}\right)}\leq C\varepsilon^{2}.\nonumber
			\end{align}
			This implies that
			\begin{align}
				\left\|\dell_{q'}\psi_{q'}\right\|_{L^{\infty}\left(\mathbb{R}^{2}\right)}\leq C\varepsilon^{2},\nonumber
			\end{align}
			as claimed for $\dell_{q'}\psi_{q'}$. This improved bound then implies that 
			\begin{align}
				&\left|\int_{\mathbb{R}^{2}}\left[\left(\dell_{q'}\psi_{q'}\right)L\left[Z_{2}\right]+\psi_{q'}\dell_{q'}\left(L\left[Z_{2}\right]\right)+\left(\dell_{q'}Z_{2}\right)N\left[\psi_{q'}\right]+Z_{2}\dell_{q'}\left(N\left[\psi_{q'}\right]\right)\right]\right|\leq C\varepsilon^{4},\nonumber\\
				&\left|\int_{\mathbb{R}^{2}}\dell_{q'}\left(E_{1}Z_{2,2}\right)\right|\leq C\varepsilon^{4},\label{bounds-on-q-derivatives-38}
			\end{align}
			which lets us run Step II again to obtain that 
			\begin{align}
				2M\varepsilon\left(-\frac{1}{\varepsilon}\frac{m}{2\left(q'\right)^{2}}+\dell_{q'}c+\bigO{\left(\varepsilon^{2}\right)}\right)=0.\nonumber
			\end{align}
			Since differentiating \eqref{c-q-relationship} gives us
			\begin{align}
				\dell_{q'}c=-\frac{1}{\varepsilon}\frac{m}{2\left(q'\right)^{2}}+\dell_{q'}g(q'),\nonumber
			\end{align}
			we have
			\begin{align}
				\left|\dell_{q'}c\right|\leq C\varepsilon,\ \ \left|\dell_{q'}g\right|\leq C\varepsilon^{2}.\label{dell-q-g-epsilon-squared-bound}
			\end{align}
			\\
			\noindent \textbf{Step IV: Repeating Steps II and III}.\\
			
			\noindent With the improved bounds on $\dell_{q'}g$ in \eqref{dell-q-g-epsilon-squared-bound}, we can go back to \eqref{bounds-on-q-derivatives-22a}--\eqref{bounds-on-q-derivatives-33} and instead obtain  
			\begin{align*}
				&\gamma\left(\Gamma\left(y-q'e_{2}\right)-\Gamma\left(y+q'e_{2}\right)-c\varepsilon y_{2}-\left|\log{\varepsilon}\right|\Omega\right)^{\gamma-1}_{+}\chi_{B_{\frac{s}{\varepsilon}}\left(q'e_{2}\right)}\nonumber\\
				&=\gamma\left(\Gamma(y-q'e_{2})\right)^{\gamma-1}_{+}+\bigO{\left(\varepsilon^{2}\right)},
			\end{align*}
			\begin{align*}
				2\dell_{2}\Gamma\left(y+q'e_{2}\right)+c\varepsilon+\dell_{q'}\left(c\right)\varepsilon y_{2}+\left|\log{\varepsilon}\right|\dell_{q'}\Omega=\varepsilon\dell_{q'}\left(g\right)\left(y_{2}-q'\right)+\bigO{\left(\varepsilon^{3}\right)}=\bigO{\left(\varepsilon^{3}\right)},
			\end{align*}
			\begin{align*}
				\int_{\mathbb{R}^{2}}\dell_{q'}\left(E_{1}Z_{2,1}\right)=-&\int_{\mathbb{R}^{2}}\left(\Gamma(y-q'e_{2})\right)^{\gamma}_{+}\left(2\dell_{2}^{2}\Gamma\left(y+q'e_{2}\right)+\dell_{q'}\left(c\right)\varepsilon\right)\ dy+\bigO{\left(\varepsilon^{5}\right)}.
			\end{align*}
			We now also note that in the expansion \eqref{bounds-on-q-derivatives-34}, the order $\varepsilon^{3}$ is mode $1$, so when integrated against the radial function $\left(\Gamma(y-q'e_{2})\right)^{\gamma}_{+}$, will give $0$.\\
			
			Combining all this with the estimates in \eqref{bounds-on-q-derivatives-38}, the statement \eqref{bounds-on-q-derivatives-13} becomes 
			\begin{align*}
				2M\varepsilon\left(-\frac{1}{\varepsilon}\frac{m}{2\left(q'\right)^{2}}+\dell_{q'}c+\bigO{\left(\varepsilon^{3}\right)}\right)=0,
			\end{align*}
			which gives
			\begin{align*}
				\left|\dell_{q'}g\right|\leq C\varepsilon^{3},
			\end{align*}
			as required.
		\end{proof}
		The argument laid out in Section \ref{q-linearisation-properties-section} can be repeated multiple times for large enough $\gamma$. In particular for $\gamma>3$, we have
		\begin{align}
			\left\|\dell_{q'}^{2}\psi_{q'}\right\|_{L^{\infty}\left(\mathbb{R}^{2}\right)}+\left|\dell_{q'}^{2}c\right|+\varepsilon^{-2}\left|\dell_{q'}^{2}g\right|\leq C\varepsilon^{2}.\label{bounds-on-two-q-derivatives}
		\end{align}
		Then Theorem \ref{c-q-relationship-lemma}, Lemma \ref{orthogonality-condition-error-improvement-lemma}, Theorem \ref{existence-of-q-derivatives-lemma}, Lemma \ref{bounds-on-q-derivatives-lemma}, and \eqref{bounds-on-two-q-derivatives} give Theorem \ref{vortexpair-theorem}.
		
		\bigskip\noindent
		{\bf Acknowledgements:}

		J. Dávila has been supported by a Royal Society Wolfson Fellowship, UK.
		M. del Pino has been supported by the Royal Society Research Professorship grant RP-R1-180114 and by the ERC/UKRI Horizon Europe grant ASYMEVOL, EP/Z000394/1.
		M. Musso has been supported by EPSRC research grant EP/T008458/1.
		S. Parmeshwar has been supported by EPSRC research grants EP/T008458/1 and EP/V000586/1.
		
		We thank the referee for a careful reading of the manuscript and valuable suggestions.

		\bibliography{main} 
		\bibliographystyle{siam}
	\end{document}